\documentclass[reqno,11pt]{amsart}
\usepackage{amsmath,amsfonts,amssymb,amsxtra,latexsym,amscd,enumerate,amsthm,verbatim}

\usepackage{graphicx}

\usepackage[margin=1.40in]{geometry}
\setlength{\textwidth}{15.8cm}
\setlength{\oddsidemargin}{0.2cm}
\setlength{\evensidemargin}{0.2cm}
\numberwithin{equation}{section}

\newcommand{\R}{\mathbb{R}}

\newcommand{\T}{\mathbb{T}}
\newcommand{\C}{\mathbb{C}}
\newcommand{\Z}{\mathbb{Z}}

\numberwithin{equation}{section} 

\newtheorem{theorem}{Theorem}[section]
\newtheorem{lemma}[theorem]{Lemma}
\newtheorem{proposition}[theorem]{Proposition}

\newtheorem{remark}[theorem]{Remark}

\newtheorem{assumption}[theorem]{Assumption}

\begin{document}

\title{Uniform linear inviscid damping and enhanced dissipation near monotonic shear flows in high Reynolds number regime (I): the whole space case}

\author{Hao Jia}
\address{University of Minnesota}
\email{jia@umn.edu}


\thanks{ Supported in part by NSF grant DMS-1945179.}

\begin{abstract}
{\small}

We study the dynamics of the two dimensional Navier Stokes equations linearized around a strictly monotonic shear flow on $\T\times\R$. The main task is to understand the associated Rayleigh and Orr-Sommerfeld equations, under the natural assumption that the linearized operator around the monotonic shear flow in the inviscid case has no discrete eigenvalues. We obtain precise control of solutions to the Orr-Sommerfeld equations in the high Reynolds number limit, using the perspective that the nonlocal term can be viewed as a compact perturbation with respect to 
the main part that includes the small diffusion term. As a corollary,  we give a detailed description of the linearized flow in Gevrey spaces (linear inviscid damping) that are uniform with respect to the viscosity, and enhanced dissipation type decay estimates. The key difficulty is to accurately capture the behavior of the solution to Orr-Sommerfeld equations in the critical layer. In this paper we consider the case of shear flows on $\T\times\R$. The case of bounded channels poses significant additional difficulties, due to the presence of boundary layers, and will be addressed elsewhere.
\end{abstract}

\maketitle

\setcounter{tocdepth}{1}
\pagestyle{plain}

\tableofcontents

\section{Introduction and main results}\label{sec:imr}
The study of stability problems in mathematical analysis of fluid dynamics has a long and distinguished history, dating back to the work of Kelvin \cite{Kelvin}, Orr \cite{Orr} and Rayleigh \cite{Ray} among many others, and continuing to the present day. Hydrodynamical stability problems can be considered in both two and three dimensions. In this paper we work with two dimensional flows. An important early theme was to understand when physically relevant steady flows may become unstable by studying discrete eigenvalues of the corresponding linearized operator, the presence of which often leads to exponential instability.  When the linearized operator is spectrally stable, it is natural to ask if we can prove nonlinear stability, which is the case for many parabolic equations.\footnote{We should mention that there are many important nonlinear stability results for fluid equations in Lyapunov or orbital sense based on variational argument, see Arnold \cite{Arnold} and recent works \cite{Choi, GallaySverak} for more references. Our focus is dynamic stability, or asymptotic stability, which requires a more precise understanding on the evolution of solutions.} The answer to this question turns out to be complicated due to many reasons. The main issue is that very often the most interesting physical settings involve high Reynolds number (or equivalently the viscosity has to be taken as very small), and the Navier Stokes equations degenerate to Euler equations where continuous spectrum plays a dominant role.  It is therefore important to understand mathematically the property of the linearized operator in the high Reynolds number regime, including the limiting inviscid problem. 

For the Euler equations, there are significant recent progresses on the asymptotic stability of shear flows and vortices, assuming spectral stability, see for example \cite{Grenier,dongyi,Zillinger1,Zillinger2, JiaL,JiaG,Bed2,JiaVortex,Xiaoliu,Stepin} for linear results. The main mechanism of stabilization is the so called ``inviscid damping", which refers to the transfer of energy of vorticity to higher and higher frequencies leading to decay of the stream and velocity functions, as $t\to\infty$.  Extending the linearized stability analysis for inviscid fluid equations to the full nonlinear setting is a challenging problem, and the only available results are on spectrally stable monotonic shear flows, see \cite{BM,NaZh,IOJI,IJacta}, and on point vortices \cite{IOJI2}. We refer also to the recent review article \cite{IJICM} for a more in-depth discussion of recent developments of both linear and nonlinear inviscid damping. 

 When the viscosity $\nu>0$ is small but nonzero, there is another important physical phenomenon, called enhanced dissipation, which helps to stabilize the flow. Roughly speaking, the background flow mixes the vorticity field which makes the viscous effect more powerful in averaging the vorticity function, leading to faster decay of the vorticity than when viscous effect acts alone. The enhanced dissipation can be used to establish improved nonlinear asymptotic stability results for perturbations of the order $O(\nu^\gamma)$ for a suitable $\gamma>0$. The determination of the smallest $\gamma$ for which nonlinear stability still holds is an important problem, and is an active research area, aiming to address the ``transition threshold conjecture".  We refer the reader to \cite{Helffer,CLWZ,CWZ,CEW,Gallay,Gallay2,Gallagher,LWZ20,MZ22,MZ20,Zhiwu3,Wang,Dongyi3,Dongyi4} and references therein for important recent works on enhanced dissipation and transition threshold problems, and section 6.11 of the recent book \cite{BeVi} for an excellent survey.

In view of the results on linear inviscid damping for the Euler equation, it is natural to expect that even for Navier Stokes equations with small viscosity we should have the same inviscid damping results with explicit rates of decay in time of the stream functions and velocity fields. Surprisingly, results in this direction are very few, as pointed out in \cite{BeVi}. The only results addressing precise uniform inviscid damping, as far as we know, are \cite{CLWZ,MZ22,BH20, Bed6} for the Couette flow with full nonlinear analysis or precise linear results, and \cite{Grenier} for the spectrally restricted stream function for ``mixing layer" type shear flows (but a description of the full solution without spectral restrictions is not available).

The main reason for this gap is that the Navier Stokes equations in high Reynolds number regime is a singular perturbation of the Euler equations, and the corresponding analysis for establishing precise inviscid damping is significantly more complicated. As a consequence, the problem remains largely open for more general shear flows than the Couette flow.   

To bridge this gap, we take a first step and study the linear asymptotic stability of monotonic shear flows $(b(y),0)$ on $\T\times\R$, and obtain precise inviscid damping estimates for the linearized flow that are uniform with respect to viscosity, together with enhanced dissipation estimates. Strictly speaking, $(b(y),0)$ is not a steady state for the Navier Stokes equations, and becomes steady state only with a small external force $f=(-\nu b''(y),0)$. In our setting, we consider the viscosity $\nu$ to be very small. The effect of diffusion on the background shear flow is negligible, at least up to the diffusion time scale $T\ll \frac{1}{\nu}$. After the diffusion time, due to the enhanced dissipation effect, the flow is essentially dominated by a heat evolution, at least heuristically. Therefore our analysis below still captures the main dynamics of the Navier Stokes equations near shear flows, even when we do not add the external forcing.  We hope the methods introduced here can be applied to establish uniform inviscid damping in high Reynolds number regime for more complicated and physically relevant flows, such as the Poiseuille flow in a periodic channel and monotone vortices in $\R^2$. 

Our main assumptions on the background flow $(b(y),0)$ are the following.
\begin{assumption}[Main assumption on the shear flow $(b(y),0)$]\label{MaAs}
We assume that the shear flow $(b(y),0)$ satisfies the following conditions:
\begin{itemize}
\item For some $\sigma_0\in(0,1)$, we have
\begin{equation}\label{asp1}
b'(y)\in[\sigma_0,1/\sigma_0]\,\,{\rm for\,\,}y\in\R,\quad {\rm supp}\,b''\subseteq[-1/\sigma_0,1/\sigma_0], \quad\sup_{\xi\in\R}\Big[e^{\sigma_0\langle\xi\rangle^{1/2}}|\widehat{\,\,b''}(\xi)|\Big]\leq 1/\sigma_0;
\end{equation}

\item The linearized operator $L_k: L^2(\R)\to L_{\rm loc}^2(\R)$ with $k\in\Z\backslash\{0\}$ defined for $g\in L^2(\R)$ as
\begin{equation}\label{asp2}
L_kg(y):=b(y)g(y)-b''(y)\varphi, \quad{\rm where}\,\,(-k^2+\partial_y^2)\varphi(y)=g(y),\,\,y\in\R,
\end{equation}
has purely continuous spectrum $\R$. 

\end{itemize}
\end{assumption}

\subsection{Main equations}
Assume that $\nu\in(0,1)$. The main linearized equation around the shear flow $(b(y),0)$ we shall consider is 
\begin{equation}\label{int1}
\left\{\begin{array}{rl}
\partial_t\omega-\nu\Delta\omega+b(y)\partial_x\omega-b''(y)\partial_x\psi=0,&\\
\Delta\psi=\omega,&
\end{array}\right.
\end{equation}
for $(x,y,t)\in\T\times\R\times[0,\infty)$. Taking the Fourier transform in $x$, we obtain for each $k\in\Z$ that
\begin{equation}\label{int2}
\left\{\begin{array}{rl}
\partial_t\omega_k+\nu(k^2-\partial_y^2)\omega_k+ikb(y)\omega_k-ikb''(y)\psi_k=0,&\\
(-k^2+\partial_y^2)\psi_k=\omega_k,&
\end{array}\right.
\end{equation}
for $(y,t)\in\R\times[0,\infty)$.
 For $k\in\Z\backslash\{0\}$, define the operator $L_{k,\nu}: H^2(\R)\to L_{\rm loc}^2(\R)$ as follows. For $g\in H^2(\R)$ and $y\in\R$,
\begin{equation}\label{int5}
L_{k,\nu}g(y):=(\nu/k)\partial_y^2g-ib(y)g+ib''(y)\varphi, \quad{\rm with}\,\,(-k^2+\partial_y^2)\varphi=g.
\end{equation}
We can rewrite equation \eqref{int2} more abstractly as
\begin{equation}\label{int3}
\partial_t\omega_k^\ast=kL_{k,\nu}\omega_k^\ast, \quad{\rm for}\,\,t\ge0,
\end{equation}
where we have set
\begin{equation}\label{int4}
\omega_k^\ast(t,y)=e^{\nu k^2t}\omega_k(t,y).
\end{equation}
By spectral theory, we have the following representation formula for $y\in\R$ and $t>0$,
\begin{equation}\label{int6}
\begin{split}
\omega_k^\ast(t,y)&=\frac{1}{2\pi i}\int_{i\R} e^{\mu tk}\big[(\mu-L_{k,\nu})^{-1}\omega_{0k}\big](y)\,d\mu\\
&=\frac{1}{2\pi }\int_{\R} e^{i\lambda tk}\big[(i\lambda-L_{k,\nu})^{-1}\omega_{0k}\big](y)\,d\lambda\\
&=-\frac{1}{2\pi }\int_{\R} e^{-ib(y_0) tk}\big[(ib(y_0)+L_{k,\nu})^{-1}\omega_{0k}\big](y)b'(y_0)\,dy_0.
\end{split}
\end{equation}
The formula \eqref{int6} holds for $\omega_{0k}\in C_0^\infty(\R)$, and can be derived using methods from the spectral theory of sectorial operators, with slight modifications. We shall give a more detailed discussion in Proposition \ref{rep1}. We remark that since we will obtain quantitative bounds on the vorticity and stream functions that are independent of the support of $\omega_{0k}$, our main conclusions do not require the compact support assumption needed to establish the formula \eqref{int6}, by a standard approximation argument.
Define for $y,y_0\in\R$,
\begin{equation}\label{int7}
\omega_{k,\nu}(y,y_0)=\big[(ib(y_0)+L_{k,\nu})^{-1}\omega_{0k}\big](y).
\end{equation}
It follows from \eqref{int7} that 
\begin{equation}\label{int8}
ib(y_0)\omega_{k,\nu}(y,y_0)+L_{k,\nu}\omega_{k,\nu}(y,y_0)=\omega_{0k}(y), \quad{\rm for}\,\,y,y_0\in\R.
\end{equation}
Therefore $\omega_{k,\nu}(y,y_0)$ satisfies the equation for $k\in\Z\backslash\{0\}$, $y,y_0\in\R$,
\begin{equation}\label{int9}
\left\{\begin{array}{rl}
\frac{\nu}{k}\partial_y^2\omega_{k,\nu}(y,y_0)+i(b(y_0)-b(y))\omega_{k,\nu}(y,y_0)+ib''(y)\psi_{k,\nu}(y,y_0)=\omega_{0k}(y),&\\
(-k^2+\partial_y^2)\psi_{k,\nu}(y,y_0)=\omega_{k,\nu}(y,y_0).&
\end{array}\right.
\end{equation}
To obtain sharp Gevrey estimates for the profiles of the vorticity and stream function, it is important to work with the variables $v, w\in\R$, defined as 
\begin{equation}\label{int10}
v=b(y), \quad w=b(y_0), \quad B^\ast(v)=b'(y),\quad{\rm for}\,\,y,y_0\in\R.
\end{equation}
We define with the change of variables \eqref{int10} for $y,y_0\in\R$,
\begin{equation}\label{int11}
\Pi_{k,\nu}(v,w):=\psi_{k,\nu}(y,y_0), \quad \Omega_{k,\nu}(v,w):=\omega_{k,\nu}(y,y_0),\quad F_{0k}(v):=\omega_{0k}(y),\quad f_k(t,v):=\omega_k(t,y).
\end{equation}
It follows from \eqref{int9}-\eqref{int11} that $\Pi_{k,\nu}$ and $\Omega_{k,\nu}$ satisfy for $v,w\in\R$, 
\begin{equation}\label{int12}
\begin{split}
\frac{\nu}{k}(B^\ast(v))^2\partial_v^2\Omega_{k,\nu}(v,w)+\frac{\nu}{k} B^\ast(v)\partial_vB^\ast(v)\partial_v\Omega_{k,\nu}(v,w)-i(v-w)\Omega_{k,\nu}(v,w)&\\
\qquad+iB^\ast(v)\partial_vB^\ast(v)\Pi_{k,\nu}(v,w)=F_{0k}(v),&\\
-\Big[k^2-(B^\ast(v))^2\partial_v^2-B^\ast(v)\partial_vB^\ast(v)\partial_v\Big]\Pi_{k,\nu}(v,w)=\Omega_{k,\nu}(v,w).&
\end{split}
\end{equation}
We define the ``profile" for the spectral density function $\Pi_{k,\nu}(v,w)$ with $v,w\in\R$ (see also \cite{JiaVortex} for a related definition) as
\begin{equation}\label{int13}
\Theta_{k,\nu}(v,w):=\Pi_{k,\nu}(v+w,w).
\end{equation}
Using the formula \eqref{int6}, the identity \eqref{int4} and the change of variable \eqref{int10}-\eqref{int11}, we obtain the representation formula
\begin{equation}\label{int13.1}
f_k(t,v)=-\frac{1}{2\pi}e^{-\nu k^2t}\int_{\R}e^{-ikwt}\Omega_{k,\nu}(v,w)\,dw,\quad{\rm for}\,\,v\in\R, t>0.
\end{equation}

We summarize our calculations in the following proposition.
\begin{proposition}\label{intP1}
Assume that $\nu\in(0,1/10)$ and $k\in\Z\backslash\{0\}$. Suppose that $\omega_k(t,y)$ satisfying the regularity condition $\omega_k(t,y)\in C^\infty((0,\infty)\times\R)$ and $\omega_k(t,\cdot)\in C([0,\infty), L^2(\R))$ is the solution to the system of equations 
\begin{equation}\label{intP2}
\left\{\begin{array}{rl}
\partial_t\omega_k(t,y)+\nu(k^2-\partial_y^2)\omega_k(t,y)+ikb(y)\omega_k(t,y)-ikb''(y)\psi_k(t,y)&=0,\\
(-k^2+\partial_y^2)\psi_k(t,y)&=\omega_k(t,y),
\end{array}\right.
\end{equation}
for $(y,t)\in\R\times[0,\infty)$, with initial data $\omega_k(0,y)=\omega_{0k}(y)\in C_0^\infty(\R)$. Define the operator $L_{k,\nu}: H^2(\R)\to L_{\rm loc}^2(\R)$ as the following 
\begin{equation}\label{intP3}
L_{k,\nu}g(y)=(\nu/k)\partial_y^2g-ib(y)g-i\frac{b''(y)}{|k|}\int_\R e^{-|k||y-z|}g(z)\,dz, \quad{\rm for\,\,any\,\,}g\in H^2(\R). 
\end{equation}
 Set for $y,y_0\in\R$
\begin{equation}\label{intP4}
\omega_{k,\nu}(y,y_0)=\big[(ib(y_0)+L_{k,\nu})^{-1}\omega_{0k}\big](y).
\end{equation}
Then $\omega_{k,\nu}(y,y_0)$ satisfies the equation for $k\in\Z\backslash\{0\}$, $y,y_0\in\R$,
\begin{equation}\label{intP5}
\left\{\begin{array}{rl}
\frac{\nu}{k}\partial_y^2\omega_{k,\nu}(y,y_0)+i(b(y_0)-b(y))\omega_{k,\nu}(y,y_0)+ib''(y)\psi_{k,\nu}(y,y_0)=\omega_{0k}(y),&\\
(-k^2+\partial_y^2)\psi_{k,\nu}(y,y_0)=\omega_{k,\nu}(y,y_0).&
\end{array}\right.
\end{equation}
We have the representation formula for $y\in\R, t>0$,
\begin{equation}\label{intP6}
\omega_k(t,y)=-\frac{1}{2\pi }e^{-\nu k^2t}\int_{\R} e^{-ikb(y_0)t}\omega_{k,\nu}(y,y_0)b'(y_0)\,dy_0.
\end{equation}
Define the change of variables for $y,y_0\in\R$ and $t\ge0$,
\begin{equation}\label{intP7}
\begin{split}
&v=b(y), \quad w=b(y_0), \quad B^\ast(v)=b'(y),\quad F_{0k}(v)=\omega_{0k}(y),\quad \Pi_{k,\nu}(v,w)=\psi_k(y,y_0), \\
& \Omega_{k,\nu}(v,w)=\omega_{k,\nu}(y,y_0),\,\, \Theta_{k,\nu}(v,w)=\Pi_{k,\nu}(v-w,w),\,\, f_k(t,v)=\omega_k(t,y),\,\, \phi_k(t,v)=\psi_k(t,y).
\end{split}
\end{equation}
Then $\Pi_{k,\nu}$ and $\Omega_{k,\nu}$ satisfies for $v,w\in\R$, 
\begin{equation}\label{intP8}
\begin{split}
\frac{\nu}{k}(B^\ast(v))^2\partial_v^2\Omega_{k,\nu}(v,w)+\frac{\nu}{k} B^\ast(v)\partial_vB^\ast(v)\partial_v\Omega_{k,\nu}(v,w)-i(v-w)\Omega_{k,\nu}(v,w)&\\
\qquad+iB^\ast(v)\partial_vB^\ast(v)\Pi_{k,\nu}(v,w)=F_{0k}(v),&\\
-\Big[k^2-(B^\ast(v))^2\partial_v^2-B^\ast(v)\partial_vB^\ast(v)\partial_v\Big]\Pi_{k,\nu}(v,w)=\Omega_{k,\nu}(v,w).&
\end{split}
\end{equation}
Moreover, we have the representation formula for $v\in\R,t>0$,
\begin{equation}\label{intP13.1}
f_k(t,v)=-\frac{1}{2\pi}e^{-\nu k^2t}\int_{\R}e^{-ikwt}\Omega_{k,\nu}(v,w)\,dw,
\end{equation}
and
\begin{equation}\label{intP13.2}
\phi_k(t,v)=-\frac{1}{2\pi}e^{-\nu k^2t}\int_{\R}e^{-ikwt}\Theta_{k,\nu}(v-w,w)\,dw.
\end{equation}

\end{proposition}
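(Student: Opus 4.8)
The statement collects several bookkeeping facts, and the plan is to verify them in the order listed; the only substantive input is the spectral representation \eqref{int6}, whose derivation is the content of Proposition \ref{rep1} and will be assumed here.

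\emph{Kernel form of $L_{k,\nu}$ and equations \eqref{intP5}--\eqref{intP6}.} Since $b'\ge\sigma_0>0$, the map $y\mapsto b(y)$ is a smooth diffeomorphism of $\R$, and the Green's function of the one-dimensional operator $-k^2+\partial_y^2$ on $\R$ is an explicit exponential kernel, so the function $\varphi$ solving $(-k^2+\partial_y^2)\varphi=g$ in \eqref{int5} is the convolution of $g$ against $e^{-|k||y-z|}$ (up to the explicit normalization constant). Substituting this into \eqref{int5} shows the operator in \eqref{int5} coincides with $L_{k,\nu}$ in \eqref{intP3}; in particular $L_{k,\nu}=(\nu/k)\partial_y^2-ib(y)+K_k$ with $K_k$ a fixed, $y_0$-independent smoothing operator whose coefficient $b''$ is compactly supported. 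Granting from Proposition \ref{rep1} that $L_{k,\nu}$ is sectorial, that $ib(y_0)$ lies in its resolvent set for every $y_0\in\R$, and that the inverse-Laplace contour along $i\R$ in \eqref{int6} converges for $\omega_{0k}\in C_0^\infty$, equation \eqref{intP5} then follows by applying $ib(y_0)+L_{k,\nu}$ to the definition \eqref{intP4}: writing out $L_{k,\nu}$ via \eqref{int5} and using $(-k^2+\partial_y^2)\psi_{k,\nu}(\cdot,y_0)=\omega_{k,\nu}(\cdot,y_0)$ gives exactly the first line of \eqref{intP5}, the second line being the definition of $\psi_{k,\nu}$. For \eqref{intP6} one starts from the third line of \eqref{int6}, identifies the integrand as $e^{-ikb(y_0)t}\omega_{k,\nu}(y,y_0)b'(y_0)$ by \eqref{intP4}, and multiplies by $e^{-\nu k^2 t}$ using $\omega_k=e^{-\nu k^2 t}\omega_k^\ast$ from \eqref{int4}.

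\emph{Change of variables.} With $v=b(y)$, $w=b(y_0)$, $B^\ast(v)=b'(y)$, the chain rule gives $\partial_y=B^\ast(v)\partial_v$ and $\partial_y^2=(B^\ast(v))^2\partial_v^2+B^\ast(v)(\partial_vB^\ast)(v)\partial_v$, together with the identity $B^\ast(v)(\partial_vB^\ast)(v)=b''(y)$. Substituting into \eqref{intP5}, using $b(y_0)-b(y)=w-v$ and $\omega_{0k}(y)=F_{0k}(v)$, produces \eqref{intP8} term by term. The change of variables $y_0\mapsto w=b(y_0)$, with $b'(y_0)\,dy_0=dw$, turns \eqref{intP6} into \eqref{intP13.1}; applying the fixed operator $(-k^2+\partial_y^2)^{-1}$ to \eqref{intP6} (which commutes with the $y_0$-integral) and then the same change of variables gives the stream-function analogue, which is rewritten via the profile $\Theta_{k,\nu}$ of \eqref{int13} to obtain \eqref{intP13.2}.

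\emph{Main obstacle.} There is no deep difficulty internal to this proposition. The real work --- sectoriality of $L_{k,\nu}$, locating its spectrum off the relevant part of the imaginary axis (using Assumption \ref{MaAs} and $\nu$ small), and the validity and rewriting of the formula \eqref{int6} --- is precisely Proposition \ref{rep1}, invoked here. The only care needed in the present argument is consistent bookkeeping: retaining the dissipation factor $e^{-\nu k^2 t}$, tracking the sign of $v-w$, and carrying the Jacobian $b'$ through the two changes of variables.
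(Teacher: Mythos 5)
Your proposal is correct and follows essentially the same route as the paper: the paper "proves" Proposition \ref{intP1} by the chain of calculations \eqref{int2}--\eqref{int13.1} preceding it, deferring the validity of the spectral representation \eqref{int6} to Proposition \ref{rep1}, exactly as you do. Your added care about the normalization constant in the exponential kernel and about the sign convention in the definition of $\Theta_{k,\nu}$ (where \eqref{int13} uses $v+w$ while \eqref{intP7} as printed uses $v-w$) is warranted, since the paper's statement contains these minor notational slips.
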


Strictly speaking the vorticity function $\omega_k(t,y)$ and stream function $\psi_k(t,y)$, together with their variants in the variable $v$, also depend on $\nu$. We omit this dependence from our notations for the sake of simplicity, since there is no danger of confusion.

\subsection{Main results}
Our main results are sharp regularity estimates in Gevrey spaces, and enhanced dissipation, for the ``profile" of the vorticity function $f_k(t,v)$. Below we allow the implied constants to depend on $\sigma_0$ from \eqref{asp1} and the structural constant $\kappa>0$ from \eqref{LAP23} connected to the limiting absorption principle.

\begin{theorem}\label{intM1}
Assume the notations and conditions in Proposition \ref{intP1}. Then there exists a constant $\nu_0\in(0,1)$ such that the following statement holds. Assume that $\nu\in(0,\nu_0)$ and $0<\delta\ll \sigma_0$. Define for $v\in\R, t\ge0$,
\begin{equation}\label{intM2}
f_k(t,v):=F_k(t,v)e^{-ikvt},\qquad \phi_k(t,v):=\Phi_k(t,v)e^{-ikvt}.
\end{equation}
Then for suitable $c_0\in(0,1)$, we have the bounds for $t\ge0$
\begin{equation}\label{intM3}
\left\|F_k(t,\cdot)\right\|_{L^2(\R)}\lesssim e^{-c_0\nu^{1/3}|k|^{2/3}t}e^{-\nu k^2t}\left\|F_{0k}\right\|_{L^2(\R)}.
\end{equation}
In addition, we have the uniform bounds for $t\ge0$,
\begin{equation}\label{intM4}
\left\|e^{\delta\langle k,\xi\rangle^{1/2}}\widehat{F_k}(t,\xi)\right\|_{L^2(\R)}\lesssim e^{-\nu k^2t}\left\|e^{\delta\langle k,\xi\rangle^{1/2}}\widehat{\,\,F_{0k}}(\xi)\right\|_{L^2(\R)}.
\end{equation}
Moreover, the profile for the stream function satisfies the bounds for $t\ge0$,
\begin{equation}\label{intM5}
\begin{split}
 \left\|\langle k,\xi-kt\rangle^2 e^{\delta\langle k,\xi\rangle^{1/2}}\widehat{\,\,\Phi_k}(t,\xi)\right\|_{L^2(\R)}\lesssim e^{-\nu k^2t}\left\|e^{\delta\langle k,\xi\rangle^{1/2}}\widehat{\,\,F_{0k}}(\xi)\right\|_{L^2(\R)}.
 \end{split}
\end{equation}
\end{theorem}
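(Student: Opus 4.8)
\medskip
\noindent\emph{Strategy of the proof.}
The plan is to reduce the three bounds to sharp, $\nu$-uniform estimates on the resolvent density $\Omega_{k,\nu}(v,w)$ solving the Orr--Sommerfeld system \eqref{intP8} (together with the stream-function profile $\Theta_{k,\nu}$ from \eqref{int13}), and then to insert these into the representation formulas \eqref{intP13.1}--\eqref{intP13.2}. The central construction is to control $\Omega_{k,\nu}$ through the ``compact perturbation'' mechanism advertised in the abstract: write the operator $L_{k,\nu}$ of \eqref{int5} as an Airy-type principal part $\tfrac{\nu}{k}(B^\ast(v))^2\partial_v^2-i(v-w)$ (plus the harmless first-order viscous term) perturbed by the nonlocal term $iB^\ast\partial_vB^\ast\,\Pi_{k,\nu}$, which is relatively compact. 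The principal part admits Airy-type resolvent bounds on $L^2$, uniform in $\nu$, exhibiting a spectral gap of size $(\nu/|k|)^{1/3}$ and decay as the spectral parameter goes to infinity; the nonlocal term is then inverted by a Fredholm/Neumann-series argument anchored on the limiting absorption principle for the inviscid Rayleigh operator. This is precisely where Assumption~\ref{MaAs} (no embedded eigenvalues) and the structural constant $\kappa$ of \eqref{LAP23} enter, ensuring invertibility uniformly down to --- and slightly past --- the real axis.

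For the enhanced dissipation bound \eqref{intM3} I would return to the Laplace representation \eqref{int6} and shift the $\mu$-contour off the imaginary axis to $\{\operatorname{Re}\mu=-\operatorname{sgn}(k)\,c_0(\nu/|k|)^{1/3}\}$, so that the oscillatory factor $e^{\mu tk}$ produces exactly the gain $e^{-c_0\nu^{1/3}|k|^{2/3}t}$; the elementary factor $e^{-\nu k^2t}$ is already present via \eqref{int3}--\eqref{int4}. The admissibility of the shift rests on two facts: (i) the Airy principal part has resolvent bounded uniformly on the strip $\operatorname{Re}\mu\gtrsim-(\nu/|k|)^{1/3}$, with integrable decay in $|\operatorname{Im}\mu|$, which follows from the classical estimate for $\epsilon\partial_z^2-iz$ after localizing near the critical point $v=w$ and rescaling by $(\nu(B^\ast(v))^2/|k|)^{1/3}$; and (ii) the nonlocal perturbation creates no spectrum in this strip, which is obtained by continuing the Rayleigh limiting absorption principle from the real axis into a thin neighbourhood. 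The shifted contour integral then converges by the decay in (i), yielding \eqref{intM3}.

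The Gevrey bounds \eqref{intM4}--\eqref{intM5} use no contour shift; they come from regularity of $\Omega_{k,\nu}(v,w)$ in the physical variable $v$ \emph{and} in the spectral variable $w$. The lemma to isolate is a ``Gevrey limiting absorption principle'': $\|e^{\delta\langle k,\xi\rangle^{1/2}}\widehat{\Omega_{k,\nu}}(\xi,w)\|_{L^2_\xi}\lesssim\|e^{\delta\langle k,\xi\rangle^{1/2}}\widehat{F_{0k}}(\xi)\|_{L^2_\xi}$ uniformly in $w$ and $\nu$ (Fourier transform in $v$), together with a matching Gevrey-$2$ bound for $w$-derivatives of $\Theta_{k,\nu}$. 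Granting these, one Fourier-transforms \eqref{intP13.1}--\eqref{intP13.2} in $v$, observes that $\widehat{F_k}(t,\xi)$ is a $w$-average of $e^{-ikwt}\widehat{\Omega_{k,\nu}}(\xi-kt,w)$ (and analogously for $\Phi_k$), and integrates by parts repeatedly in $w$: this converts $w$-regularity of the stream profile into the decay weight $\langle k,\xi-kt\rangle^{-2}$, which is the linear inviscid-damping mechanism, while the $w$-oscillation combined with the critical-layer structure of $\Omega_{k,\nu}$ (concentrated near $w\approx v$, with Gevrey-controlled corrections) realigns the shifted frequency $\xi-kt$ with $\xi$ so that the Gevrey weight closes. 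That $e^{\delta\langle k,\xi\rangle^{1/2}}$ survives the nonlocal term uses its essential subadditivity, $\langle k,\xi\rangle^{1/2}\lesssim\langle k-\ell,\xi-\eta\rangle^{1/2}+\langle\ell,\eta\rangle^{1/2}$, the constant loss being absorbed into a slight decrease of $\delta$.

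The main obstacle is the critical-layer analysis, i.e.\ controlling $\Omega_{k,\nu}(v,w)$ for $v$ near $w$, where \eqref{intP8} is genuinely singular: the inviscid Rayleigh solution already carries a $\log$-type singularity at $v=w$, and the viscous term regularizes it into an Airy profile on the scale $(\nu/|k|)^{1/3}$. One must build an explicit approximate solution --- an Airy ansatz matched to the outer Rayleigh solution --- and estimate the remainder in a norm that is simultaneously uniform in $\nu$, compatible with the Gevrey weight $e^{\delta\langle k,\xi\rangle^{1/2}}$, and strong enough to run the Fredholm/fixed-point argument for the nonlocal term. The tension between the shrinking layer width $(\nu/|k|)^{1/3}$ and the Gevrey weight is exactly why the natural class is Gevrey-$2$ (the $\tfrac12$-power in the exponent), and keeping every constant independent of $\nu$ there is the technical heart of the paper; the remaining estimates are, in spirit, perturbations of the Couette case.
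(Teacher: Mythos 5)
Your overall architecture (reduce to the Orr--Sommerfeld spectral density, treat the nonlocal term as a compact perturbation of an Airy principal part, invert via a limiting absorption principle anchored on Assumption~\ref{MaAs}, and identify the critical layer as the technical heart) matches the paper. But the key lemma you isolate for \eqref{intM4}--\eqref{intM5} is formulated with the Gevrey weight on the wrong variable, and as stated it is false uniformly in $\nu$. You ask for $\|e^{\delta\langle k,\xi\rangle^{1/2}}\widehat{\Omega_{k,\nu}}(\xi,w)\|_{L^2_\xi}\lesssim \|e^{\delta\langle k,\xi\rangle^{1/2}}\widehat{F_{0k}}\|_{L^2}$ uniformly in $w$, with the Fourier transform taken in $v$. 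The resolvent density $\Omega_{k,\nu}(v,w)$ has a critical layer of width $(\nu/|k|)^{1/3}$ at $v=w$ (in the inviscid limit it even contains $\delta(v-w)$ and a principal value, see \eqref{LAPM4.1}), so its $v$-Fourier transform decays only on the scale $|\xi|\sim(\nu/|k|)^{-1/3}$ and the weight $e^{\delta\langle\xi\rangle^{1/2}}$ produces a factor of order $e^{\delta(\nu/|k|)^{-1/6}}$, which blows up as $\nu\to0$. The uniform Gevrey regularity lives in the \emph{spectral} variable $w$ after the renormalization $(v,w)\mapsto(v+w,w)$ that defines the profile $\Theta_{k,\nu}$ in \eqref{int13}: the correct statement is Proposition~\ref{int14}, whose weight is $(|k|+|\xi|)\,e^{\delta\langle k,\eta\rangle^{1/2}}$ --- only one Sobolev derivative in the (renormalized) $v$-frequency $\xi$, and Gevrey-$2$ in the $w$-frequency $\eta$. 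The representation formulas \eqref{intP13.1}--\eqref{intP13.2} then evaluate the profile at $v$-frequency $\xi-kt$ and $w$-frequency $\xi$, so the Gevrey weight lands on $\xi$ exactly because it was placed on the $w$-frequency; with your formulation the weight would sit at $\xi-kt$ and the estimate would not close. Your closing paragraph shows you sense this (``realigns the shifted frequency''), but the lemma as written would have to be replaced, not just refined, and the singular $v$-dependence then has to be handled by explicit kernel decompositions of the Airy fundamental solution (as in \eqref{GKA6} and Proposition~\ref{GKA11}) rather than by a Gevrey bound in $v$.

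For \eqref{intM3} you take a genuinely different route: shifting the Laplace contour to $\operatorname{Re}\mu=-c_0(\nu/|k|)^{1/3}$. The paper instead stays on the imaginary axis and invokes the quantitative resolvent-to-semigroup bound of Theorem~\ref{DSR0} (a Gearhart--Pr\"uss-type result), whose hypotheses are a uniform resolvent bound \emph{on} $i\R$ (supplied by Proposition~\ref{LAP22} and \eqref{LAPM2.1}) together with an a priori bound on the semigroup --- and the latter is exactly \eqref{intM4}, which is why the Gevrey bound is proved first and the enhanced dissipation is deduced from it. Your contour shift would require continuing the limiting absorption principle into a strip whose width is comparable to the spectral gap itself; since the gap and the shift are of the same order $(\nu/|k|)^{1/3}$, this is not a soft perturbation of the on-axis estimate, and you would additionally need holomorphy and integrable decay of the full (not just Airy) resolvent along the shifted contour to justify the deformation. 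This is precisely the work the paper's formulation of Theorem~\ref{DSR0} is designed to avoid, so if you pursue the contour shift you must supply the off-axis resolvent analysis as a separate step.
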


The proof of the theorem is based on the representation formula \eqref{intP13.1}-\eqref{intP13.2} and will be given in section \ref{sec:pmt}. The main task is to obtain the following precise control on the ``profile" $\Theta_{k,\nu}(v,w)$ of the spectral density function in Gevrey spaces. 
\begin{proposition}\label{int14}
There exists a constant $\nu_0\in(0,1/8)$ sufficiently small, such that the following statement holds. For $k\in\Z\backslash\{0\}$, $\nu\in(0,\nu_0)$ and $0<\delta\ll \sigma_0$, the profile for the spectral density function, $\Theta_{k,\nu}(v,w)$ satisfies the bounds 
\begin{equation}\label{int15}
\Big\|(|k|+|\xi|)e^{\delta \langle k,\eta\rangle^{1/2}}\widetilde{\,\,\Theta_{k,\nu}}(\xi,\eta)\Big\|_{L^2(\R^2)}\lesssim \Big\|e^{\delta\langle k,\eta\rangle^{1/2}}\widehat{\,\,F_{0k}}(\eta)\Big\|_{L^2(\R)}.
\end{equation}
\end{proposition}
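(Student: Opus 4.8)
The plan is to analyze the Orr--Sommerfeld system \eqref{intP8} for $\Omega_{k,\nu}(v,w)$ and $\Pi_{k,\nu}(v,w)$ by treating the nonlocal term $iB^\ast(v)\partial_vB^\ast(v)\Pi_{k,\nu}$ as a compact perturbation of the local part, as the abstract suggests. Concretely, I would first dispose of the trivial regime $w$ far from the support of $b''$ (i.e.\ $|w|\gtrsim 1/\sigma_0$), where the forcing term $iB^\ast\partial_vB^\ast\Pi_{k,\nu}$ in the first equation of \eqref{intP8} vanishes identically; there the equation for $\Omega_{k,\nu}$ decouples into a one-dimensional Airy-type ODE $\frac{\nu}{k}(B^\ast)^2\partial_v^2\Omega - i(v-w)\Omega = F_{0k}$ (plus a lower-order first-order term), whose resolvent I can bound directly via the Airy kernel, obtaining the Gevrey estimate \eqref{int15} restricted to that range of $w$ from the hypothesis on $b''$ and the decay of $F_{0k}$. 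For the main regime $|w|\lesssim 1/\sigma_0$, the strategy is: (i) solve the \emph{homogeneous} local operator $\mathcal{L}_{k,\nu} := \frac{\nu}{k}(B^\ast)^2\partial_v^2 + \frac{\nu}{k}B^\ast\partial_vB^\ast\partial_v - i(v-w)$ and obtain a Green's function $G_{k,\nu}(v,v';w)$ with precise control of its size and regularity in the critical layer $|v-w|\lesssim (\nu/|k|)^{1/3}$ and of its exponential decay outside; (ii) convert \eqref{intP8} into the fixed-point equation $\Omega_{k,\nu} = \mathcal{L}_{k,\nu}^{-1}F_{0k} - \mathcal{L}_{k,\nu}^{-1}\big[iB^\ast\partial_vB^\ast\,(k^2 - (B^\ast)^2\partial_v^2 - B^\ast\partial_vB^\ast\partial_v)^{-1}\Omega_{k,\nu}\big]$, i.e.\ $(I + \mathcal{K}_{k,\nu})\Omega_{k,\nu} = \mathcal{L}_{k,\nu}^{-1}F_{0k}$; and (iii) invert $I + \mathcal{K}_{k,\nu}$ by showing $\mathcal{K}_{k,\nu}$ is a contraction, or at least has small operator norm, on the relevant weighted Gevrey space, using that the elliptic solve $(k^2 - \cdots)^{-1}$ gains two derivatives and localizes in $v$ near the support of $b''$, while $\mathcal{L}_{k,\nu}^{-1}$ maps into a function class controlled by the Airy scale. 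Throughout, I would work on the Fourier side in $v$ (variables $\xi,\eta$ dual to $v,w$) so that the Gevrey weight $e^{\delta\langle k,\eta\rangle^{1/2}}$ is manipulated multiplicatively, and use Assumption \ref{MaAs} (no discrete eigenvalues, encoded through the limiting absorption principle constant $\kappa$ in \eqref{LAP23}) precisely to guarantee $I + \mathcal{K}_{k,\nu}$ is invertible with uniform bounds as $\nu \to 0$.

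The key technical heart is step (i): constructing $\mathcal{L}_{k,\nu}^{-1}$ with Gevrey-class bounds uniform in $\nu$. I would build the two Jost-type solutions of the homogeneous equation by a WKB/Airy ansatz --- writing $v - w = (\nu/|k|)^{1/3}(B^\ast(w))^{2/3} z$ to rescale into the Airy equation $\partial_z^2 Y = i\,\mathrm{sgn}(k)\, z\, Y$ up to smooth, slowly varying corrections from $B^\ast(v)$ not being constant --- and then quantify how the Gevrey regularity of $b$ (hence of $B^\ast$) transfers to the solutions and their Wronskian. The Green's function is then assembled in the usual way from the decaying solutions on each side, and the crucial output is a pointwise/kernel bound of the form $|\partial_v^j G_{k,\nu}(v,v';w)| \lesssim (\nu/|k|)^{(1-2j)/3}$ in the layer with Gaussian-in-Airy-variable decay outside, together with the statement that convolution against $G_{k,\nu}$ is bounded on the weighted space $\|e^{\delta\langle k,\xi\rangle^{1/2}}\widetilde{\,\cdot\,}(\xi,\eta)\|_{L^2}$. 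The factor $|k|+|\xi|$ on the left of \eqref{int15} is recovered because the elliptic operator in the second equation of \eqref{intP8} controls $(|k|+|\xi|)^2\Pi_{k,\nu}$ by $\Omega_{k,\nu}$, so once $\Omega_{k,\nu}$ is estimated in the plain weighted $L^2$ the bound on $\Theta_{k,\nu}$ follows by the change of variables \eqref{int13} and a standard commutator estimate to move the weight $e^{\delta\langle k,\eta\rangle^{1/2}}$ past the shift $v \mapsto v + w$.

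The main obstacle I expect is controlling the commutator between the Gevrey weight $e^{\delta\langle k,\eta\rangle^{1/2}}$ and the two non-commuting, genuinely two-dimensional operations present in the problem: the critical-layer solve $\mathcal{L}_{k,\nu}^{-1}$ (which is singular, concentrated at $v = w$, i.e.\ on the diagonal, and only Hölder-type regular there at scale $(\nu/|k|)^{1/3}$) and the shift in the definition of the profile $\Theta_{k,\nu}(v,w) = \Pi_{k,\nu}(v+w,w)$, which entangles the $\xi$ and $\eta$ variables. Gevrey-$1/2$ weights are subadditive on $\langle\cdot\rangle^{1/2}$, so in principle $\langle k,\xi-\eta\rangle^{1/2} \le \langle k,\xi'\rangle^{1/2} + \langle \xi'-\xi,\xi-\eta\rangle^{1/2}$ type splittings let one absorb the weight at the cost of a lower-order Gevrey factor --- this is the role of macros like \texttt{\textbackslash factor} $= \factor$ in the paper --- but making this quantitative while simultaneously tracking the $(\nu/|k|)^{1/3}$ loss from each application of $\mathcal{L}_{k,\nu}^{-1}$ and ensuring the perturbation series for $(I+\mathcal{K}_{k,\nu})^{-1}$ converges with $\nu$-independent bounds is delicate; the smallness that closes it must come from the gain of two derivatives (equivalently a factor $(|k|+|\xi|)^{-2}$) in the elliptic solve together with the spatial localization near $\mathrm{supp}\,b''$, and one has to check this gain is not eaten by the Gevrey-weight commutator. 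A secondary difficulty is handling the region where $w$ is near the edge of $\mathrm{supp}\,b''$, where the ``decoupled Airy'' and ``compact perturbation'' regimes must be glued; I would handle this with a smooth partition of unity in $w$ and absorb the cross terms using the exponential off-diagonal decay of $G_{k,\nu}$.
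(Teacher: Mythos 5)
Your overall architecture (Airy resolvent for the local part, integral reformulation with the nonlocal term as a compact perturbation, Gevrey weights moved through via subadditivity and commutators) matches the paper's, but there are two places where the proposed mechanism would not close.

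First, the inversion of $I+\mathcal{K}_{k,\nu}$. You propose to show $\mathcal{K}_{k,\nu}$ "is a contraction, or at least has small operator norm," with the smallness coming from the two-derivative gain of the elliptic solve and the localization near ${\rm supp}\,b''$. That gain makes $\mathcal{K}_{k,\nu}$ \emph{compact}, not small: the operator $\psi\mapsto T_{\epsilon,\alpha,y_0}(ib''\psi)$ is genuinely $O(1)$, and no Neumann series converges. This is exactly why Assumption \ref{MaAs} is needed at all — if the perturbation were contractive the spectral hypothesis would be superfluous. The paper's Proposition \ref{LAP22} obtains the uniform lower bound $\|\psi+T_{\epsilon,\alpha,y_0}(ib''\psi)\|_{H^1_k}\ge\kappa\|\psi\|_{H^1_k}$ by a compactness/contradiction argument over the parameters $(\epsilon,\alpha,y_0)$, and the contradiction is reached only after identifying the $\epsilon\to0$ limit of the critical-layer resolvent as ${\rm P.V.}\frac{-f}{i(b(y)-b(y_0))}\mp\pi\frac{f(y_0)}{b'(y_0)}\delta(y-y_0)$ (the limits \eqref{LAPM4.91}--\eqref{LAPM4.101}), which converts the limiting equation into the Rayleigh eigenvalue problem for $L_k$ and then invokes the absence of embedded eigenvalues. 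Your proposal mentions the limiting absorption constant $\kappa$ in passing but never supplies this limit identification, which is the actual content connecting the viscous problem to the inviscid spectral assumption; without it the uniform-in-$\nu$ invertibility is unproved.

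Second, the quantity you propose to estimate first — $\Omega_{k,\nu}$ in a weighted $L^2$ — does not admit a $\nu$-uniform bound. The energy estimate \eqref{LAPM2.1} gives only $\epsilon^{1/3}\|w_{\epsilon,\alpha}\|_{L^2}\lesssim\|f\|_{L^2}$, and the inviscid limit of the spectral density contains a Dirac mass on the critical layer, so $\|\Omega_{k,\nu}\|_{L^2}$ degenerates like $\epsilon^{-1/3}$. Consequently "bound $\Omega$ in $L^2$, then solve the elliptic equation to get two derivatives on $\Pi$" cannot produce \eqref{int15} uniformly in $\nu$. The fixed-point equation and all the estimates must be formulated directly at the level of the stream-function profile $\Theta_{k,\nu}$ in $H^1_k$ (the paper's \eqref{BSDG43}), with the source term $F_{k,\nu}=\mathcal{G}_k\circ k_\epsilon\circ[(\rho+i\epsilon^{1/3})^{-1}F_{0k}]$ bounded as a single composite operator (Lemmas \ref{BSDG6} and \ref{BSDG19}); the one-derivative gain in \eqref{int15} comes from this combined smoothing, and the most singular piece $k^2_\epsilon$ must be handled by an explicit Fourier-side formula rather than pointwise kernel bounds. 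A more minor divergence: you build the Airy Green's function by a WKB/Jost construction, whereas the paper deliberately avoids solving the variable-coefficient Airy equation explicitly and instead derives all pointwise and Gevrey bounds from energy, local-to-global, and entanglement inequalities; your route is not obviously wrong but propagating Gevrey regularity uniformly in $\nu$ through a turning-point WKB expansion is a substantial additional difficulty you have not addressed.
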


In the above, $\widehat{\,\,h\,\,}(\xi)$ is the Fourier transform of $h\in L^2(\R)$ and $\widetilde{\,\,\Theta_{k,\nu}}(\xi,\eta)$ is the Fourier transform of $\Theta_{k,\nu}(y,y_0)$ in $y,y_0\in\R$.

We note that \eqref{intM4} provides very precise bounds on the ``profile" of the vorticity function that are uniform with respect to the viscosity, and is our main conclusion, while \eqref{intM3} gives an enhanced dissipation estimate in comparison with the rate $e^{-C\nu t}$ that holds for general solutions to the heat equation. The bound \eqref{intM5} implies that the stream function decays quadratically in $t$ if the initial data is sufficiently smooth. The estimates \eqref{intM3}-\eqref{intM4} are strong enough for proving full nonlinear inviscid damping, using also the ideas in \cite{IJacta}, although considerable additional effort to control the nonlinearity and methods in treating the slowly time dependent background shear flow would be needed. 

\subsection{Main ideas and further problems}
\subsubsection{Main ideas of proof} We briefly describe our main ideas to establish Theorem \ref{intM1}.  With the representation formula \eqref{intP13.1}-\eqref{intP13.2}, the proof of the bounds \eqref{intM4}-\eqref{intM5} is reduced to the study of the Orr-Sommerfeld equations \eqref{intP5} (in $y,y_0$ variables), and also \eqref{intP8} (in $v,w$ variables).\footnote{The study of the Orr-Sommerfeld equation has a long history. We refer to \cite{Gre1,Gre2} for an introduction to other important aspects of the equation.} The main task is to establish the bound \eqref{int15} on the profile of the spectral density function $\Theta_{k,\nu}(v,w)$. 

Our argument, on a conceptual level, can be divided into two steps. The first important step is to obtain a very detailed understanding of the fundamental solution of the main Airy part $\frac{\nu}{k}\partial_y^2+i(b(y_0)-b(y))$ of the Orr-Sommerfeld equation, in Gevrey spaces. The characterization of the fundamental solution we need consists of both pointwise bounds, see \eqref{GKA2}-\eqref{GKA3}, and estimates in Gevrey regularity spaces after suitable renormalizations, see Proposition \ref{GKA11}. 

The second main step is to show that the full Orr-Sommerfeld equation can be viewed as a relatively compact perturbation of the main Airy part, and can be treated using a limiting absorption principe, see Proposition \ref{LAP22}, under the spectral assumption that the linearized operator in the inviscid case has no discrete eigenvalues, see Assumption \ref{MaAs}. 

Therefore, the general line of our argument is analogous to the study of the Rayleigh equation, except that we need to take the small-in-size but highest-in-order diffusion term as part of the main term. The essential complication is that we can no longer solve the main Airy equation explicitly, and instead need to rely on various pointwise bounds and regularity estimates on the fundamental solution. 

On a more technical level, to establish the required pointwise bounds on the fundamental solution $k^\ast_{\epsilon}(y,z;y_0)$ with $y, z, y_0\in\R$ of the main Airy part, defined for $y, z, y_0\in\R$ and $\epsilon\in(0,1)$ as
\begin{equation}\label{Intro:GKA6}
\Big[\epsilon\partial_y^2+i(b(y_0)-b(y))\Big]k^\ast_\epsilon(y,z;y_0)=\delta(y-z),
\end{equation}
we use several energy estimates which achieve different levels of control, see section \ref{sec:pbk}. The most interesting energy estimate is perhaps the one which we call ``entanglement inequalities", see Lemma \ref{BKGL31} and the remark below it, as these inequalities provide bounds on the fundamental solution in one region by its behavior in another, possibly far away, region. Such inequalities are not new, and can be found for example in \cite{Wei3}, although our setting is slightly different. 

The pointwise bounds alone are not sufficient for our purposes, due to the strong singularity of the fundamental solution at $y=y_0$ or equivalently $v=w$. To obtain a more accurate description of the singularity, as in \cite{IJacta,JiaG}, we perform a renormalization by making the change of variable $(v,w)\to (v+w,w)$ after reformulating the fundamental solution in $v, w$ variables, see \eqref{GKA6}, which shifts all singularity to the $v$ variable and the resulting kernel becomes very smooth in $w$. 

The main remaining difficulty is to capture the singular behavior of the kernel in $v$, which is quite complicated since the kernel is both oscillatory and singular. It appears that there is no simple way to renormalize such complicated behavior. To overcome this difficulty, we derive various characterizations of the singular behavior in $v$, see for example \eqref{LAPM4.1} and the decomposition \eqref{GKA14.1}. 

Once we have sufficient understanding on the kernel, we can establish the limiting absorption principle, see Proposition \ref{LAP22}, and obtain a preliminary bound on $\Theta_{k,\nu}(v,w)$ in low regularity Sobolev spaces. We then apply the Fourier multiplier associated with the symbol $e^{\delta\langle k,\eta\rangle^{1/2}}$ and use a commutator argument to get higher regularity bounds, as in \cite{IJacta,JiaG}. 

To obtain the enhanced dissipation bound \eqref{intM3}, in section \ref{sec:dsr} we generalize an important result from \cite{Wei3} which gives sharp decay bounds on the semigroup using resolvent estimates (that was inspired by the earlier work of Helffer and Sj\"ostrand \cite{Helffer2}). Our main observation here is that the assumption in \cite{Wei3} on the generator of the semigroup being accretive may be replaced by an a priori bound on the semigroup, which can be obtained by a more detailed analysis of the spectral density function. This new formulation appears applicable for a wide range of singular perturbation problems where sharp semigroup bounds are useful. 

After the completion of this work, we learnt about the recent work of Helffer and Sj\"ostrand \cite{Helffer3} which contained a more general version of our estimate (see Theorem \ref{DSR0}) on the decay of semigroups using resolvent bounds and a priori semigroup bounds, with sharp constants. We decide to keep the statement and proof of our semigroup bounds for their simplicity and for the sake of completeness, and refer to \cite{Helffer3} for the more general version.

\subsubsection{Further problems} A natural next step is to study the uniform inviscid damping near monotonic shear flows inside a bounded periodic channel, such as $\T\times[0,1]$. In this setting, an essential new difficulty is the appearance of a ``boundary layer" near $y\in\{0,1\}$, due to the mismatch between non-slip and non-penetration boundary conditions for the Navier Stokes and Euler equations, respectively. 

Another interesting problem is to consider shear flows that are not necessarily monotonic, such as Kolmogorov flows. For such flows on the linear inviscid level, there is an additional physical phenomenon called ``vorticity depletion" which refers to the asymptotic vanishing of vorticity as $t\to\infty$ near the critical point where the derivative of the shear flow is zero, first predicted in \cite{Bouchet} and proved in \cite{Dongyi2}. A similar phenomenon was proved in \cite{Bed2} for the case of vortices. See also \cite{JiaVortex} for a refined description of the dynamics as a step towards proving nonlinear vortex symmetrization. It is an important and very intriguing problem to understand, as precisely as possible, the interaction of inviscid damping, enhanced dissipation, vorticity depletion and boundary layers, in the high Reynolds number regime. 

In the above problems, although our method here will not apply in a straightforward fashion due to various additional difficulties, the general idea of a direct comparison between Orr-Sommerfeld equation in the high Reynolds number regime and Rayleigh equation through the limiting absorption principle may be applicable. 

It is also natural to ask whether the linear asymptotic stability results in the high Reynolds number regime can be extended to the full nonlinear problem, as in \cite{Bed6} for the Couette flow on $\T\times\R$. This is a very subtle and difficult problem in general, already at the inviscid level. In the setting considered in this paper, we believe that with additional work one can extend the nonlinear inviscid damping result to Navier Stokes equations with high Reynolds number, using the techniques developed here and in \cite{IJacta}.

\subsection{Notations and conventions}
Throughout the paper we shall assume that $k\in\Z\backslash\{0\}$ and $\nu\in(0,1/10)$. We shall use $e^{\delta_0\langle \xi\rangle^{1/2}}$ with a $\delta_0\in(0,1)$ depending on $\sigma_0$ from Assumption \ref{MaAs} to measure regularity of functions involving only the background flow. For example, we assume that 
$$\left\|e^{\delta_0\langle \xi\rangle^{1/2}}\widehat{\,\,\partial_vB^\ast}(\xi)\right\|_{L^2(\R)}\lesssim1.$$

We also assume that $0<\delta\ll \delta_0$, which will appear in the Fourier multiplier $e^{\delta\langle k,\xi\rangle^{1/2}}$ used to measure regularity of solutions. 

To fix constants, we use the following definition for Fourier transforms. For any $g\in L^2(\R)$,
\begin{equation*}
\widehat{\,\,g\,\,}(\xi):=\frac{1}{\sqrt{2\pi}}\int_\R f(y)e^{-iy\xi}\,dy.
\end{equation*}
For functions $g$ of more than one variable, we use the variables $y, z,\rho, v, w$ to denote physical variables for which we do not take the Fourier transform, and use the variables $\alpha, \beta, \gamma$ to denote Fourier variables, unless otherwise specified. We also use $\widetilde{g}$ to denote the partial or full Fourier transforms for functions of more than one variable. Therefore for example, 
\begin{equation*}
\widetilde{\,\,g\,\,}(v,\rho;\xi)=\frac{1}{\sqrt{2\pi}}\int_{\R}g(v,\rho;w)e^{-iw\xi}\,dw.
\end{equation*}

We use the convention that $ X\lesssim Y$ means $X\leq C Y$, where the implied constant $C$ is allowed to depend on $\sigma_0$. Dependence on additional parameters such as $\mu$ will be indicated by the notation $\lesssim_\mu$.

For $k\in\Z\backslash\{0\}$, we define the norm for any $f\in H^1(\R)$
\begin{equation*}
\|f\|_{H^1_k(\R)}:=|k|\|f\|_{L^2(\R)}+\|\nabla f\|_{L^2(\R)}.
\end{equation*}

Finally, we use the notation that for $m\in\Z\cap[1,\infty)$ and $a\in\R^m$, $\langle a\rangle:=(1+|a|^2)^{1/2}$.

\section{Preliminaries}
In this section, we prove several technical estimates that are useful for our applications in subsequent sections. We start with a localizing estimate for the Fourier multiplier with the symbol $e^{\mu\langle k,\xi\rangle^{1/2}}, \xi\in\R$.
\begin{lemma}\label{pre1}
Assume that $k\in\Z\backslash\{0\}$ and $\mu\in(0,1)$.  There exists a constant $c_0\in(0,1)$ independent of $k,\mu$ such that the following statement holds. Let $K(y)$ be the kernel of the Fourier multiplier $e^{\mu\langle k,\xi\rangle^{1/2}}$, $\xi\in\R$, which is a function for $|y|>1/c_0$ and can be calculated for $|y|>1/c_0$ as
\begin{equation}\label{pre2}
K(y)=\frac{1}{\sqrt{2\pi}}\lim_{R\to\infty}\int_\R e^{\mu\langle k,\xi\rangle^{1/2}}\Psi(\xi/R) e^{iy\xi}\,d\xi,
\end{equation}
where the limit is taken in the sense of distributions\footnote{Strictly speaking the test functions need to be at least Gevrey-2 smooth due to the strong growth of the symbol.} for $|y|>1/c_0$, and $\Psi$ is a Gevrey regular cutoff function, satisfying
\begin{equation}\label{pre3}
\Psi\in C_0^\infty(-2,2), \quad \Psi\equiv 1 \,\,{\rm on}\,\,[-1,1], \quad\sup_{\xi\in\R}\Big[e^{\langle\xi\rangle^{3/4}}\big|\widehat{\,\,\Phi\,\,}(\xi)\big|\Big]\lesssim1.
\end{equation}
Then we have the decay estimates for $|y|>1/c_0$,
\begin{equation}\label{pre4}
|K(y)|\lesssim e^{-c_0|y|^{1/2}}.
\end{equation}

\end{lemma}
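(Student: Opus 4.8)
The plan is to estimate the kernel $K(y)$ by deforming the contour of integration in \eqref{pre2} into the complex plane and exploiting the analyticity of the integrand together with the Gevrey decay coming from the symbol $e^{\mu\langle k,\xi\rangle^{1/2}}$ versus the Gevrey-regular cutoff $\Psi$. The heuristic is standard: if we had a true symbol of the form $e^{\mu|\xi|^{1/2}}$ with a cutoff supported on $|\xi|\lesssim R$, then integrating over $|\xi|\le 2R$ and choosing $R$ to balance the growth of $e^{\mu R^{1/2}}$ against the oscillation $e^{-iy\xi}$ after shifting the contour gives decay of the form $e^{-c|y|^{1/2}}$. The fractional power $\tfrac12$ is exactly the threshold that survives because the dual exponent of $\langle k,\xi\rangle^{1/2}$ under Legendre duality is again $|y|^{1/2}$-type.

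\textbf{Key steps.} First I would record that the truncated integrand $\xi \mapsto e^{\mu\langle k,\xi\rangle^{1/2}}\Psi(\xi/R)$ is entire (the square root $\langle k,\xi\rangle^{1/2} = (1+k^2+\xi^2)^{1/2}$ has branch points only at $\xi = \pm i\sqrt{1+k^2}$, so it is analytic in the strip $|\mathrm{Im}\,\xi| < \sqrt{1+k^2}$, which is all we need since we will only shift by a bounded amount). Second, fix $y>1$ (the case $y<-1$ is symmetric) and shift the contour from $\R$ to $\R - i\tau$ for a parameter $\tau \in (0, \tfrac12\sqrt{1+k^2})$ to be chosen; the vertical pieces at $\pm\infty$ vanish because $\Psi$ has compact support, so
\begin{equation*}
K(y) = \frac{1}{\sqrt{2\pi}}\lim_{R\to\infty}\int_\R e^{\mu\langle k,\xi-i\tau\rangle^{1/2}}\Psi((\xi-i\tau)/R)\, e^{-iy(\xi-i\tau)}\,d\xi.
\end{equation*}
This produces the decaying factor $e^{-y\tau}$ out front. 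Third, I would bound the integrand: $|e^{-iy(\xi-i\tau)}| = e^{-y\tau}$, while $|e^{\mu\langle k,\xi-i\tau\rangle^{1/2}}| \le e^{\mu\langle k,\xi\rangle^{1/2} + C\mu}$ uniformly for $\tau$ in the allowed range, using $|\mathrm{Re}\,\langle k,\xi-i\tau\rangle^{1/2}| \le \langle k,\xi\rangle^{1/2} + O(1)$ (a routine estimate on the real part of the analytic square root), and $|\Psi((\xi-i\tau)/R)| \le e^{-c\langle \xi/R\rangle^{3/4}}$ follows from the Gevrey bound \eqref{pre3} on $\widehat{\Psi}$ by writing $\Psi((\xi-i\tau)/R)$ as the inverse Fourier transform and absorbing the exponential growth $e^{(\tau/R)|s|}$ from the imaginary shift into the Gevrey weight (valid once $R \gtrsim 1$, since $3/4 > 1/2 > \ldots$ dominates the linear term). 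Fourth, the remaining integral $\int_\R e^{\mu\langle k,\xi\rangle^{1/2}} e^{-c\langle\xi/R\rangle^{3/4}}\,d\xi$ is, after taking $R\to\infty$ and dropping the (now $\equiv 1$) cutoff factor on any fixed compact set, still not absolutely convergent — so the right order of operations is to keep $R$ finite, bound the integral by $C R \, e^{\mu\langle k, 2R\rangle^{1/2}} \lesssim R\, e^{C\mu\langle k\rangle^{1/2}} e^{C\mu R^{1/2}}$ (the support of $\Psi(\cdot/R)$ being $|\xi| \le 2R$), and only then pass to the limit. Combining, $|K(y)| \lesssim e^{-y\tau}\, e^{C\mu\langle k\rangle^{1/2}}\, \inf_{R}\big(R\, e^{C\mu R^{1/2}}\big)$ — but of course the infimum over $R$ is at $R$ bounded, which just gives a constant; the genuine gain must instead come from optimizing the \emph{shift} $\tau$ against $R$ simultaneously.

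\textbf{The main obstacle.} The real subtlety — and the step I expect to require the most care — is that one cannot send $R\to\infty$ and shift the contour independently; the two limits must be coupled. The correct approach is: before taking $R\to\infty$, on the support $|\mathrm{Re}\,\xi|\le 2R$ split into $|\mathrm{Re}\,\xi| \le \rho$ and $|\mathrm{Re}\,\xi| > \rho$ for an intermediate scale $\rho$; on the inner piece use the shifted contour with $\tau \asymp \rho^{1/2}$ (permissible as long as $\rho^{1/2} \lesssim \langle k\rangle^{1/2}$, i.e. $\rho \lesssim \langle k\rangle$ — and for larger $\rho$ one shifts by the maximal allowed $\tau \asymp \langle k\rangle^{1/2}$ instead), picking up $e^{-c y \rho^{1/2}}$ against a symbol cost $e^{C\mu\rho^{1/2}}$, which is favorable once $\mu$ is small (indeed $\mu < 1$, and the implied constant can be arranged so that $cy\rho^{1/2} - C\mu\rho^{1/2}$ has positive coefficient for the purposes of the final bound — more precisely one only needs to beat it for the specific near-optimal choice); on the outer piece $|\mathrm{Re}\,\xi|>\rho$ the Gevrey decay $e^{-c\langle\xi/R\rangle^{3/4}}$ of the cutoff is what tames the integral, and one lets $R\to\infty$ first there with $\rho$ fixed, then optimizes $\rho \asymp |y|$ (giving the claimed $e^{-c_0|y|^{1/2}}$ after balancing $y\cdot\rho^{1/2} = y^{3/2}$... ) — here I would in fact choose $\rho$ so that $y\rho^{1/2} \asymp \rho^{3/4}$-type quantities balance, landing on a net exponent $\asymp -|y|^{1/2}$; carrying the bookkeeping through while keeping every constant's dependence on $k$, $\mu$, $R$, $\rho$, $\tau$ explicit, and in particular verifying that the $k$-dependent prefactors $e^{C\mu\langle k\rangle^{1/2}}$ do not interfere with the stated $|y|$-only decay (they are harmless constants once $k$ is fixed, which is the regime of the lemma), is the bulk of the work. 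An alternative, possibly cleaner, route I would consider is to avoid contour shifting entirely and instead write $K(y)$ via repeated integration by parts in $\xi$ — each integration by parts gains a factor $1/|y|$ but costs a derivative on $e^{\mu\langle k,\xi\rangle^{1/2}}\Psi(\xi/R)$; since the $n$-th derivative of the Gevrey-$1/2$ symbol grows like $(Cn)^{2n} / $ (something), summing the resulting series optimized in the number of integrations by parts $n \asymp |y|^{1/2}$ yields $e^{-c|y|^{1/2}}$ directly, and the cutoff derivatives are controlled by \eqref{pre3}. I would likely present whichever of these two is shorter, but the contour-shift argument is the more robust and is the one I would attempt first.
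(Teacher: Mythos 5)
There is a genuine gap in both routes you sketch. For the contour-shift route, the decisive obstruction is that $\Psi$ is compactly supported and only Gevrey regular (its Fourier transform decays like $e^{-\langle s\rangle^{3/4}}$), so it has \emph{no} analytic continuation off the real axis: writing $\Psi(z)=\tfrac{1}{\sqrt{2\pi}}\int\widehat\Psi(s)e^{izs}\,ds$ for $\operatorname{Im}z\neq0$ requires $e^{-|s|^{3/4}}e^{|s||\operatorname{Im}z|}$ to be integrable, and the linear term always beats the sublinear $|s|^{3/4}$, contrary to your parenthetical claim that ``$3/4>1/2$ dominates the linear term.'' So $\Psi((\xi-i\tau)/R)$ is undefined and the deformation is illegitimate (one would need the almost-analytic-extension machinery, with its $\bar\partial$ error terms, which you do not supply). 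Moreover, even on the real axis your inner/outer splitting fails on the outer piece: for $\rho<|\xi|<R$ the cutoff is identically $1$, so there is no ``Gevrey decay of the cutoff'' to tame $\int e^{\mu\langle k,\xi\rangle^{1/2}}\,d\xi$ there, and that integral diverges as $R\to\infty$.

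Your alternative route (repeated integration by parts with $n\asymp|y|^{1/2}$) is the right family of ideas — it is essentially what the paper does — but as stated it also fails, for a concrete reason: each $\xi$-derivative of $e^{\mu\langle k,\xi\rangle^{1/2}}$ gains only a factor $\sim\mu\langle k,\xi\rangle^{-1/2}$ while leaving the superpolynomial factor $e^{\mu\langle k,\xi\rangle^{1/2}}$ intact, so for \emph{every} fixed $n$ the integral $\int e^{\mu\langle k,\xi\rangle^{1/2}}\langle k,\xi\rangle^{-n/2}\,d\xi$ still diverges (substitute $u=\langle\xi\rangle^{1/2}$ to see $\int u^{1-n}e^{\mu u}\,du=\infty$), and the truncated version blows up as $R\to\infty$. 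The missing idea is the paper's first move: expand $e^{\mu\langle k,\xi\rangle^{1/2}}=\sum_m\frac{\mu^m}{m!}\langle k,\xi\rangle^{m/2}$ so that each term is a polynomially bounded symbol, integrate by parts $N$ times \emph{term by term} with $N=N(m,|y|)$ (two regimes: $N\approx\frac m4+\sqrt{|y|/(4C_1)+m^2/16}$ when $|y|^{1/2}\gg m$, and $N\approx m/2$ otherwise), track the combinatorial factors $\wp(m/2,N)\,N!$ precisely, and then resum using the $\mu^m/m!$ weights, for which $\mu<1$ and $[(m/2)!]^2/m!\lesssim 2^{-m}\sqrt m$ give convergence. (A dyadic decomposition in $\xi$ with a scale-dependent number of integrations by parts would serve the same purpose.) Without one of these decompositions, neither of your arguments closes.
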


\begin{proof}
Denote for $a\in\R$ and $N\in\Z\cap[1,\infty)$,
\begin{equation}\label{pre5}
\wp(a,N):=\prod_{j=1}^N\langle a-j+1\rangle.
\end{equation}
We first note the following pointwise bound which holds for a suitable $C_0\in(1,\infty)$ and all $N, m\in\Z\cap[1,\infty)$, $\xi\in\R$,
\begin{equation}\label{pre6}
\big|\partial_\xi^N\langle k,\xi\rangle^{m/2}\big|\leq C_0^N\langle k,\xi\rangle^{m/2-N}\wp(m/2,N) (N!).
\end{equation}
The inequality \eqref{pre6} can be proved by a standard induction argument. As a corollary of \eqref{pre6}, we also have for a suitable $C_1\in(1,\infty)$ and all $m\in\Z\cap[1,\infty)$, $N\in\Z\cap[8,\infty)$ and $\xi\in\R$,
\begin{equation}\label{pre7}
\begin{split}
\big|\partial_\xi^N\big[\langle k,\xi\rangle^{m/2}\Psi(\xi/R)\big]\big|\leq C_1^N\langle k, \xi\rangle^{m/2}\wp(m/2,N)\Big[\langle k,\xi\rangle^{-N}(N!)+R^{-1}\langle \xi\rangle^{-N+1} (N!)^{4/3}\Big].
\end{split}
\end{equation}
By the integration by parts formula, we can bound for $N\in\Z\cap[8,\infty)$ and $y\in \R$ with $|y|>1$,
\begin{equation}\label{pre8}
\begin{split}
&\Big|\lim_{R\to\infty}\int_\R \langle k,\xi\rangle^{m/2}\Psi(\xi/R) e^{iy\xi}\,d\xi\Big|\\
&=\Big|\lim_{R\to\infty}\int_\R y^{-N}\partial_\xi^N\Big[\langle k,\xi\rangle^{m/2}\Psi(\xi/R)\Big] e^{iy\xi}\,d\xi\Big|\lesssim \int_\R|y|^{-N}C_1^N\langle k,\xi\rangle^{m/2-N}\wp(m/2,N)(N!)\,d\xi.
\end{split}
\end{equation}
For $a\in\R$, denote
\begin{equation}\label{pre9}
I(a):=\max\{j\in\Z:\,\,j\leq a\}.
\end{equation}
We can achieve a rough optimization in \eqref{pre8} by choosing $N$ (depending on $m$) as follows.

{\it Case I: $|y|^{1/2}\gg m$.} In this case we set
\begin{equation}\label{pre10}
N=I\bigg(\frac{m}{4}+\sqrt{\frac{|y|}{4C_1}+\frac{m^2}{16}}\,\,\bigg).
\end{equation}
We obtain from \eqref{pre8} and \eqref{pre10} that for a suitable $c_0\in(0,1)$ and $y\in\R$ with $|y|>1/c_0$,
\begin{equation}\label{pre11}
\Big|\lim_{R\to\infty}\int_\R \langle k,\xi\rangle^{m/2}\Psi(\xi/R) e^{iy\xi}\,d\xi\Big|\lesssim m^{10}\big[(m/2)! \big]^2 e^{-c_0|y|^{1/2}}.
\end{equation}

{\it Case II: $|y|^{1/2}\lesssim m$.} In this case we simply set 
\begin{equation}\label{pre12}
N=I(m/2)+3.
\end{equation}
We obtain from \eqref{pre8} and \eqref{pre12} that for a suitable $c_0\in(0,1)$ and $y\in\R$ with $|y|>1/c_0$,
\begin{equation}\label{pre13}
\Big|\lim_{R\to\infty}\int_\R \langle k,\xi\rangle^{m/2}\Psi(\xi/R) e^{iy\xi}\,d\xi\Big|\lesssim m^{10}\big[(m/2)! \big]^2\lesssim (m!)e^{-c_0m}
 e^{-c_0|y|^{1/2}}.
\end{equation}

The desired bound \eqref{pre4} then follows from \eqref{pre11} and \eqref{pre13}, and a power series expansion of $e^{\mu\langle k,\xi\rangle^{1/2}}$.

\end{proof}

The following property for the Green's function of an elliptic operator that we shall study is important for many applications.
\begin{lemma}\label{pre200}
Assume that $k\in\Z\backslash\{0\}$. Let $\mathcal{G}_k(v,v')$ be the fundamental solution to the elliptic operator $-k^2+(B^\ast(v))^2\partial_v^2+B^\ast(v)\partial_vB^\ast(v)\partial_v$, in equivalence, for $v, v'\in\R$
\begin{equation}\label{pre201}
\Big[-k^2+(B^\ast(v))^2\partial_v^2+B^\ast(v)\partial_vB^\ast(v)\partial_v\Big]\mathcal{G}_k(v,v')=\delta(v-v').
\end{equation}
Fix any $\Psi_\ell\in C_0^\infty(-10,10)$ for $\ell\in\{1,2,3\}$ with
\begin{equation}\label{BSDG4.003}
\sup_{\xi\in\R,\ell\in\{1,2,3\}}\big|e^{\langle\xi\rangle^{3/4}}\widehat{\,\,\Psi_\ell}(\xi)\big|\lesssim1. 
\end{equation}

(i) For any $j\in\Z$, we define for $v,v'\in\R$,
\begin{equation}\label{BSDG4.004}
\mathcal{G}_{k}^j(v,v'):=\mathcal{G}_k(v,v')\Psi_1(v-v'-j)\Psi_2(v').
\end{equation}
Then for a suitable $\delta_0:=\delta_0(\sigma_0)\in(0,1)$, we have the bound for $j\in\Z$ and $\alpha,\beta\in\R$,
\begin{equation}\label{BSDG4.005}
\big|\widetilde{\,\,\mathcal{G}_{k}^j}(\alpha,\beta)\big|\lesssim e^{-\delta_0|j|}\frac{e^{-\delta_0\langle\alpha+\beta\rangle^{1/2}}}{k^2+\alpha^2}.
\end{equation}

Moreover for $v,v'\in\R$,
\begin{equation}\label{BSDG4.006}
|k|\big|\mathcal{G}_{k}(v,v')\big|+\big|\partial_v\mathcal{G}_{k}(v,v')\big|\lesssim e^{-\delta_0|k||v-v'|}.
\end{equation}

(ii) Let $\mathcal{G}_k(v,v';w)$ be the Green's function for the elliptic operator $-k^2+(B^\ast(v+w))^2\partial_v^2+B^\ast(v+w)\partial_vB^\ast(v+w)\partial_v$. More precisely, for $v,v',w\in\R$, 
\begin{equation}\label{BSDG4.001}
\Big[-k^2+(B^\ast(v+w))^2\partial_v^2+B^\ast(v+w)\partial_vB^\ast(v+w)\partial_v\Big]\mathcal{G}_k(v,v';w)=\delta(v-v').
\end{equation}

For any $j\in\Z$ and $w_0\in\R$, we define for $v,v',w\in\R$,
\begin{equation}\label{BSDG4.004}
\mathcal{G}_{k,w_0}^j(v,v';w):=\mathcal{G}_k(v,v';w)\Psi_1(w-w_0)\Psi_2(v-v'-j)\Psi_3(v').
\end{equation}
Then for a suitable $\delta_0=\delta_0(\sigma_0)\in(0,1)$, we have the bound for $j\in\Z$ and $\alpha,\beta, \xi\in\R$,
\begin{equation}\label{BSDG4.005}
\big|\widetilde{\,\,\mathcal{G}_{k,w_0}^j}(\alpha,\beta;\xi)\big|\lesssim e^{-\delta_0|j|}\frac{e^{-\delta_0\langle\alpha+\beta\rangle^{1/2}}}{k^2+\alpha^2}e^{-\delta_0\langle\xi\rangle^{1/2}}.
\end{equation}

(iii) Moreover for $v,v',w_0\in\R$ and $\xi\in\R$,
\begin{equation}\label{BSDG4.006}
|k|\big|\widetilde{\,\,\mathcal{G}_{k,w_0}}(v,v';\xi)\big|+\big|\partial_v\widetilde{\,\,\mathcal{G}_{k,w_0}}(v,v';\xi)\big|\lesssim e^{-\delta_0|k||v-v'|}e^{-\delta_0\langle\xi\rangle^{1/2}}.
\end{equation}
 
\end{lemma}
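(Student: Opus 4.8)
The plan is to treat the three parts in sequence, with the heart of the matter being part (i), and parts (ii)--(iii) following by a variation of the same argument plus a tensor-type estimate in the extra variable $w$. First I would establish the exponential pointwise decay \eqref{BSDG4.006} for $\mathcal{G}_k(v,v')$. Since $B^\ast(v)=b'(y)\in[\sigma_0,1/\sigma_0]$ and the operator $-k^2+(B^\ast)^2\partial_v^2+B^\ast(B^\ast)'\partial_v$ is, after the change of variables back to $y$, exactly $-k^2+\partial_y^2$, the Green's function $\mathcal{G}_k(v,v')$ is literally $-\frac{1}{2|k|}e^{-|k||y-y_0|}$ composed with $v=b(y)$, $v'=b(y_0)$; hence $|k||v-v'|\le \sigma_0^{-1}|k||y-y_0|$ gives $|\mathcal{G}_k|\lesssim |k|^{-1}e^{-\delta_0|k||v-v'|}$ with $\delta_0=\sigma_0$, and the $\partial_v$ estimate follows since $\partial_v=(B^\ast)^{-1}\partial_y$ is a bounded vector field. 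This also makes \eqref{pre201} rigorous as a statement about a genuine locally integrable kernel.

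Next I would prove the Gevrey-type Fourier bound \eqref{BSDG4.005} for the localized pieces $\mathcal{G}_k^j$. Writing $\mathcal{G}_k(v,v')=-\frac{1}{2|k|}e^{-|k||y(v)-y(v')|}$, I would fix the window $\Psi_1(v-v'-j)\Psi_2(v')$ so that on the support of $\mathcal{G}_k^j$ we have $|v-v'-j|\le 10$ and $|v'|\le 10$, which already yields the factor $e^{-\delta_0|j|}$ from the pointwise bound (the kernel is $O(e^{-\delta_0|k||j|})\le O(e^{-\delta_0|j|})$ there, absorbing the $k^{-1}$ into $1/(k^2+\alpha^2)$ after noting $|k|/(k^2+\alpha^2)\lesssim 1/(k^2+\alpha^2)\cdot|k|$, or more cleanly keeping $|k||\mathcal{G}_k|\lesssim e^{-\delta_0|k||v-v'|}$ and dividing). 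The $1/(k^2+\alpha^2)$ decay in the $\alpha$ variable comes from the ellipticity: applying $-k^2+(B^\ast)^2\partial_v^2+\cdots$ to $\mathcal{G}_k^j$ produces $\delta(v-v')\Psi_1(-j)\Psi_2(v')$ plus commutator terms supported where the cutoffs vary, all of which are Gevrey-$2$ regular in $(v,v')$ with the stated exponential decay, and inverting the symbol $k^2+\alpha^2$ (which is elliptic of order $2$ and smooth) costs only a harmless Gevrey factor. The $e^{-\delta_0\langle\alpha+\beta\rangle^{1/2}}$ factor is the delicate point: it encodes that $\mathcal{G}_k^j$ is Gevrey-$2$ jointly, and crucially that its singular support is the diagonal $v=v'$, i.e. it depends on $v-v'$ only through a Gevrey-$2$ function while being Gevrey-$2$ smooth in the ``sum'' direction. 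Here I would invoke \eqref{asp1}, namely $\sup_\xi e^{\sigma_0\langle\xi\rangle^{1/2}}|\widehat{b''}(\xi)|\le 1/\sigma_0$, which propagates to $B^\ast$ and to $y(v)$ as functions with $e^{\delta_0\langle\xi\rangle^{1/2}}$-integrable Fourier transforms, and then run the algebra argument (as in Lemma \ref{pre1}, via the derivative bounds $|\partial_\xi^N\langle\cdot\rangle^{m/2}|\lesssim C^N\langle\cdot\rangle^{m/2-N}\wp(m/2,N)N!$ together with the fact that $e^{-|k||y(v)-y(v')|}$ is a composition of the Gevrey-$2$ exponential with a Gevrey-$3/2$-or-better change of variables). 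Composition and multiplication both stay inside the Gevrey-$2$ class measured by $e^{\delta_0\langle\cdot\rangle^{1/2}}$, so the product $\mathcal{G}_k(v,v')\Psi_1(v-v'-j)\Psi_2(v')$ has $\widetilde{\mathcal{G}_k^j}(\alpha,\beta)$ controlled by $e^{-\delta_0(\langle\alpha\rangle^{1/2}+\langle\beta\rangle^{1/2})}/(k^2+\alpha^2)$ times $e^{-\delta_0|j|}$; finally $\langle\alpha\rangle^{1/2}+\langle\beta\rangle^{1/2}\gtrsim\langle\alpha+\beta\rangle^{1/2}$ gives the stated form after adjusting $\delta_0$ (actually one gets the stronger $e^{-\delta_0(\langle\alpha\rangle^{1/2}+\langle\beta\rangle^{1/2})}$, but I would only claim what is needed).

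For parts (ii)--(iii) I would repeat the argument with $B^\ast(v)$ replaced by $B^\ast(v+w)$ throughout, treating $w$ near $w_0$ as a parameter; the point is that $w\mapsto B^\ast(\cdot+w)$ is again Gevrey-$2$ with the same quantitative control, uniformly in $w_0$ because the bounds on $B^\ast$ in \eqref{asp1} are translation invariant. Cutting off by $\Psi_1(w-w_0)$ and running the multiplication/composition estimates in all of $(v,v',w)$ jointly then produces the extra factor $e^{-\delta_0\langle\xi\rangle^{1/2}}$ in \eqref{BSDG4.005}, and setting $j$-sum aside and evaluating gives \eqref{BSDG4.006} of part (iii) exactly as in part (i). The main obstacle, as flagged, is the Gevrey regularity bookkeeping for the composition $e^{-|k||y(v)-y(v')|}$: one must check that the change of variables $v\mapsto y(v)$, the absolute value, and the exponential compose without degrading the Gevrey-$2$ class, and that the resulting constants are uniform in $k$ after the natural rescaling $v\mapsto kv$ (which converts $e^{-|k||y(v)-y(v')|}$ into a fixed-profile kernel). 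I expect the cleanest route is to avoid explicit constants by establishing once and for all a ``Gevrey-$2$ algebra and composition'' lemma adapted to the weight $e^{\delta_0\langle\cdot\rangle^{1/2}}$ and the operation of dividing by the elliptic symbol $k^2+\alpha^2$, and then simply feed in $B^\ast$ and $y(\cdot)$.
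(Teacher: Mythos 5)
Your route is the same as the paper's: the paper's entire proof consists of writing the explicit formula $\mathcal{G}_k(v,v')=-\tfrac{1}{|k|}\,e^{-|k||b^{-1}(v)-b^{-1}(v')|}/B^\ast(v')$ (note you dropped the Jacobian factor $1/B^\ast(v')$ coming from $\delta(v-v')=\delta(y-y_0)/b'(y_0)$ --- harmless, since it is a Gevrey function bounded away from zero) and then invoking the Gevrey algebra/composition properties from the appendices of \cite{IOJI} and \cite{JiaG}. Your pointwise bounds, the source of the $e^{-\delta_0|j|}$ factor, and the reduction of (ii)--(iii) to (i) with $w$ as a Gevrey parameter are all correct and match the intended argument.

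One assertion in your treatment of \eqref{BSDG4.005} is wrong, and it is worth correcting because it concerns exactly the structural point the estimate is designed to express. You claim that $\widetilde{\mathcal{G}_k^j}(\alpha,\beta)$ is in fact controlled by the ``stronger'' bound $e^{-\delta_0(\langle\alpha\rangle^{1/2}+\langle\beta\rangle^{1/2})}/(k^2+\alpha^2)$, deducing the stated form via $\langle\alpha\rangle^{1/2}+\langle\beta\rangle^{1/2}\gtrsim\langle\alpha+\beta\rangle^{1/2}$. This is false for $|j|\le 10$: the kernel has a corner on the diagonal $v=v'$ (the $|y(v)-y(v')|$ in the exponent), so it is not jointly Gevrey-$2$ in $(v,v')$, and for fixed $\beta$ the decay in $\alpha$ is only the polynomial $1/(k^2+\alpha^2)$ --- no sub-exponential factor in $\alpha$ alone is available. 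The correct mechanism is the change of variables $u=v-v'$: writing $\mathcal{G}_k^j(v,v')=H(v-v',v')$ with $H(u,v')$ singular at $u=0$ but Gevrey-$2$ in $v'$ uniformly in $u$, one has $\widetilde{\mathcal{G}_k^j}(\alpha,\beta)=\widetilde{H}(\alpha,\alpha+\beta)$, so the Gevrey decay lives only in the variable $\alpha+\beta$ (dual to the simultaneous shift $(v,v')\mapsto(v+h,v'+h)$, which preserves the singular support), while the transverse variable $\alpha$ contributes only $1/(k^2+\alpha^2)$. This is precisely why the lemma is stated with $e^{-\delta_0\langle\alpha+\beta\rangle^{1/2}}$ rather than a product of decays in $\alpha$ and $\beta$; with this repair your argument goes through.
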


\begin{proof}
By the change of variables \eqref{int10}, we have the explicit formula for $v,v',w\in\R$,
\begin{equation}\label{BSDG4.0002}
\mathcal{G}_k(v,v')=-\frac{1}{|k|}\frac{e^{-|k||b^{-1}(v)-b^{-1}(v')|}}{B^\ast(v')}
\end{equation}
and
\begin{equation}\label{BSDG4.002}
\mathcal{G}_k(v,v';w)=-\frac{1}{|k|}\frac{e^{-|k||b^{-1}(v+w)-b^{-1}(v'+w)|}}{B^\ast(v'+w)}.
\end{equation}
The desired conclusions follow from \eqref{BSDG4.0002}-\eqref{BSDG4.002}, using the properties of Gevrey spaces, see Appendices A in \cite{IOJI} and \cite{JiaG} for more details.
\end{proof}

The following integral inequality will be used frequently below. 

\begin{lemma}\label{pre300}
Assume that $k\in\Z\backslash\{0\}$ and $\sigma\in(0,1)$. We have for $\epsilon\in(0,\sigma^3/2)$ and $v,\rho\in\R$,
\begin{equation}\label{BSDG4.007}
\int_{\R}e^{-\delta_0|k||v-v'|} \epsilon^{-1/3}\langle\epsilon^{-1/3}v',\epsilon^{-1/3}\rho\rangle^{1/2} e^{-\sigma\langle\epsilon^{-1/3}v',\epsilon^{-1/3}\rho\rangle^{1/2}\epsilon^{-1/3}|v'-\rho|}\,dv'\lesssim_\sigma  e^{-\delta_0|v-\rho|}.
\end{equation}

\end{lemma}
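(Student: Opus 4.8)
The plan is to split the integral over $v'$ according to whether $v'$ is close to $\rho$ or close to $v$, exploiting the fact that the two exponential factors $e^{-\delta_0|k||v-v'|}$ and the Gaussian-type factor centered at $\rho$ are each localizing, but at very different scales: the first decays on the unit scale in $v'$ (since $|k|\ge 1$), while the Airy-type factor $e^{-\sigma\langle\epsilon^{-1/3}v',\epsilon^{-1/3}\rho\rangle^{1/2}\epsilon^{-1/3}|v'-\rho|}$ decays on a scale $\lesssim \epsilon^{1/3}\ll 1$ near $v'=\rho$. The first step is therefore to record the elementary one–dimensional bound
\begin{equation}\label{pre300pf1}
\int_{\R} \epsilon^{-1/3}\langle \epsilon^{-1/3}v',\epsilon^{-1/3}\rho\rangle^{1/2} e^{-\sigma\langle\epsilon^{-1/3}v',\epsilon^{-1/3}\rho\rangle^{1/2}\epsilon^{-1/3}|v'-\rho|}\,dv'\lesssim_\sigma 1,
\end{equation}
uniformly in $\rho$ and in $\epsilon\in(0,\sigma^3/2)$, which follows after the rescaling $v'=\rho+\epsilon^{1/3}s$ by observing that $\langle \epsilon^{-1/3}\rho+s,\epsilon^{-1/3}\rho\rangle^{1/2}\ge \langle s\rangle^{1/2}/2$ for the exponent and that $\langle\epsilon^{-1/3}\rho+s,\epsilon^{-1/3}\rho\rangle^{1/2}\le \langle s\rangle^{1/2}\langle 2\epsilon^{-1/3}\rho\rangle^{1/2}$ can be absorbed because the exponential gain $e^{-(\sigma/4)\langle s\rangle^{1/2}(\epsilon^{-1/3}|\rho|+|s|)}$ dominates the polynomial prefactor; the integral in $s$ is then finite and $\sigma$-dependent only.

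Next I would use \eqref{pre300pf1} to treat the contribution of the region $|v'-\rho|\le |v-\rho|/2$. On this region $|v-v'|\ge |v-\rho|/2$, so $e^{-\delta_0|k||v-v'|}\le e^{-\delta_0|k||v-\rho|/2}\le e^{-\delta_0|v-\rho|/2}$ (using $|k|\ge 1$), and pulling this factor out of the integral and bounding the remaining integral by \eqref{pre300pf1} gives the desired bound $\lesssim_\sigma e^{-\delta_0|v-\rho|/2}$ on this piece. For the complementary region $|v'-\rho|\ge |v-\rho|/2$, the Airy factor supplies the decay: since $\langle\epsilon^{-1/3}v',\epsilon^{-1/3}\rho\rangle^{1/2}\ge 1$ and $\epsilon^{-1/3}\ge (\sigma^3/2)^{-1/3}\gtrsim_\sigma 1$, we have $\sigma\langle\epsilon^{-1/3}v',\epsilon^{-1/3}\rho\rangle^{1/2}\epsilon^{-1/3}|v'-\rho|\gtrsim_\sigma |v'-\rho|\ge |v-\rho|/2$, so one extracts $e^{-c_\sigma|v-\rho|/2}$ from part of this exponent, and what remains is again integrable by (a trivial variant of) \eqref{pre300pf1} together with the $e^{-\delta_0|k||v-v'|}\le 1$ bound. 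Adding the two contributions yields \eqref{BSDG4.007} with $\delta_0$ replaced by $\min\{\delta_0,c_\sigma\}/2$; since the statement only asserts a bound with the symbol $\delta_0$ and implied constant depending on $\sigma$, one may simply relabel, or alternatively choose $\delta_0$ small enough from the outset that $\delta_0\le c_\sigma$.

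The only genuinely delicate point is the uniform estimate \eqref{pre300pf1}: one must make sure that the polynomial prefactor $\langle\epsilon^{-1/3}v',\epsilon^{-1/3}\rho\rangle^{1/2}$, which blows up as $|\rho|\to\infty$, is controlled by the exponential. The mechanism is that for $|\rho|$ large the exponent contains a term $\sim \sigma\langle\epsilon^{-1/3}\rho\rangle^{1/2}\epsilon^{-1/3}|v'-\rho|$ which forces $|v'-\rho|$ to be extremely small, of size $\lesssim \epsilon^{1/3}\langle\epsilon^{-1/3}\rho\rangle^{-1/2}$, over which range the prefactor integrates to something uniformly bounded; a clean way to see this is to use the pointwise inequality $x\,e^{-\sigma x|v'-\rho|}\le C_\sigma |v'-\rho|^{-1}$ is too crude, so instead I would keep the $\langle s\rangle^{1/2}$ factor from the rescaling and split $\langle\epsilon^{-1/3}v',\epsilon^{-1/3}\rho\rangle^{1/2}\le \langle s\rangle^{1/2}+|\epsilon^{-1/3}\rho|$, bounding the first term's contribution by $\int_\R\langle s\rangle^{1/2}e^{-(\sigma/4)\langle s\rangle^{1/2}|s|}\,ds<\infty$ and the second by $\int_\R \epsilon^{-1/3}\cdot\epsilon^{-1/3}|\rho|\, e^{-(\sigma/2)\epsilon^{-1/3}|\rho|\,\epsilon^{-1/3}|s|\epsilon^{1/3}}\cdots$ — more simply, after rescaling the factor $|\epsilon^{-1/3}\rho|\,e^{-(\sigma/2)|\epsilon^{-1/3}\rho|\,|s|}$ integrates in $s$ to $2/\sigma$ regardless of $\rho$. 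This elementary dominance of the Airy exponential over the algebraic weight is exactly the inequality one needs, and once it is in place the rest of the argument is the routine region decomposition described above.
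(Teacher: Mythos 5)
Your argument is correct. The paper gives no proof of this lemma at all (it is dismissed as ``straightforward calculations''), so there is nothing to compare against; your write-up is a valid filling-in of the omitted computation, and the two-region split (near $\rho$ versus near $v$) together with the uniform rescaled bound \eqref{pre300pf1} is exactly the content one needs. Two small remarks. First, your version produces $e^{-\delta_0|v-\rho|/2}$ rather than $e^{-\delta_0|v-\rho|}$; rather than relabeling, you can keep the full constant by using the triangle inequality $|v-v'|\ge |v-\rho|-|v'-\rho|$ on all of $\R$, pulling out $e^{-\delta_0|v-\rho|}$ and absorbing the leftover $e^{\delta_0|v'-\rho|}$ into the Airy factor: this is precisely where the hypothesis $\epsilon<\sigma^3/2$ enters, since it guarantees $\sigma\langle\epsilon^{-1/3}v',\epsilon^{-1/3}\rho\rangle^{1/2}\epsilon^{-1/3}\ge\sigma\epsilon^{-1/3}>1\ge\delta_0$, so half of the Airy exponent survives and the remaining integral is again controlled by \eqref{pre300pf1}. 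This also makes the region decomposition unnecessary. Second, in your final paragraph the display $|\epsilon^{-1/3}\rho|\,e^{-(\sigma/2)|\epsilon^{-1/3}\rho||s|}$ is not what the exponent actually provides: with $R=\epsilon^{-1/3}\rho$ one only has $\langle R+s,R\rangle^{1/2}\ge\langle R\rangle^{1/2}$, not $\ge|R|$, so the correct split is $\langle R+s,R\rangle^{1/2}\lesssim\langle s\rangle^{1/2}+\langle R\rangle^{1/2}$ paired with $e^{-(\sigma/2)\langle R\rangle^{1/2}|s|}$, whose $s$-integral is $4/(\sigma\langle R\rangle^{1/2})\cdot\langle R\rangle^{1/2}=4/\sigma$. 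Your preceding sentence (prefactor $\epsilon^{-1/3}\langle R\rangle^{1/2}$ integrated over an interval of length $\epsilon^{1/3}\langle R\rangle^{-1/2}$) already states the correct mechanism, so this is only a slip in the illustrative formula, not a gap.
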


\begin{proof}
The proof follows from straightforward calculations.
\end{proof}


\section{Pointwise bounds on the kernel of generalized Airy operators}\label{sec:pbk}

In this section, we prove the following bounds on the kernel for the generalized Airy operator $\partial_Y^2+iV(Y)$ where $V(Y)\in C^1(\R)$ is real valued and ``non-stationary" in the sense that $|V'(Y)|\in[\sigma, 1/\sigma]$ for all $Y\in \R$ and some $\sigma\in(0,1)$.
\begin{proposition}\label{BKG1}
Assume that $\sigma\in(0,1)$, $V(Y)\in C^1(\R)$ with $V(0)=0$ and $|V'(Y)|\in[\sigma,1/\sigma]$ for all $Y\in\R$, and $\alpha\in[0,\infty)$. Let $K(Y,Z)$ be the fundamental solution to the generalized Airy operator $\partial_Y^2-\alpha+iV(Y)$. More precisely, for $Y, Z\in\R$, in the sense of distributions, 
\begin{equation}\label{BKG2}
\partial_Y^2K(Y,Z)-\alpha K(Y,Z)+iV(Y)K(Y,Z)=\delta(Y-Z).
\end{equation}
Then there exists $c_0\in(0,\infty)$ depending only on $\sigma$ such that
\begin{equation}\label{BKG3}
|K(Y,Z)|\lesssim_\sigma\frac{1}{\langle \alpha, Z\rangle^{1/2}}e^{-c_0\langle \alpha, Y, Z\rangle^{1/2}|Y-Z|},
\end{equation}
and
\begin{equation}\label{BKG4}
|\partial_YK(Y,Z)|\lesssim_\sigma\frac{\langle \alpha,Y\rangle^{1/2}}{\langle \alpha,Z\rangle^{1/2}}e^{-c_0\langle \alpha, Y, Z\rangle^{1/2}|Y-Z|}.
\end{equation}
In addition, for all $Z\in\R$,
\begin{equation}\label{BKG5}
|K(Z,Z)|\approx_{\sigma}\langle \alpha, Z\rangle^{-1/2}.
\end{equation}
\end{proposition}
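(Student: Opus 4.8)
The plan is to fix $Z\in\R$ and study $u(Y):=K(Y,Z)$. It lies in $H^1(\R)$, is $C^1$ on $\R\setminus\{Z\}$ where it solves $u''-\alpha u+iV(Y)u=0$, satisfies the jump $u'(Z^+)-u'(Z^-)=1$, and decays super-exponentially at $\pm\infty$ together with $u'$; existence and uniqueness of this decaying Green's function, hence the validity of all integrations by parts below, follow from standard ODE theory since $\mathrm{Re}(\alpha-iV(Y))=\alpha\ge0$ and $|\alpha-iV(Y)|\to\infty$. The guiding heuristic is that, writing $\alpha-iV(Y)=\lambda-i\widetilde V(Y)$ with $\lambda:=\alpha-iV(Z)$ ($\mathrm{Re}\,\lambda\ge0$, $\langle\lambda\rangle\asymp_\sigma\langle\alpha,Z\rangle$) and $\widetilde V:=V-V(Z)$ vanishing at $Z$, on the length scale $\ell:=\langle\alpha,Z\rangle^{-1/2}$ the operator is a small perturbation of the constant–coefficient model $\partial_Y^2-\lambda$ (and of a fixed unit–scale generalized Airy operator when $\langle\alpha,Z\rangle\asymp1$), whose diagonal Green's function has modulus $\asymp\langle\alpha,Z\rangle^{-1/2}$; likewise $\mathrm{Re}\sqrt{\alpha-iV(s)}\asymp_\sigma\langle\alpha,s,Z\rangle^{1/2}$ and $\int_Z^Y\langle\alpha,s,Z\rangle^{1/2}\,ds\asymp_\sigma\langle\alpha,Y,Z\rangle^{1/2}|Y-Z|$, which is the decay rate to be proved.

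\textbf{Global energy identities and the diagonal bound \eqref{BKG5}.} Pairing the equation with $\bar u$ and separating real and imaginary parts gives
\[
\|u'\|_{L^2(\R)}^2+\alpha\|u\|_{L^2(\R)}^2=-\mathrm{Re}\,u(Z),\qquad\int_\R V(Y)|u(Y)|^2\,dY=-\mathrm{Im}\,u(Z),
\]
so $\|u'\|_{L^2}^2+\alpha\|u\|_{L^2}^2\le|K(Z,Z)|$ and $\big|\int_\R V|u|^2\big|\le|K(Z,Z)|$. For the upper bound: when $\langle\alpha,Z\rangle\asymp1$ it is the unit–scale model comparison; when $\langle\alpha,Z\rangle\gg1$, choose an interval $I_\sharp\ni Z$ of length $\asymp_\sigma\langle\alpha,Z\rangle$ on which $\alpha+|V(Y)|\asymp_\sigma\langle\alpha,Z\rangle$, pair the equation with $\chi^2\bar u$ for a cutoff $\chi$ adapted to $I_\sharp$, and absorb the $O(|\chi'|)$ flux error via the global bound on $\|u'\|_{L^2}$; this gives $\langle\alpha,Z\rangle\,\|u\|_{L^2(I_\sharp)}^2\lesssim_\sigma|K(Z,Z)|$ (from the imaginary part when $0\notin I_\sharp$, and directly from the global real–part identity when $0\in I_\sharp$, since then $\alpha\asymp_\sigma\langle\alpha,Z\rangle$). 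Combining with $\|u'\|_{L^2}^2\le|K(Z,Z)|$ and the one–dimensional interpolation inequality on $I_\sharp$ yields $|K(Z,Z)|^2\le\|u\|_{L^\infty(I_\sharp)}^2\lesssim_\sigma\langle\alpha,Z\rangle^{-1/2}|K(Z,Z)|$. The matching lower bound is a direct ODE argument: were $|u(Z)|$ much smaller than $\langle\alpha,Z\rangle^{-1/2}$, then $\|u'\|_{L^2}^2\le|u(Z)|$ together with $u''=(\alpha-iV)u$ on $[Z-\ell,Z+\ell]$ (where $\ell^2\sup|\alpha-iV|\lesssim_\sigma1$) would force $|u'(Z^+)|,|u'(Z^-)|\ll1$, contradicting $u'(Z^+)-u'(Z^-)=1$; hence \eqref{BKG5}.

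\textbf{Exponential decay \eqref{BKG3} — the main obstacle.} A direct WKB/Riccati argument ($u\sim\exp(-\int_Z^Y\sqrt{\alpha-iV})$) is obstructed by the zeros of the complex solution $u$, so I would argue via an Agmon–type weighted estimate coupled with an ``entanglement'' (region–to–region) inequality. Partition $\R$ into intervals $(I_m)_{m\in\Z}$ with $I_0\ni Z$, $|I_m|\asymp_\sigma\langle\alpha,Y,Z\rangle^{-1/2}$ for $Y\in I_m$, and $\langle\alpha,Y,Z\rangle$ essentially constant $=:\Lambda_m$ (nondecreasing as $|m|$ grows) on $I_m$; choose a Lipschitz weight $\phi$ with $\phi(Z)=0$, $\phi'(Y)\asymp c_0\langle\alpha,Y,Z\rangle^{1/2}$, slowly varying on scale $|\phi'|^{-1}$, so $\phi$ is essentially constant on each $I_m$ with consecutive values spaced by $\asymp c_0$. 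Setting $w=ue^{\phi}$ and pairing $e^{\phi}\times(\text{equation})$ with $\bar w$ gives, modulo $\phi',\phi''$ commutator terms,
\[
\int_\R|w'|^2+\int_\R(\alpha-iV)|w|^2-\int_\R(\phi')^2|w|^2=\overline{u(Z)}.
\]
The obstruction is precisely that $\int(\phi')^2|w|^2\asymp c_0^2\sum_m\Lambda_m\int_{I_m}|w|^2$ is not absorbed by the real part alone when $\alpha$ is small and $V$ changes sign; to close the estimate one localizes to a single $I_m$ and proves an entanglement inequality $E_m\le\theta\,E_{m-1}$, $\theta=\theta(c_0,\sigma)\in(0,1)$, for a suitably defined weighted energy $E_m$ of $u$ on $I_m$ and $c_0$ small, using on $I_m$ that $\int_{I_m}|w'|^2+\int_{I_m}(\alpha+|V|)|w|^2$ dominates $\Lambda_m\int_{I_m}|w|^2$ up to boundary fluxes on $\partial I_m$ (and, near the sign change of $V$, a one–sided multiplier $\mathbf 1_{\{\pm Y>0\}}$). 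Iterating gives $E_m\lesssim_\sigma\theta^{|m|}E_0$ with $E_0\lesssim_\sigma|K(Z,Z)|$; translating the $L^2$–on–$I_m$ bound to $L^\infty$ by one–dimensional interpolation, using \eqref{BKG5}, and noting $\#\{m\ \text{between}\ 0\ \text{and the index of}\ Y\}\asymp_\sigma\langle\alpha,Y,Z\rangle^{1/2}|Y-Z|$, one obtains \eqref{BKG3}, the polynomial losses from interpolation being absorbed into a slightly smaller $c_0$. Carrying all constants uniformly down to $\alpha=0$, and handling the transition zone $|Y|\asymp1$ where $V$ vanishes and the local scale is $O(1)$, is the delicate point of the whole argument.

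\textbf{The derivative bound \eqref{BKG4}.} For any $Y$, on a fixed fraction of the interval of length $\asymp_\sigma\langle\alpha,Y\rangle^{-1/2}$ about $Y$ — on which $|\alpha-iV|\asymp_\sigma\langle\alpha,Y\rangle$ and, away from $Y=Z$, $u\in C^2$ — the one–dimensional Landau–Kolmogorov inequality applied to $u''=(\alpha-iV)u$ gives $|u'(Y)|\lesssim_\sigma\langle\alpha,Y\rangle^{1/2}\sup|u|$ over that interval; near $Y=Z$ one adds the contribution of the unit jump of $u'$, which is consistent with $\langle\alpha,Z\rangle^{1/2}\cdot\langle\alpha,Z\rangle^{-1/2}=1$. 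Inserting \eqref{BKG3} and the slow variation of $\langle\alpha,\cdot,Z\rangle$ yields \eqref{BKG4}.
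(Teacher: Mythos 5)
Your overall strategy coincides with the paper's: the energy identities obtained by pairing the equation with $\overline{K}$ and $V\overline{K}$, the interpolation inequality $\|f\|_{L^\infty(I)}\lesssim\|f\|_{L^2(I)}|I|^{-1/2}+\|f'\|_{L^2(I)}|I|^{1/2}$ for the diagonal upper bound, the jump-condition contradiction for the lower bound, a weighted (``entanglement'') energy estimate for the exponential decay, and local use of the equation for \eqref{BKG4}. Two points, however, are not closed as written.

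First, the iteration $E_m\le\theta E_{m-1}$ ``up to boundary fluxes on $\partial I_m$'' hides the real difficulty: any admissible weight must ramp down to zero at the far end of its support, and there the term $|\varphi'|^2|K|^2$ has the wrong sign and is not dominated by the coercive part. The paper resolves this with a separate local-to-global estimate (Lemma \ref{BKGL1}): the $L^2$ mass of $K$ on a single window of width $\langle\alpha,A\rangle^{-1/2}$ on the far side of $Z$ controls the entire tail beyond it, and this is precisely what allows the bad boundary contribution in \eqref{BKG36} to be absorbed by the good one (see \eqref{BKG36.5}, where $\ell$ is taken large depending on $\sigma$). Without this ingredient, or an equivalent one-sided estimate, your contraction does not close.

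Second, your Agmon weight $\phi'\asymp c_0\langle\alpha,Y,Z\rangle^{1/2}$ and local scales $\Lambda_m:=\langle\alpha,Y,Z\rangle$ are too aggressive: the coercivity actually available from the equation at a point $Y$ is $\alpha+|V(Y)|\asymp\alpha+|Y|$, which for $|Y|\ll|Z|$ is much smaller than $\langle\alpha,Y,Z\rangle$, so the claimed domination $\int_{I_m}(\alpha+|V|)|w|^2\gtrsim\Lambda_m\int_{I_m}|w|^2$ fails on such intervals. The correct pointwise rate is $\langle\alpha,Y\rangle^{1/2}$ (this is what appears in \eqref{BKGL32} and in the weight construction \eqref{BKG34}); the stated exponent is then recovered because $\int_Z^Y\langle\alpha,s\rangle^{1/2}\,ds\asymp\langle\alpha,Y,Z\rangle^{1/2}|Y-Z|$, as your own heuristic paragraph notes. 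With these two repairs your argument becomes essentially the paper's proof.
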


The rest of the section is devoted to the proof of Proposition \ref{BKG1}. Without loss of generality we can assume that $V'(Y)\in[\sigma,1/\sigma]$. 


\subsection{Global energy estimates}
We begin with the following energy estimates.
\begin{lemma}\label{BKG6}
We have the energy bounds for all $Z\in\R$,
\begin{equation}\label{BKG6}
\int_\R \langle Y\rangle^2|K(Y,Z)|^2\,dY\lesssim_\sigma \langle \alpha,Z\rangle^{1/2}, 
\end{equation}
and
\begin{equation}\label{BKG7}
\alpha\int_\R |K(Y,Z)|^2\,dY+\int_\R |\partial_YK(Y,Z)|^2\,dY\lesssim_\sigma \langle \alpha,Z\rangle^{-1/2}.
\end{equation}
Moreover, for $Z\in\R$,
\begin{equation}\label{BKG8}
|K(Z,Z)|\approx_{\sigma}\langle \alpha, Z\rangle^{-1/2}.
\end{equation}

\end{lemma}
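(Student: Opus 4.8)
The plan is to prove the three bounds \eqref{BKG6}, \eqref{BKG7}, \eqref{BKG8} by a sequence of weighted energy identities for the equation \eqref{BKG2}, exploiting that the coefficient $V(Y)$ is strictly monotone so that $V(Y)$ behaves like a linear function up to bounded multiplicative errors. Throughout I write $K(Y)=K(Y,Z)$ for fixed $Z$, and I will use that $K\in H^1(\R)$ with $K,\partial_YK\to 0$ as $|Y|\to\infty$ (this follows from the ODE \eqref{BKG2} away from $Y=Z$, where the operator $\partial_Y^2-\alpha+iV(Y)$ has no $L^2$ kernel: testing against $\bar K$ and taking real parts gives $\int|\partial_YK|^2+\alpha\int|K|^2=0$ for any homogeneous $L^2$ solution).

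First I would establish the basic identity obtained by multiplying \eqref{BKG2} by $\bar K(Y)$ and integrating: taking real and imaginary parts gives
\begin{equation*}
\int_\R|\partial_YK|^2\,dY+\alpha\int_\R|K|^2\,dY=-\,\mathrm{Re}\,K(Z),\qquad \int_\R V(Y)|K(Y)|^2\,dY=-\,\mathrm{Im}\,K(Z).
\end{equation*}
Next, to capture the $\langle\alpha,Z\rangle^{1/2}$ scaling, I would test \eqref{BKG2} against $(Y-Z)\overline{\partial_YK}$ (or a smooth truncation thereof, then pass to the limit) and take real parts; the term $\int(Y-Z)\,\mathrm{Re}(\bar K'\partial_Y^2K)$ integrates by parts to produce $\tfrac12\int|\partial_YK|^2$, the $\alpha$ term produces $\tfrac{\alpha}{2}\int|K|^2$, and crucially the term $\int(Y-Z)V(Y)\,\mathrm{Re}(\bar K'\,iK)=\tfrac12\int(Y-Z)V(Y)\,\partial_Y|K|^2=-\tfrac12\int\big(V(Y)+(Y-Z)V'(Y)\big)|K|^2$ contributes a term in which $(Y-Z)V'(Y)\ge \sigma|Y-Z|$ on the relevant side. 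Combined with the $\bar{K'}\delta(Y-Z)$ right-hand side (handled as a principal value / symmetric limit, yielding a multiple of $|K(Z)|\cdot|\partial_YK(Z\pm)|$), this yields a weighted bound $\int\langle Y-Z\rangle|\partial_YK|^2\lesssim|K(Z)|^2+\dots$. Iterating this scheme with higher weights $(Y-Z)^2\overline{\partial_YK}$ and using the model case comparison $V(Y)\approx \sigma(Y-Z)$ near the turning point should give both \eqref{BKG6} (the $\langle Y\rangle^2$ moment) and \eqref{BKG7} simultaneously, once one also tracks how $\langle\alpha,Z\rangle$ enters through the size of $V(Z)=$ (value of the coefficient at the source) and the Airy scaling $\langle\alpha,Y,Z\rangle^{1/3}$ implicit in balancing $\partial_Y^2$ against $V$.

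The lower bound $|K(Z,Z)|\gtrsim\langle\alpha,Z\rangle^{-1/2}$ in \eqref{BKG8} is the one requiring an extra idea: the upper bound follows from Cauchy–Schwarz, $|K(Z)|^2\le 2\|K\|_{L^2}\|\partial_YK\|_{L^2}$ plus \eqref{BKG7}, but the lower bound cannot come from a one-sided energy inequality. For this I would use the explicit structure of the jump: integrating \eqref{BKG2} across $Y=Z$ gives $\partial_YK(Z^+)-\partial_YK(Z^-)=1$, while $K$ is continuous at $Z$. Writing $K$ on $(-\infty,Z]$ and $[Z,\infty)$ in terms of the two decaying solutions $\phi_\pm$ of the homogeneous generalized Airy equation normalized at $Z$, one has $K(Z)=\phi_-(Z)\phi_+(Z)/W$ where $W=\phi_-\phi_+'-\phi_-'\phi_+$ is the Wronskian; a WKB/Airy-comparison estimate (or directly the a priori bound $|\phi_\pm'(Z)|\lesssim\langle\alpha,Z\rangle^{1/2}|\phi_\pm(Z)|$ from \eqref{BKG4}, which I would prove first in the homogeneous case by the same weighted energy method) gives $|W|\lesssim\langle\alpha,Z\rangle^{1/2}|\phi_-(Z)||\phi_+(Z)|$, hence $|K(Z)|\gtrsim\langle\alpha,Z\rangle^{-1/2}$.

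The main obstacle I anticipate is getting the precise power of $\langle\alpha,Z\rangle$ (rather than merely $\langle\alpha,Y,Z\rangle$ or something weaker) in \eqref{BKG6}–\eqref{BKG7}: the natural energy identities produce weights and factors of $V$ evaluated at the running variable $Y$, and one must carefully argue that the decay of $|K(Y)|$ away from both $Z$ and the turning point $Y_\ast$ (where $V(Y_\ast)=0$) confines the mass of $K$ to a region of size $\sim\langle\alpha,Z\rangle^{-1/3}$ around $Z$ on which $V(Y)\approx V(Z)$, so that the $Z$-dependence decouples cleanly. Equivalently, one must run the weighted estimates in the rescaled Airy variable and then undo the scaling; keeping track of the turning-point location versus the source location, and the two regimes $|Z|\lesssim\alpha^{1/2}$ versus $|Z|\gg\alpha^{1/2}$, is where the bookkeeping is delicate. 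Everything else — the integration by parts, the handling of the delta source by symmetric limits, the passage to the limit in truncations — is routine.
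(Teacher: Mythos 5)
Your overall architecture (energy identities with multipliers, then a jump condition at $Y=Z$ for the lower bound in \eqref{BKG8}) is the right one, and your lower-bound mechanism is essentially the paper's: the paper shows $|\partial_YK|\lesssim_\sigma\lambda$ near $Z$ when $|K(Z,Z)|=\lambda^2\langle\alpha,Z\rangle^{-1/2}$ and then uses that the delta source forces a unit jump in $\partial_YK$, which is the same content as your Wronskian bound. However, your central multiplier computation is wrong. Testing \eqref{BKG2} against $(Y-Z)\overline{\partial_YK}$ and taking real parts, the $iV$ term gives $\mathrm{Re}\big(iV(Y)K\overline{\partial_YK}\big)=-V(Y)\,\mathrm{Im}\big(K\overline{\partial_YK}\big)$, which is \emph{not} $\tfrac12V(Y)\partial_Y|K|^2$ (that would be the real part of $V K\overline{\partial_YK}$, without the $i$). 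The momentum density $\mathrm{Im}(K\overline{\partial_YK})$ carries no sign, so the coercive term $-\tfrac12\int\big(V+(Y-Z)V'\big)|K|^2$ you rely on never appears; and even if it did, $(Y-Z)V'(Y)$ is signed only on one side of $Z$. The paper's second multiplier is $V(Y)\overline{K}$: the \emph{imaginary} part of that identity produces $\int V^2(Y)|K|^2\,dY$ on the left, and since $|V(Y)|\geq\sigma|Y|$ this is exactly the $\langle Y\rangle^2$-weighted quantity of \eqref{BKG6}, controlled by $|V(Z)K(Z,Z)|$ plus a cross term absorbed by Poincar\'e. You should replace your virial step with this.

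A second, structural gap: your route to the upper bound $|K(Z,Z)|\lesssim\langle\alpha,Z\rangle^{-1/2}$ is circular and, where not circular, too weak. You invoke \eqref{BKG7}, whose right-hand side $\langle\alpha,Z\rangle^{-1/2}$ is precisely the bound on $|K(Z,Z)|$ one is trying to establish; and the global interpolation $|K(Z)|^2\le2\|K\|_{L^2}\|\partial_YK\|_{L^2}$ cannot produce the factor $\langle Z\rangle^{-1/2}$ when $\alpha$ is small and $|Z|$ large. The paper's proof is a bootstrap: first derive \eqref{BKG13}, i.e.\ $\int\langle Y\rangle^2|K|^2\lesssim\langle Z\rangle|K(Z,Z)|$ and $\alpha\int|K|^2+\int|\partial_YK|^2\lesssim|K(Z,Z)|$ with $|K(Z,Z)|$ still unknown on the right, then apply the \emph{localized} interpolation inequality \eqref{BKG14} on an interval of length $\langle\alpha,Z\rangle^{-1/2}$ about $Z$ to get $|K(Z,Z)|\lesssim_\sigma\langle\alpha,Z\rangle^{-1/4}|K(Z,Z)|^{1/2}$, which closes to $|K(Z,Z)|\lesssim\langle\alpha,Z\rangle^{-1/2}$ and is then fed back into \eqref{BKG13}. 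This self-improving step is the heart of the lemma and is missing from your proposal; without it you do not obtain the clean $\langle\alpha,Z\rangle$ powers you yourself flag as the delicate point.
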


\begin{proof}
Multiplying $\overline{K(Y,Z)}$ to \eqref{BKG2} and integrating over $\R$, we obtain that 
\begin{equation}\label{BKG9}
-\int_\R |\partial_YK(Y,Z)|^2 dY-\alpha\int_\R |K(Y,Z)|^2\,dY+i\int_\R V(Y)|K(Y,Z)|^2 dY=\overline{K(Z,Z)}.
\end{equation}
Multiplying $V(Y)\overline{K(Y,Z)}$ to \eqref{BKG2} and integrating over $\R$, we obtain that 
\begin{equation}\label{BKG10}
\begin{split}
&-\alpha\int_\R V(Y)|K(Y,Z)|^2\,dY-\int_\R V(Y)|\partial_YK(Y,Z)|^2 dY-\int_\R V'(Y) \partial_YK(Y,Z) \, \overline{K(Y,Z)}\,dY\\
&+i\int_\R V^2(Y)|K(Y,Z)|^2 dY=V(Z)\overline{K(Z,Z)}.
\end{split}
\end{equation}
The identities \eqref{BKG9}-\eqref{BKG10} imply that 
\begin{equation}\label{BKG11}
\begin{split}
&\int_\R |\partial_YK(Y,Z)|^2 dY+\alpha\int_\R |K(Y,Z)|^2\,dY\leq|K(Z,Z)|,\\
&\int_\R V^2(Y)|K(Y,Z)|^2 dY\lesssim_\sigma |V(Z)K(Z,Z)|+\int_\R |\partial_YK(Y,Z)||K(Y,Z)|\,dY.
\end{split}
\end{equation}
As a consequence of \eqref{BKG11} and Poincar\'e inequality, we obtain that 
\begin{equation}\label{BKG12}
\begin{split}
&\int_\R \langle Y\rangle^2|K(Y,Z)|^2\,dY\lesssim_\sigma \langle Z\rangle|K(Z,Z)|+\int_\R |\partial_YK(Y,Z)||K(Y,Z)|\,dY\\
&\leq \frac{1}{4}\int_\R\langle Y\rangle^2|K(Y,Z)|^2\,dY+C(\sigma)\Big[\int_\R \langle Y\rangle^{-2}|\partial_YK(Y,Z)|^2\,dY+\langle Z\rangle|K(Z,Z)|\Big],
\end{split}
\end{equation}
which in view of \eqref{BKG11} implies that 
\begin{equation}\label{BKG13}
\begin{split}
&\int_\R \langle Y\rangle^2|K(Y,Z)|^2dY\lesssim_\sigma \langle Z\rangle|K(Z,Z)|,\\
&\alpha\int_\R |K(Y,Z)|^2\,dY+\int_\R |\partial_YK(Y,Z)|^2dY\lesssim_\sigma |K(Z,Z)|.
\end{split}
\end{equation}
Using the inequality that for any $f\in H^1(I)$ on an interval $I\subseteq \R$,
\begin{equation}\label{BKG14}
\|f\|_{L^{\infty}(I)}\lesssim \|f\|_{L^2(I)}|I|^{-1/2}+\|f'\|_{L^2(I)}|I|^{1/2}, 
\end{equation}
we obtain from \eqref{BKG13} that
\begin{equation}\label{BKG15}
\begin{split}
|K(Z,Z)|&\lesssim \|\partial_YK(\cdot,Z)\|_{L^2(|Y-Z|\lesssim \langle Z\rangle^{-1/2})}\langle Z\rangle^{-1/4}+ \|K(\cdot,Z)\|_{L^2(|Y-Z|\lesssim \langle Z\rangle^{-1/2})}\langle Z\rangle^{1/4}\\
&\lesssim_\sigma \langle Z\rangle^{-1/4}|K(Z,Z)|^{1/2},
\end{split}
\end{equation}
and if $\alpha\ge1$
\begin{equation}\label{BKG15'}
\begin{split}
|K(Z,Z)|&\lesssim \|\partial_YK(\cdot,Z)\|_{L^2(|Y-Z|\lesssim\langle\alpha\rangle^{-1/2})}\langle \alpha\rangle^{-1/4}+\|K(\cdot,Z)\|_{L^2(|Y-Z|\lesssim\langle\alpha\rangle^{-1/2})}\langle \alpha\rangle^{1/4}\\
&\lesssim_\sigma \alpha^{-1/4}|K(Z,Z)|^{1/2}.
\end{split}
\end{equation}
Therefore 
\begin{equation}\label{BKG16}
|K(Z,Z)|\lesssim_\sigma \langle \alpha,Z\rangle^{-1/2}.
\end{equation}
The desired bounds \eqref{BKG6}-\eqref{BKG7} follow from \eqref{BKG13} and \eqref{BKG16}. 

It remains to prove \eqref{BKG8}. Suppose that for some $\lambda$ with $0<\lambda\lesssim_\sigma1$,
\begin{equation}\label{BKG17}
|K(Z,Z)|=\lambda^2\langle \alpha, Z\rangle^{-1/2}.
\end{equation}
It follows from inequality \eqref{BKG13}, estimates similar to the bounds \eqref{BKG15}-\eqref{BKG15'}, and the equation \eqref{BKG2} that 
\begin{equation}\label{BKG18}
|K(Y,Z)|\lesssim_\sigma \lambda \langle \alpha, Z\rangle^{-1/2}, \quad{\rm for}\,\,|Y-Z|\leq \langle \alpha,Z\rangle^{-1/2},
\end{equation}
and 
\begin{equation}\label{BKG19}
|\partial_Y^2K(Y,Z)|\lesssim_\sigma \lambda \langle \alpha, Z\rangle^{1/2}, \quad{\rm for}\,\,0\neq|Y-Z|\leq \langle \alpha, Z\rangle^{-1/2}.
\end{equation}
Therefore
\begin{equation}\label{BKG20}
|\partial_YK(Y,Z)|\lesssim_\sigma \lambda\quad{\rm for}\,\,|Y-Z|\leq \langle \alpha, Z\rangle^{-1/2}.
\end{equation}
In view of \eqref{BKG2} and the bound \eqref{BKG20}, we conclude that
\begin{equation}\label{BKG21}
\lambda\approx_\sigma 1.
\end{equation}
The lemma is proved.

\end{proof}

\subsection{Local-to-global energy estimates}
Denote for $A\in\R$, 
\begin{equation}\label{BKG25}
\beta_+(A,Z):=\|K(\cdot,Z)\|_{L^2([A, A+\langle \alpha,A\rangle^{-1/2}])},\quad \beta_-(A,Z):=\|K(\cdot,Z)\|_{L^2([A-\langle \alpha,A\rangle^{-1/2},A])}.
\end{equation}
We show that $\beta_+(A,Z)$ controls $K(Y,Z)$ for $Y\in[A,+\infty)$ and $\beta_-(A,Z)$ controls $K(Y,Z)$ for $Y\in(-\infty,A]$. These local-to-global bounds play an important role in our argument below. 

\begin{lemma}\label{BKGL1}
For $A, Z\in\R$, we have the following bounds.

(i) For $A\ge Z+\langle\alpha,Z\rangle^{-1/2}$, we have the bounds
\begin{equation}\label{BKGL2}
\int_{A}^\infty \frac{\langle\alpha, Y\rangle^2}{\langle\alpha, A\rangle}|K(Y,Z)|^2+|\partial_YK(Y,Z)|^2\,dY\lesssim_\sigma\langle\alpha,A\rangle (\beta_+(A,Z))^2;
\end{equation}

(ii) For $A\leq Z-\langle\alpha,Z\rangle^{-1/2}$, we have the bounds 
\begin{equation}\label{BKGL3}
\int_{-\infty}^A \frac{\langle\alpha, Y\rangle^2}{\langle\alpha, A\rangle}|K(Y,Z)|^2+|\partial_YK(Y,Z)|^2\,dY\lesssim_\sigma\langle\alpha,A\rangle  (\beta_-(A,Z))^2.
\end{equation}

\end{lemma}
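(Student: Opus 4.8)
The plan is to establish the one-sided decay estimate \eqref{BKGL2}; the bound \eqref{BKGL3} follows by the symmetric argument (reflecting $Y \mapsto -Y$). The strategy is a standard but delicate weighted energy estimate on the half-line $[A,\infty)$, using a well-chosen weight that degenerates at the left endpoint $Y = A$ so that all boundary terms produced at $A$ can be controlled by $\beta_+(A,Z)$ together with the local $L^2$ bound on $\partial_Y K$ near $A$ (which in turn is controlled by $\beta_+$ via \eqref{BKG14} applied on an interval of length $\langle\alpha,A\rangle^{-1/2}$ and the equation \eqref{BKG2}). Since $A \ge Z + \langle\alpha,Z\rangle^{-1/2}$, the point $Y=A$ lies strictly to the right of the source, so on the region $[A,\infty)$ the function $K(\cdot,Z)$ solves the homogeneous equation $\partial_Y^2 K - \alpha K + iV(Y) K = 0$, and there is no contribution from the delta function.

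First I would fix a weight $\chi(Y)$ supported on $[A,\infty)$, smooth, equal to a suitable power of $\langle\alpha,Y\rangle$ times a cutoff, normalized so that $\chi$ and its derivatives vanish to the right order at $Y = A$; concretely one wants something like $\chi(Y) \approx \langle\alpha,Y\rangle^{2}/\langle\alpha,A\rangle$ away from $A$ with a controlled transition. Then I would multiply the homogeneous equation by $\chi(Y)\overline{K(Y,Z)}$ and by $\chi(Y)V(Y)\overline{K(Y,Z)}$, integrate over $[A,\infty)$, and take real and imaginary parts — exactly parallel to the pairing arguments \eqref{BKG9}–\eqref{BKG11} in the proof of Lemma \ref{BKG6}, but now localized. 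The real part of the first pairing controls $\int \chi |\partial_Y K|^2 + \alpha\int\chi|K|^2$ up to the boundary term at $A$ and a term $\int \chi'' |K|^2$ coming from integrating $\chi \partial_Y^2 K$ by parts twice; the imaginary part, combined with the second pairing (which brings in $V'(Y) \in [\sigma,1/\sigma]$ to absorb the weight growth, just as in \eqref{BKG10}–\eqref{BKG11}), lets one trade powers of $\langle\alpha,Y\rangle$: the non-stationarity $|V'| \ge \sigma$ is precisely what converts $\int V^2 \chi |K|^2$-type quantities into $\int \langle\alpha,Y\rangle^2 \chi |K|^2$ control. Iterating or bootstrapping the weight exponent (or choosing $\chi$ with the full power from the start and handling the $\chi''$ error by Poincaré/absorption, as in \eqref{BKG12}) then yields $\int_A^\infty \langle\alpha,Y\rangle^2\langle\alpha,A\rangle^{-1}|K|^2 + |\partial_Y K|^2 \lesssim_\sigma (\text{boundary data at } A)$.

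The final step is to bound the boundary data at $Y = A$ by $\langle\alpha,A\rangle (\beta_+(A,Z))^2$. The boundary terms are of the form $\chi(A)|K(A,Z)|^2$, $\chi(A) \partial_Y K(A,Z)\overline{K(A,Z)}$, and similar, so I need pointwise control of $|K(A,Z)|$ and $|\partial_Y K(A,Z)|$. For $|K(A,Z)|$: apply \eqref{BKG14} on the interval $I = [A, A+\langle\alpha,A\rangle^{-1/2}]$ to get $|K(A,Z)| \lesssim \langle\alpha,A\rangle^{1/4}\beta_+(A,Z) + \langle\alpha,A\rangle^{-1/4}\|\partial_Y K\|_{L^2(I)}$, and then bound $\|\partial_Y K\|_{L^2(I)}$ by $\langle\alpha,A\rangle^{1/2}\beta_+(A,Z)$ using the equation \eqref{BKG2} (integrate the homogeneous ODE against $\overline{K}$ over $I$, controlling the $V$-term crudely by $\langle\alpha,A\rangle$ on $I$, plus an interior-to-boundary interpolation) — this is the same mechanism as \eqref{BKG15}. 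Once this is in hand one gets $|K(A,Z)| \lesssim_\sigma \langle\alpha,A\rangle^{1/4}\beta_+(A,Z)$ and $|\partial_Y K(A,Z)| \lesssim_\sigma \langle\alpha,A\rangle^{3/4}\beta_+(A,Z)$, so that $\chi(A)$-weighted boundary terms (with $\chi(A) \approx \langle\alpha,A\rangle$ or smaller, by the choice of weight) are all $\lesssim_\sigma \langle\alpha,A\rangle \cdot \langle\alpha,A\rangle^{1/2}(\beta_+)^2 \cdot \langle\alpha,A\rangle^{1/2}$... — here one must be careful with the exact powers, and this is exactly why the weight $\chi$ is normalized with the $1/\langle\alpha,A\rangle$ factor: it is tuned so the boundary contributions close at precisely $\langle\alpha,A\rangle(\beta_+(A,Z))^2$. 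The main obstacle I anticipate is this bookkeeping of weights and powers of $\langle\alpha,A\rangle$ at the endpoint — making sure the degenerating weight kills the boundary terms at the right rate while still being comparable to $\langle\alpha,Y\rangle^2/\langle\alpha,A\rangle$ on the bulk of $[A,\infty)$, and correctly handling the far region $Y \gg A$ where $\langle\alpha,Y\rangle$ dominates so one can freely absorb lower-order errors. Everything else is a routine adaptation of the global estimates already carried out in Lemma \ref{BKG6}.
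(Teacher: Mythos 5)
Your proposal is correct and follows essentially the same route as the paper: localized versions of the two energy pairings (against $\chi\overline{K}$ and $\chi V\overline{K}$) from Lemma \ref{BKG6}, with the weight degenerating at $Y=A$ so the endpoint contribution reduces to the transition region $[A,A+\langle\alpha,A\rangle^{-1/2}]$, where $\|\partial_YK\|_{L^2}\lesssim\langle\alpha,A\rangle^{1/2}\beta_+$ is recovered from the equation, and the $\langle\alpha,Y\rangle^2$ growth comes from the $V$-pairing plus Poincar\'e. The paper's only cosmetic difference is that it takes the cutoff $\varphi$ bounded (identically $1$ outside the transition zone) rather than polynomially growing, exactly the variant you mention at the end of your second paragraph, which sidesteps the boundary-term bookkeeping you worry about.
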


\begin{proof}
We focus on the proof of \eqref{BKGL2} and assume that $Z\leq0$, the case of $Z\ge0$ and the proof of \eqref{BKGL3} being similar. For $A\ge Z+\langle\alpha,Z\rangle^{-1/2}$, we choose a cutoff function $\varphi\in C_c^\infty((A,+\infty))$ such that $\varphi\equiv 1$ on $[A+\langle \alpha, A\rangle^{-1/2}, +\infty)$ and $|\varphi'|\lesssim \langle\alpha,A\rangle^{1/2}$. By multiplying $\varphi^2(Y)\overline{K(Y,Z)}$ and $\varphi^2(Y)V(Y)\overline{K(Y,Z)}$ to equation \eqref{BKG2}, and integrating over $\R$  we obtain that 
\begin{equation}\label{BKG22}
\begin{split}
&-\int_\R \varphi^2(Y)|\partial_YK(Y,Z)|^2\,dY-\alpha\int_\R\varphi^2(Y) |K(Y,Z)|^2dY\\
&-2\int_\R \varphi(Y)\varphi'(Y)\partial_YK(Y,Z) \overline{K(Y,Z)}\,dY+i\int_\R\varphi^2(Y) V(Y)|K(Y,Z)|^2\,dY=0,
\end{split}
\end{equation}
and
\begin{equation}\label{BKG23}
\begin{split}
&-\alpha\int_\R \varphi^2(Y)V(Y)|K(Y,Z)|^2dY-\int_\R \varphi^2(Y)V(Y)|\partial_YK(Y,Z)|^2dY\\
&-\int_\R\Big[2 \varphi(Y)\varphi'(Y)V(Y)+\varphi^2(Y)V'(Y)\Big]\partial_YK(Y,Z) \overline{K(Y,Z)}\,dY\\
&+i\int_\R \varphi^2(Y)V^2(Y)|K(Y,Z)|^2\,dY=0.
\end{split}
\end{equation}
We get from \eqref{BKG22}-\eqref{BKG23} that
\begin{equation}\label{BKG24}
\begin{split}
&\alpha\int_\R \varphi^2(Y)|K(Y,Z)|^2dY+\int_\R \varphi^2(Y)|\partial_YK(Y,Z)|^2\,dY\\
&\leq 2\int_\R \varphi(Y)\big|\varphi'(Y)||\partial_YK(Y,Z) \overline{K(Y,Z)}|\,dY,\\
&\int_\R V^2(Y)\varphi^2(Y)|K(Y,Z)|^2\,dY\\
&\leq \int_\R \Big[2|\varphi(Y)\varphi'(Y)V(Y)|+\varphi^2(Y)|V'(Y)|\Big]\big|\partial_YK(Y,Z) \overline{K(Y,Z)}\big|\,dY.
\end{split}
\end{equation}
It follows from equation \eqref{BKG2} and the definition \eqref{BKG25} that
\begin{equation}\label{BKG26}
\|\partial_Y^2K(Y,Z)\|_{L^2(Y\in[A, A+\langle \alpha,A\rangle^{-1/2}])}\lesssim_\sigma \langle\alpha,A\rangle\beta_+(A,Z),
\end{equation}
and thus
\begin{equation}\label{BKG27}
\|\partial_YK(Y,Z)\|_{L^2(Y\in[A, A+\langle \alpha,A\rangle^{-1/2}])}\lesssim_\sigma \langle\alpha,A\rangle^{1/2}\beta_+(A,Z).
\end{equation}
We then conclude from \eqref{BKG24} that
\begin{equation}\label{BKG28}
\alpha\int_\R\varphi^2(Y) |K(Y,Z)|^2dY+\int_\R \varphi^2(Y)|\partial_YK(Y,Z)|^2\,dY\lesssim_\sigma \langle\alpha,A\rangle(\beta_+(A,Z))^2,
\end{equation}
and from \eqref{BKG24}, \eqref{BKG28}, and using also Poincar\'e inequality that
\begin{equation}\label{BKG29}
\begin{split}
&\int_\R \varphi^2(Y)\langle Y\rangle^2|K(Y,Z)|^2\,dY\\
&\leq \frac{1}{4}\int_\R \varphi^2(Y)\langle Y\rangle^2|K(Y,Z)|^2dY+C(\sigma)\Big[\int_\R\frac{\varphi^2(Y)}{\langle Y\rangle^2}|\partial_YK(Y,Z)|^2dY+\langle\alpha,A\rangle^2\big(\beta_+(A,Z)\big)^2\Big],
\end{split}
\end{equation}
which implies that
\begin{equation}\label{BKG30}
\int_\R\varphi^2(Y) \langle Y\rangle^2|K(Y,Z)|^2\,dY\lesssim_\sigma \langle\alpha,A\rangle^2\big(\beta_+(A,Z)\big)^2.
\end{equation}
The desired bounds \eqref{BKGL2} follow from \eqref{BKG28} and \eqref{BKG30}.
\end{proof}

\subsection{Entanglement inequalities}
The following ``entanglement inequalities" play an important role in our argument.
\begin{lemma}\label{BKGL31}
Assume that $A_1, A_2\in\R$ satisfy the conditions that $A_1\leq A_2$, and either $[A_1, A_2]\subseteq [0,\infty)$ or $[A_1, A_2]\subseteq (-\infty,0]$. Fix a nonnegative cutoff function $\varphi\in C^1([A_1,A_2])$ such that $\varphi(A_1)=\varphi(A_2)=0$. We extend the domain of $\varphi$ to $\R$ by setting $\varphi(A)=0$ for $A\not\in[A_1,A_2]$. Then we have the following bounds for some $c_0:=c_0(\sigma)\in(0,1)$,
\begin{equation}\label{BKGL32}
\int_\R\Big[|\varphi'(Y)|^2-c^2_0\,\langle\alpha, Y\rangle|\varphi(Y)|^2\Big]|K(Y,Z)|^2\,dY\ge0.
\end{equation}
\end{lemma}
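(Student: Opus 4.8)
The plan is to derive the inequality \eqref{BKGL32} from the differential equation \eqref{BKG2} by testing against $\varphi^2 \overline{K}$ and against $\varphi^2 V \overline{K}$, exactly in the spirit of the energy identities \eqref{BKG22}--\eqref{BKG24} used in Lemma \ref{BKGL1}, but now keeping careful track of the sign of the terms that survive. First I would multiply \eqref{BKG2} by $\varphi^2(Y)\overline{K(Y,Z)}$ and integrate over $\R$; since $\varphi$ is supported inside $[A_1,A_2]$ and vanishes at the endpoints there are no boundary terms, and taking real parts gives
\begin{equation*}
\int_\R \varphi^2|\partial_YK|^2\,dY + \alpha\int_\R \varphi^2|K|^2\,dY = -2\,{\rm Re}\int_\R \varphi\varphi'\,\partial_YK\,\overline{K}\,dY,
\end{equation*}
because the $\int \varphi^2 V |K|^2$ term is purely imaginary and the right side of \eqref{BKG2} contributes nothing (we may assume $Z\notin (A_1,A_2)$; the case $Z\in(A_1,A_2)$ is handled by an identical computation with an extra term $\varphi^2(Z)\overline{K(Z,Z)}$ which is harmless for the lower bound since it only adds a nonnegative quantity, or one simply notes \eqref{BKGL32} for $Z$ outside suffices by continuity/approximation). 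This already controls $\int \varphi^2 |\partial_Y K|^2$ by $\int |\varphi'|^2|K|^2$ plus a small multiple of $\int\varphi^2|\partial_YK|^2$, hence
\begin{equation*}
\int_\R \varphi^2|\partial_YK|^2\,dY \lesssim_\sigma \int_\R |\varphi'|^2|K|^2\,dY.
\end{equation*}

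Next I would multiply \eqref{BKG2} by $\varphi^2(Y)V(Y)\overline{K(Y,Z)}$, integrate, and take real parts. The $V^2$ term is again imaginary; the surviving real part is
\begin{equation*}
\int_\R \varphi^2 V|\partial_YK|^2\,dY + \alpha\int_\R\varphi^2 V|K|^2\,dY = -\,{\rm Re}\int_\R\big(2\varphi\varphi'V+\varphi^2 V'\big)\partial_YK\,\overline{K}\,dY.
\end{equation*}
Since $V'(Y)\in[\sigma,1/\sigma]$ and $V(0)=0$, we have $|V(Y)|\le |Y|/\sigma$ on either half-line, so $\int\varphi^2 V'|K|^2 \ge \sigma\int\varphi^2|K|^2$, while the terms $\int\varphi^2 V|\partial_YK|^2$, $\alpha\int\varphi^2 V|K|^2$ and $\int\varphi\varphi'V\partial_YK\overline K$ all carry an extra factor of size $|V(Y)|\lesssim_\sigma \langle Y\rangle\le \langle\alpha,Y\rangle$. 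Combining this with the first identity and using $\langle \alpha,Y\rangle \le \alpha + \langle Y\rangle \lesssim \alpha + 1 + |V(Y)|/\sigma$, together with Cauchy--Schwarz $|\partial_YK\,\overline K|\le \tfrac12(\varepsilon^{-1}|\partial_YK|^2 + \varepsilon\langle\alpha,Y\rangle|K|^2)$ applied with a suitably small $\varepsilon=\varepsilon(\sigma)$, I would arrive at an estimate of the form
\begin{equation*}
\int_\R \langle\alpha,Y\rangle\,\varphi^2|K|^2\,dY \lesssim_\sigma \int_\R |\varphi'|^2|K|^2\,dY + \tfrac12\int_\R \langle\alpha,Y\rangle\varphi^2|K|^2\,dY,
\end{equation*}
where the half on the right is absorbed. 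This yields $\int \langle\alpha,Y\rangle\varphi^2|K|^2 \le C(\sigma)\int|\varphi'|^2|K|^2$, which is precisely \eqref{BKGL32} with $c_0^2 = 1/C(\sigma)$.

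The main obstacle, and the place requiring the most care, is bookkeeping the signs and the $\langle\alpha,Y\rangle$ versus $|V(Y)|$ comparisons so that the bad terms (those with $\partial_YK$, and the term with $\alpha\varphi^2V|K|^2$ when $V$ could a priori be negative on $(-\infty,0]$) are genuinely dominated by the good coercive term $\sigma\int\varphi^2|K|^2$ coming from $V'$; this is why the hypothesis $[A_1,A_2]$ lies entirely in $[0,\infty)$ or entirely in $(-\infty,0]$ matters — it fixes the sign of $V$ on the support of $\varphi$ and lets one write $\langle Y\rangle \lesssim_\sigma 1 + |V(Y)|$ cleanly. One should also double-check that the weight $\langle\alpha,Y\rangle$ (not merely $\langle Y\rangle$) is recovered, which forces one to retain the $\alpha\int\varphi^2 V'|K|^2$-type contributions and not discard the $\alpha$-terms prematurely; tracking $\alpha$ through both identities simultaneously is the delicate part. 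Everything else is routine integration by parts and Cauchy--Schwarz, with all constants depending only on $\sigma$.
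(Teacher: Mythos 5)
Your first identity (the real part of \eqref{BKG2} tested against $\varphi^2\overline K$) is correct and matches the paper, but the heart of the lemma --- producing the coercive quantity $\int_\R\langle\alpha,Y\rangle\varphi^2|K|^2\,dY$ on the good side of the inequality --- is not actually derived in your scheme. You invoke a term $\int_\R\varphi^2V'|K|^2\,dY\ge\sigma\int_\R\varphi^2|K|^2\,dY$ supposedly coming from the real part of the identity obtained by testing against $\varphi^2V\overline K$; no such term appears there. What appears is the cross term ${\rm Re}\int_\R\varphi^2V'\,\partial_YK\,\overline K\,dY$, which has no sign (and integrating it by parts once more would require $V''$, which is unavailable for $V\in C^1$). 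Moreover the left-hand side of that second identity, $\int_\R\varphi^2V|\partial_YK|^2\,dY+\alpha\int_\R\varphi^2V|K|^2\,dY$, is $\le 0$ when $[A_1,A_2]\subseteq(-\infty,0]$ and $V'>0$, so it cannot serve as a coercive term either. As written, your two real-part identities control $\int\varphi^2|\partial_YK|^2$ and $\alpha\int\varphi^2|K|^2$ by $\int|\varphi'|^2|K|^2$, but nothing bounds $\int\langle Y\rangle\varphi^2|K|^2$, so in your final display the term you propose to absorb is never generated in the first place.

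The missing ingredient is the \emph{imaginary} part of the first identity: testing \eqref{BKG2} against $\varphi^2\overline K$ and taking imaginary parts gives $\int_\R\varphi^2V|K|^2\,dY=2\,{\rm Im}\int_\R\varphi\varphi'\,\partial_YK\,\overline K\,dY$. This is exactly where the hypothesis that $[A_1,A_2]$ lies in a single half-line enters: $V$ then has a fixed sign on ${\rm supp}\,\varphi$, so $\big|\int\varphi^2V|K|^2\big|=\int\varphi^2|V||K|^2\ge\sigma\int\varphi^2|Y||K|^2$ (using $V(0)=0$ and $|V'|\ge\sigma$), and the right-hand side is absorbed by Cauchy--Schwarz as in your first step. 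Your stated reason for the hypothesis --- that it permits $\langle Y\rangle\lesssim_\sigma 1+|V(Y)|$ --- is not where it is needed, since that inequality holds regardless of the sign of $V$; the sign condition is needed to prevent cancellation inside $\int\varphi^2V|K|^2$. Combined with a Poincar\'e/interpolation step to recover the unweighted $\int\varphi^2|K|^2$ and the elementary bound $\langle\alpha,Y\rangle\lesssim\alpha+\langle Y\rangle$, this yields \eqref{BKG32} and hence \eqref{BKGL32}; this is the paper's route. A smaller point: your treatment of $Z\in(A_1,A_2)$ is also incorrect --- the extra term is $\varphi^2(Z)\overline{K(Z,Z)}$ with ${\rm Re}\,K(Z,Z)\le0$ by \eqref{BKG9}, so it lands on the unfavorable side of the real-part estimate and is not "harmless"; in the paper's applications this case never arises because $[A_1,A_2]$ is always chosen to avoid $Z$.
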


\begin{remark}
In applications below, we shall choose the cutoff function $\varphi$ so that $|\varphi'(Y)|^2-c_0^2\,\langle\alpha,Y\rangle|\varphi(Y)|^2=0$ on $[A_1, A_2]$ except when $$Y=A_1+O(\langle\alpha, A_1\rangle^{-1/2})\quad{\rm or}\quad Y=A_2+O(\langle\alpha, A_2\rangle^{-1/2}).$$ Then \eqref{BKGL32} implies the control of $K(Y, Z)$ near $A_1$ by $K(Y,Z)$ around $A_2$, and vice versa. Since $[A_1,A_2]$ is a possibly very large interval, there is an ``entanglement" in the behavior of $K(Y,Z)$ near the two end points $Y=A_1$ and $Y=A_2$, which inspires our choice of the terminology.
\end{remark}

\begin{proof}
Using energy estimates similar to \eqref{BKG22} and noting that $V$ does not change sign on $[A_1,A_2]$, we obtain that 
\begin{equation}\label{BKG31}
\begin{split}
&\alpha\int_\R\varphi^2(Y)|K(Y,Z)|^2dY+\int_\R \varphi^2(Y)|\partial_YK(Y,Z)|^2\,dY+\Big|\int_\R \varphi^2(Y)V(Y)|K(Y,Z)|^2\,dY\Big|\\
&\lesssim\int_\R \varphi(Y)\big|\varphi'(Y)\partial_YK(Y,Z) \overline{K(Y,Z)}\big|\,dY.
\end{split}
\end{equation}
Therefore,
\begin{equation}\label{BKG32}
\begin{split}
&\alpha\int_\R\varphi^2(Y)|K(Y,Z)|^2dY+\int_\R \varphi^2(Y)\langle Y\rangle|K(Y,Z)|^2\,dY+\int_\R \varphi^2(Y)|\partial_YK(Y,Z)|^2\,dY\\
&\lesssim_\sigma\int_\R |\varphi'(Y)|^2|K(Y,Z)|^2\,dY.
\end{split}
\end{equation}
It follows from \eqref{BKG32} that for some $c_0(\sigma)\in(0,1)$,
\begin{equation}\label{BKG33}
\int_\R\Big[|\varphi'(Y)|^2-c^2_0\,\langle\alpha, Y\rangle|\varphi(Y)|^2\Big]|K(Y,Z)|^2\,dY\ge0.
\end{equation}
\end{proof}

\subsection{Proof of \eqref{BKG3}-\eqref{BKG4} for $\langle \alpha\rangle^{-1/2}\ll_\sigma |Z|$} We now turn to the proof of \eqref{BKG3}-\eqref{BKG4} in the case $\langle \alpha\rangle^{-1/2}\ll_\sigma |Z|$. We assume, without loss of generality, that $Z\ge0$ and consider several cases. 

{\it Case I: $|Y-Z|\lesssim_\sigma \langle\alpha, Z\rangle^{-1/2}$.} The desired bounds \eqref{BKG3}-\eqref{BKG4} follow from the bounds \eqref{BKG7}-\eqref{BKG8} and equation \eqref{BKG2}, if $c_0(\sigma)\in(0,1)$ is chosen sufficiently small.  

{\it Case II: $\langle \alpha\rangle^{-1/2}\ll_\sigma Y<Z-10\langle \alpha, Z\rangle^{-1/2}$.} We shall use \eqref{BKGL32} and need to give a more precise description on the function $\varphi$. Fix $1\ll_\sigma\ell$ to be determined below. Let $A_1:=Y-\ell\langle\alpha,Y\rangle^{-1/2}$, $A_2:=Z-\langle\alpha,Z\rangle^{-1/2}$. We choose $\varphi$ so that 
\begin{equation}\label{BKG34}
\begin{split}
&\varphi'(X)=-c_0\langle\alpha, X\rangle^{1/2}\varphi,\quad {\rm for}\,\,X\in\big[Y+\langle\alpha, Y\rangle^{-1/2},Z-2\langle\alpha,Z\rangle^{-1/2}\big],\\
&\varphi(Y-\ell\langle\alpha, Y\rangle^{-1/2})=\varphi(Z-\langle\alpha,Z\rangle^{-1/2})=0,\quad \varphi(Z-2\langle\alpha,Z\rangle^{-1/2})=1,\\
&\varphi''(X)\equiv0,\quad{\rm for}\,\,X\in\big[Y-\ell\langle\alpha, Y\rangle^{-1/2}, Y+ \langle\alpha, Y\rangle^{-1/2}\big]\cup \big[Z-2\langle\alpha,Z\rangle^{-1/2},Z-\langle\alpha,Z\rangle^{-1/2}\big].
\end{split}
\end{equation}
It follows from \eqref{BKG34} that for some $c_1=c_1(\sigma)\in(0,1)$
\begin{equation}\label{BKG34.1}
m_\ast:=\varphi(Y+\langle\alpha, Y\rangle^{-1/2})\gtrsim_\sigma e^{c_1\langle\alpha,Z\rangle^{1/2}|Y-Z|}.
\end{equation}
Moreover, assuming that $Y\gg \ell \langle\alpha\rangle^{1/2}$, from straightforward computation we can conclude that for a suitable $c_2=c_2(\sigma)\in(0,1)$,
\begin{equation}\label{BKG34.2}
c_0\,\langle\alpha, X\rangle|\varphi(X)|^2-|\varphi'(X)|^2\ge0,\quad {\rm for}\,\,X\in\big[ Y-(\ell-c_2^{-1/2})\langle\alpha, Y\rangle^{-1/2},Y+\langle\alpha, Y\rangle^{-1/2}\big],
\end{equation}
and
\begin{equation}\label{BKG34.3}
c_0\,\langle\alpha, X\rangle|\varphi(X)|^2-|\varphi'(X)|^2\gtrsim c_2 \langle\alpha, Y\rangle m_\ast,\quad {\rm for}\,\,X\in\big[ Y-(\ell/2)\langle\alpha, Y\rangle^{-1/2},Y+\langle\alpha, Y\rangle^{-1/2}\big].
\end{equation}
Therefore, assuming that $Y\gg\ell\langle\alpha\rangle^{-1/2}$, we see from \eqref{BKGL32} and \eqref{BKG34.1}-\eqref{BKG34.3} that
\begin{equation}\label{BKG36}
\begin{split}
&m_\ast\int_{Y-(\ell/2)\langle\alpha, Y\rangle^{-1/2}}^{Y+\langle\alpha, Y\rangle^{-1/2}}\langle\alpha, Y\rangle|K(X,Z)|^2\,dX-C(\sigma)m_\ast\int_{Y-\ell\langle\alpha, Y\rangle^{-1/2}}^{Y-(\ell-c_1)\langle\alpha, Y\rangle^{-1/2}}\langle\alpha, Y\rangle|K(X,Z)|^2\,dX\\
&\lesssim\int_{Y-\ell\langle\alpha, Y\rangle^{-1/2}}^{Y+ \langle\alpha, Y\rangle^{-1/2}}\big(c_0\langle \alpha,X\rangle|\varphi(X)|^2-|\varphi'(X)|^2\big)|K(X,Z)|^2\,dX\\
&\lesssim_\sigma\int_{Z-2\langle\alpha,Z\rangle^{-1/2}}^{Z-\langle\alpha,Z\rangle^{-1/2}}\big(\langle X\rangle|\varphi(X)|^2+|\varphi'(X)|^2\big)|K(X,Z)|^2\,dX\lesssim_\sigma \langle\alpha,Z\rangle^{-1/2}.
\end{split}
\end{equation}
If we choose $\ell\ge1$ to be sufficiently large depending on $\sigma$, so that by the local-to-global energy bounds \eqref{BKGL3}, we have
\begin{equation}\label{BKG36.5}
\int_{Y-(\ell/2)\langle\alpha, Y\rangle^{-1/2}}^{Y+\langle\alpha, Y\rangle^{-1/2}}\langle\alpha, Y\rangle|K(X,Z)|^2\,dX\ge 2C(\sigma)\int_{Y-\ell\langle\alpha, Y\rangle^{-1/2}}^{Y-(\ell-c_1)\langle\alpha, Y\rangle^{-1/2}}\langle\alpha, Y\rangle|K(X,Z)|^2\,dX,
\end{equation}
 and therefore, by \eqref{BKG36}, for a suitable $c_3(\sigma)\in(0,1)$,
\begin{equation}\label{BKG37}
\int_{Y-\langle\alpha, Y\rangle^{-1/2}}^{Y+ \langle\alpha, Y\rangle^{-1/2}}|K(X,Z)|^2\,dX\lesssim_\sigma \frac{1}{\langle\alpha,Y\rangle}\langle\alpha,Z\rangle^{-1/2}e^{-c_3\langle\alpha, Z\rangle^{1/2}|Y-Z|}.
\end{equation}
The desired bounds \eqref{BKG3}-\eqref{BKG4} then follow from \eqref{BKG37}, using the equation \eqref{BKG2}. 

{\it Case III: $|Y|\lesssim_\sigma \langle\alpha\rangle^{-1/2}$.} The desired bounds \eqref{BKG3}-\eqref{BKG4} follow from {\it Case II} and the local-to-global energy bounds \eqref{BKGL3}, using the equation \eqref{BKG2}.

{\it Case IV: $Y<0$ and $\langle\alpha\rangle^{-1/2}\ll_\sigma |Y|$.} This case is similar to {\it Case II}, using the entanglement inequality \eqref{BKGL32} by choosing the cutoff function $\varphi$ so that (with sufficiently large $\ell\gg_\sigma1$ and $A_1:=Y-\ell\langle\alpha,Y\rangle^{-1/2}$, $A_2:=-\langle\alpha\rangle^{-1/2}$)
\begin{equation}\label{BKG38}
\begin{split}
&\varphi'(X)=-c_0\langle\alpha, X\rangle^{1/2}\varphi,\quad {\rm for}\,\,X\in\big[Y+\langle\alpha, Y\rangle^{-1/2},-2\langle\alpha\rangle^{-1/2}\big],\\
&\varphi(Y-\ell\langle\alpha, Y\rangle^{-1/2})=\varphi(-\langle\alpha\rangle^{-1/2})=0,\quad \varphi(-2\langle\alpha\rangle^{-1/2})=1,\\
&\varphi''(X)\equiv0,\quad{\rm for}\,\,X\in\big[Y-\ell\langle\alpha, Y\rangle^{-1/2}, Y+ \langle\alpha, Y\rangle^{-1/2}\big]\cup \big[-2\langle\alpha\rangle^{-1/2},-\langle\alpha\rangle^{-1/2}\big].
\end{split}
\end{equation}

{\it Case V: $\langle\alpha, Z\rangle^{-1/2}\ll_\sigma Y-Z$.} This case is also similar to {\it Case II}, using the entanglement inequality \eqref{BKGL32} by choosing the cutoff function $\varphi$ so that (with sufficiently large $\ell\gg1$ and $A_1:=Z+\langle\alpha,Z\rangle^{-1/2}$, $A_2:=Y+\ell\langle\alpha,Y\rangle^{-1/2}$)
\begin{equation}\label{BKG39}
\begin{split}
&\varphi'(X)=c_0\langle\alpha, X\rangle^{1/2}\varphi,\quad {\rm for}\,\,X\in\big[Z+2\langle\alpha,Z\rangle^{-1/2}, Y-\langle\alpha, Y\rangle^{-1/2}\big],\\
&\varphi(Y+\ell\langle\alpha, Y\rangle^{-1/2})=\varphi(Z+\langle\alpha,Z\rangle^{-1/2})=0,\quad \varphi(Z+2\langle\alpha,Z\rangle^{-1/2})=1,\\
&\varphi''(X)\equiv0,\quad{\rm for}\,\,X\in\big[Y-\langle\alpha, Y\rangle^{-1/2}, Y+ \ell\langle\alpha, Y\rangle^{-1/2}\big]\cup \big[Z+\langle\alpha,Z\rangle^{-1/2},Z+2\langle\alpha,Z\rangle^{-1/2}\big].
\end{split}
\end{equation}

Summarizing {\it Case I} to {\it Case V}, the proof of \eqref{BKG3}-\eqref{BKG4} is then complete, assuming that $\langle\alpha\rangle^{-1/2}\ll_\sigma|Z|$. 

\subsection{Proof of \eqref{BKG3}-\eqref{BKG4} for $|Z|\lesssim_\sigma\langle \alpha\rangle^{-1/2}$}  We note that the case $|Z|\lesssim_\sigma \langle\alpha\rangle^{-1/2}$ is easier, by treating the cases $|Y|\lesssim_\sigma \langle\alpha\rangle^{-1/2}$, $\langle\alpha\rangle^{-1/2}\ll_\sigma Y$ and $\langle\alpha\rangle^{-1/2}\ll_\sigma-Y$ separately.

%
%

\section{High Reynolds number Gevrey bounds for the generalized Airy operator}\label{hrn}
In this section, we prove precise estimates on the kernel of the generalized Airy operator in Gevrey spaces, in the high Reynolds number regime. To obtain the refined regularity structure in the study of the main Orr-Sommerfeld equation \eqref{intP8} in $v,w$ variables, for $0<|\epsilon|<1/9$ we define $k_\epsilon(v,\rho;w)$ as the solution to the equation for $v,\rho, w\in\R$ that
\begin{equation}\label{GKA6}
\epsilon\partial_v^2k_\epsilon(v,\rho;w)+\epsilon\frac{\partial_vB^\ast(v+w)}{B^\ast(v+w)}\partial_vk_\epsilon(v,\rho;w)-i\frac{v}{(B^\ast(v+w))^2}k_\epsilon(v,\rho;w)=(\rho+i\epsilon^{1/3})\delta(v-\rho).
\end{equation}
Fix a Gevrey smooth function $\Psi$ satisfying
\begin{equation}\label{GKA10}
\Psi\in C_0^\infty(-4,4),\quad \Psi\equiv 1\,\, {\rm on}\,\, [-2,2], \quad\sup_{\xi\in\R}\Big[e^{\langle \xi\rangle^{3/4}}\widehat{\,\Psi\,}(\xi)\Big]\lesssim1. 
\end{equation}
and define the Fourier multiplier operator $A$ for any $f\in L^2(\R)$, as
\begin{equation}\label{GKA10.1}
\widehat{\,Af\,}(\xi):=e^{\delta_0\langle \xi\rangle^{1/2}}, \qquad {\rm for}\,\,\xi\in\R. 
\end{equation}
Our main goal of the section is to prove the following bounds on the kernel $k_\epsilon(v,\rho;w)$. 

 \begin{proposition}\label{GKA11}
 Assume that $\epsilon\in(-1/8,1/8)\backslash\{0\}$ and $k\in\Z\backslash\{0\}$. Let $w_0\in\R$ and $\Psi$ be as in \eqref{GKA10}. Denote for $v,\rho,w\in\R$,
 \begin{equation}\label{GKA12}
 k_{\epsilon,w_0}(v,\rho;w):=\Psi(w-w_0)k_\epsilon(v,\rho;w).
 \end{equation}
 Then for suitable $\delta_0\in(0,1)$, we have the following bounds.
 
 (i) For all $v,\rho\in\R$, 
 \begin{equation}\label{GKA13}
\sup_{\xi\in\R} \Big|e^{\delta_0\langle\xi\rangle^{1/2}}\widehat{\,\,k_{\epsilon,w_0}}(v,\rho;\cdot)(\xi)\Big|\lesssim_\sigma |\epsilon|^{-1/3}\langle \epsilon^{-1/3}\rho\rangle^{1/2}e^{-\delta_0\langle\epsilon^{-1/3}v,\epsilon^{-1/3}\rho\rangle^{1/2}\epsilon^{-1/3}|v-\rho|},
 \end{equation}
 and
  \begin{equation}\label{GKA14}
 \sup_{\xi\in\R} \Big|e^{\delta_0\langle\xi\rangle^{1/2}}\partial_v\widehat{\,\,k_{\epsilon,w_0}}(v,\rho;\cdot)(\xi)\Big|\lesssim_\sigma |\epsilon|^{-2/3}\langle \epsilon^{-1/3}\rho\rangle e^{-\delta_0\langle\epsilon^{-1/3}v,\epsilon^{-1/3}\rho\rangle^{1/2}\epsilon^{-1/3}|v-\rho|},
 \end{equation}
 
 (ii) In addition, we have the decomposition for $v,\rho,w\in\R$,
 \begin{equation}\label{GKA14.1}
 \Psi(\rho)k_{\epsilon}(v,\rho;w):=k^1_\epsilon(v,\rho;w)+k^2_\epsilon(v,\rho;w),
 \end{equation}
 where the kernels $k^j_\epsilon(v,\rho;w), j\in\{1,2\}$ are supported in $\rho\in[-4,4]$, and satisfy the bounds 
 \begin{equation}\label{GKA14.2}
 \begin{split}
& \sup_{\xi\in\R}\big|e^{\delta_0\langle \xi\rangle^{1/2}}\widehat{\,\,k^1_{\epsilon,w_0}}(v,\rho;\xi)\big|\lesssim \langle\epsilon^{-1/3}v\rangle^{3/2} e^{-\delta_0\langle\epsilon^{-1/3}v,\epsilon^{-1/3}\rho\rangle^{1/2}\epsilon^{-1/3}|v-\rho|},\\
& \sup_{\xi\in\R}\big|e^{\delta_0\langle \xi\rangle^{1/2}}\partial_v\widehat{\,\,k^1_{\epsilon,w_0}}(v,\rho;\xi)\big|\lesssim |\epsilon|^{-1/3}\langle\epsilon^{-1/3}v\rangle^2 e^{-\delta_0\langle \xi\rangle^{1/2}}e^{-\delta_0\langle\epsilon^{-1/3}v,\epsilon^{-1/3}\rho\rangle^{1/2}\epsilon^{-1/3}|v-\rho|},
 \end{split}
 \end{equation}
 and
  \begin{equation}\label{GKA14.3}
\sup_{\alpha\in\R}\int_\R\sup_{\eta\in\R}\Big[\big| \widehat{\,\,k_{\epsilon,w_0}^2}(\alpha,\beta;\eta)\big|e^{\delta_0\langle\eta\rangle^{1/2}}\Big] d\beta\lesssim1.
 \end{equation}

 \end{proposition}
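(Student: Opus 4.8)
The strategy is to reduce everything to the self-adjoint-type generalized Airy operator treated in Proposition \ref{BKG1} via a rescaling, then transfer the pointwise bounds there into Gevrey bounds in the $w$-variable by exploiting the smoothness of the coefficients $B^\ast$ in $w$. First I would perform the scaling $Y = \epsilon^{-1/3} v$, $Z = \epsilon^{-1/3}\rho$ in \eqref{GKA6}: after dividing through by $(B^\ast(v+w))^2$ and absorbing the first-order term, the operator $\epsilon\partial_v^2 - i v/(B^\ast(v+w))^2$ becomes, up to the lower-order drift and a smooth nonvanishing prefactor, the operator $\partial_Y^2 + i V_w(Y)$ with $V_w(Y) = -\epsilon^{1/3} Y/(B^\ast(\epsilon^{1/3}Y + w))^2$, whose derivative in $Y$ lies in $[\sigma,1/\sigma]$ for a suitable $\sigma$ by Assumption \ref{MaAs}. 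The source $(\rho + i\epsilon^{1/3})\delta(v-\rho)$ carries a factor $\epsilon^{1/3}(Z + i)$ and the $\delta$ rescales by $\epsilon^{1/3}$; combined with the prefactor, this explains the powers of $\epsilon$ and the factor $\langle \epsilon^{-1/3}\rho\rangle^{1/2}$ (the $Z+i$ improves $\langle Z\rangle$ to $\langle Z\rangle$ with no loss at $Z=0$) appearing in \eqref{GKA13}--\eqref{GKA14}. Thus the pointwise-in-$w$ bound follows from \eqref{BKG3}--\eqref{BKG4} with $\alpha = 0$ (the lower-order drift $\epsilon \partial_v B^\ast/B^\ast \cdot \partial_v$ is handled as a perturbation, either by incorporating it into a conjugated operator $e^{g}\partial_Y^2 e^{-g}$ with $g$ bounded, or by a Duhamel/Neumann-series argument against the kernel from Proposition \ref{BKG1} using the integral inequality Lemma \ref{pre300}).

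The main work is the Gevrey regularity in $w$. Here the plan is the standard commutator/iterated-difference approach: differentiate \eqref{GKA6} repeatedly in $w$ (equivalently, estimate high-order difference quotients), observing that each $w$-derivative either hits $V_w$ through $B^\ast(v+w)$ — producing a factor controlled by $\|e^{\delta_0\langle\xi\rangle^{1/2}}\widehat{\partial_v B^\ast}\|_{L^2}\lesssim 1$ from the paper's conventions — or hits the already-differentiated kernel, which satisfies the same equation with a forcing built from lower-order $w$-derivatives. One then runs a Gevrey-class induction, exactly as in \cite{IOJI,JiaG}, combining the pointwise Airy bounds \eqref{BKG3}--\eqref{BKG4} (used to invert the main operator at each step, via the local-to-global estimates of Lemma \ref{BKGL1} and the entanglement inequality Lemma \ref{BKGL31}) with the Gevrey product estimates for the coefficients; summing the resulting factorial bounds over the number of derivatives and using Lemma \ref{pre300} to perform the spatial convolutions yields \eqref{GKA13}--\eqref{GKA14}. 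The cutoff $\Psi(w-w_0)$ localizes $w$ to a bounded set so that $B^\ast(v+w)$ and its reciprocal stay in a fixed Gevrey ball with uniform constants.

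For part (ii), the decomposition \eqref{GKA14.1} is designed to separate the genuinely singular-at-$v=\rho$ part of the kernel (with a mild $\langle\epsilon^{-1/3}v\rangle^{3/2}$ growth, still pointwise-controlled in the $w$-Fourier variable, giving $k^1$) from a remainder $k^2$ that is regular enough to satisfy the stronger mixed bound \eqref{GKA14.3} — an $L^1$-in-$\beta$, $L^\infty$-in-$\alpha,\eta$ bound reflecting that $k^2$ is smooth in the variable conjugate to $\beta$. Concretely I would take $k^1$ to be (a smooth truncation near $v=\rho$ of) the leading Airy profile obtained by freezing coefficients at $v=\rho$ — whose explicit Airy-function structure gives the claimed $v$-derivative bound with the extra $e^{-\delta_0\langle\xi\rangle^{1/2}}$ decay — and let $k^2 := \Psi(\rho)k_\epsilon - k^1$ solve an equation whose forcing gains smoothness from the difference of frozen and unfrozen coefficients (which vanishes to first order at $v=\rho$); iterating once more in the $v$-variable then upgrades $k^2$ to the stronger norm. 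The main obstacle I anticipate is precisely this part (ii): extracting a clean enough leading profile $k^1$ so that the remainder is provably smooth in the right variable, while keeping all constants uniform in $\epsilon$ down to $\epsilon\to 0$ and tracking the delicate $\langle\epsilon^{-1/3}v\rangle$ versus $\langle\epsilon^{-1/3}v,\epsilon^{-1/3}\rho\rangle$ weights — this is where the oscillatory-and-singular nature of the critical-layer kernel, emphasized in the introduction, resists a simple renormalization and the decomposition has to be engineered by hand.
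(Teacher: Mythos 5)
Your plan for part (i) is essentially the paper's: pointwise bounds from Proposition \ref{BKG1}, then a commutator argument for the Gevrey regularity in $w$. Two caveats. First, the rescaling you propose directly in the $v$ variable does not produce a uniformly non-stationary potential: the $Y$-derivative of $Y/(B^\ast(\epsilon^{1/3}Y+w))^2$ equals $\tfrac{1}{b'(y)}\big[\tfrac{1}{b'(y)}-\tfrac{2(b(y)-b(y_0))b''(y)}{(b'(y))^3}\big]$ under the change of variables, which need not stay in $[\sigma,1/\sigma]$ when $|b(y)-b(y_0)|$ is large; the paper instead proves the pointwise bounds for the operator $\epsilon\partial_y^2+i(b(y_0)-b(y))$ in the original variables (Lemma \ref{GKA1}), where the potential is exactly $b$ and Assumption \ref{MaAs} applies verbatim, and then transfers to $(v,\rho)$ via the identity \eqref{GKA9.3}; the drift term $\epsilon\,\partial_vB^\ast/B^\ast\,\partial_v$ is generated automatically by this change of variables and needs no perturbative treatment. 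Second, for the Gevrey step the paper does not run a derivative-by-derivative induction with factorial bookkeeping: it applies the multiplier $A$ once to the equation, writes the result by Duhamel against $k_\epsilon$ itself, and absorbs the two commutators \eqref{GKA18}--\eqref{GKA19} into the bootstrap quantity $M_\epsilon(v,\rho)$ of \eqref{GKA22} using the $\langle\partial_w\rangle^{-1/2}$ gain. Your route is workable but heavier.

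Part (ii) as proposed has a genuine gap. You assign the explicit frozen-coefficient Airy profile to $k^1$ and the remainder to $k^2$; the stated bounds force the opposite assignment. Indeed \eqref{GKA14.2} requires $k^1$ to be $O(\langle\epsilon^{-1/3}v\rangle^{3/2})$ with no $|\epsilon|^{-1/3}$ prefactor, whereas the full kernel (and any frozen-coefficient leading profile of it) is genuinely of size $|\epsilon|^{-1/3}$ on the diagonal near $v=\rho=0$; conversely the mixed $L^1_\beta L^\infty_{\alpha,\eta}$ bound \eqref{GKA14.3} is exactly what one extracts from an explicit Fourier formula, not from a remainder. Moreover the freezing point matters: the paper freezes the coefficients at $v=0$ (replacing $B^\ast(v+w)$ by $B^\ast(w)$ in \eqref{GKA28}--\eqref{GKA29}), so that (a) the coefficient differences vanish at the critical layer $v=0$, producing the factors $|v'|$ and $|v'|^2$ against $|v'+i\epsilon^{1/3}|^{-1}$ in \eqref{GKA30} that make the remainder $k^1$ satisfy \eqref{GKA14.2}, and (b) the frozen equation has coefficients independent of both $v$ and $\rho$, so its double Fourier transform reduces to the first-order ODE \eqref{GKA32} in $\alpha$ with source $\widehat{\Psi_1}(\alpha+\beta)$, whose explicit solution \eqref{GKA33} yields \eqref{GKA14.3} after the computation \eqref{GKA35}--\eqref{GKA37}. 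Freezing at $v=\rho$, as you propose, makes the frozen coefficients depend on $\rho$, destroying the clean $\alpha+\beta$ structure needed for the $L^1$-in-$\beta$ bound, and makes the remainder's forcing vanish at $v=\rho$ rather than at $v=0$, which matches neither the $\langle\epsilon^{-1/3}v\rangle$ weights in \eqref{GKA14.2} nor the $(\rho+i\epsilon^{1/3})^{-1}$ singularities this decomposition must neutralize downstream.
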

The bounds \eqref{GKA13}-\eqref{GKA14} are sufficient for applications when $|v|+|\rho|\gtrsim1$, while the bounds \eqref{GKA14.1}-\eqref{GKA14.2} are useful for $|v|+|\rho|\lesssim1$. The rest of the section is devoted to the proof of Proposition \ref{GKA11}, which we organize into subsections. 

\subsection{Preliminary pointwise bounds}
We begin with the following pointwise bounds, as a corollary of Proposition \ref{BKG1}.
\begin{lemma}\label{GKA1}
Assume that $y_0\in\R, \alpha\in\R$ and $0<|\epsilon|<1/8$ with $\epsilon\alpha\ge0$. Let $k_{\epsilon,\alpha}^\ast(y,z;y_0)$ be the kernel for the operator $\epsilon\partial_y^2+i(b(y)-b(y_0))$ on $\R$. More precisely, for $y, z\in\R$, 
\begin{equation}\label{GKA1.1}
(\epsilon\partial_y^2-\alpha)k_{\epsilon,\alpha}^\ast(y,z;y_0)+i(b(y_0)-b(y))k_{\epsilon,\alpha}^\ast(y,z;y_0)=\delta(y-z).
\end{equation}
 Then for a suitable $c_0\in(0,1)$, we have the bound  for $y,z\in\R$, 
\begin{equation}\label{GKA2}
|k_{\epsilon,\alpha}^\ast(y,z;y_0)|\lesssim \frac{\epsilon^{-2/3}}{\langle\epsilon^{-1/3}(z-y_0),\epsilon^{-1/3}\alpha\rangle^{1/2}}e^{-c_0\langle \epsilon^{-1/3}(y-y_0), \epsilon^{-1/3}(z-y_0),\epsilon^{-1/3}\alpha\rangle^{1/2}\epsilon^{-1/3}|y-z|}
\end{equation}
and 
\begin{equation}\label{GKA3}
|\partial_yk_{\epsilon,\alpha}^\ast(y,z;y_0)|\lesssim \epsilon^{-1}e^{-c_0\langle \epsilon^{-1/3}(y-y_0), \epsilon^{-1/3}(z-y_0),\epsilon^{-1/3}\alpha\rangle^{1/2}\epsilon^{-1/3}|y-z|}
.
\end{equation}

\end{lemma}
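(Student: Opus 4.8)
The plan is to deduce Lemma \ref{GKA1} from Proposition \ref{BKG1} by an explicit rescaling of variables that converts the operator $\epsilon\partial_y^2-\alpha+i(b(y_0)-b(y))$ into the normalized form $\partial_Y^2-\tilde\alpha+iV(Y)$ treated in Proposition \ref{BKG1}. First I would center at $y_0$: write $y=y_0+\epsilon^{1/3}Y$, $z=y_0+\epsilon^{1/3}Z$, and set $V(Y):=\epsilon^{-1/3}\big(b(y_0)-b(y_0+\epsilon^{1/3}Y)\big)$ (or its negative, depending on the sign of $\epsilon$). Under Assumption \ref{MaAs} we have $b'\in[\sigma_0,1/\sigma_0]$, so $V\in C^1$, $V(0)=0$, and $|V'(Y)|=|b'(y_0+\epsilon^{1/3}Y)|\in[\sigma_0,1/\sigma_0]$ uniformly in $\epsilon$ and $y_0$; thus $V$ satisfies exactly the hypotheses of Proposition \ref{BKG1} with $\sigma=\sigma_0$. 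The zeroth-order coefficient rescales as $\alpha\mapsto\tilde\alpha:=\epsilon^{-1/3}\alpha\cdot\epsilon^{1/3}$... more carefully, after multiplying \eqref{GKA1.1} through by $\epsilon^{-1/3}$ and using $\partial_y^2=\epsilon^{-2/3}\partial_Y^2$, the operator becomes $\epsilon^{1/3}\partial_Y^2-\epsilon^{-1/3}\alpha+iV(Y)$ acting on $Y$... so I should instead divide by $\epsilon$ and absorb a power of $\epsilon$ into the kernel: the correct bookkeeping is that $k_{\epsilon,\alpha}^\ast(y,z;y_0)=\epsilon^{-2/3}K(Y,Z)$ where $K$ solves $\partial_Y^2 K-(\epsilon^{-1/3}\alpha\cdot\epsilon^{1/3}) K+\dots$; the one genuine point to check is that the effective second parameter in Proposition \ref{BKG1} is $\alpha_{\rm eff}=\epsilon^{-1/3}\alpha$ paired against $Z-y_0$ rescaled, so that $\langle\alpha_{\rm eff},Z_{\rm resc}\rangle=\langle\epsilon^{-1/3}\alpha,\epsilon^{-1/3}(z-y_0)\rangle$, and the $\delta$-function rescaling contributes $\delta(y-z)=\epsilon^{-1/3}\delta(Y-Z)$.

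Concretely, I would verify that $k_{\epsilon,\alpha}^\ast(y,z;y_0)=\epsilon^{-2/3}\,\widetilde K\big(\epsilon^{-1/3}(y-y_0),\epsilon^{-1/3}(z-y_0)\big)$ where $\widetilde K(Y,Z)$ is the fundamental solution of $\partial_Y^2-\alpha_{\rm eff}+i\widetilde V(Y)$ with $\alpha_{\rm eff}=\epsilon^{-1/3}\alpha\geq 0$ (here using the sign condition $\epsilon\alpha\geq 0$ to guarantee $\alpha_{\rm eff}\geq 0$, which is exactly what Proposition \ref{BKG1} requires) and $\widetilde V(Y)=\epsilon^{-1/3}(b(y_0)-b(y_0+\epsilon^{1/3}Y))$ when $\epsilon>0$, with the analogous reflected version for $\epsilon<0$. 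Plugging the scaling into \eqref{GKA1.1} and matching the $\delta$-functions fixes the prefactor $\epsilon^{-2/3}$. Then \eqref{BKG3} applied to $\widetilde K$ gives
\begin{equation*}
|\widetilde K(Y,Z)|\lesssim \frac{1}{\langle\alpha_{\rm eff},Z\rangle^{1/2}}e^{-c_0\langle\alpha_{\rm eff},Y,Z\rangle^{1/2}|Y-Z|},
\end{equation*}
and substituting $Y=\epsilon^{-1/3}(y-y_0)$, $Z=\epsilon^{-1/3}(z-y_0)$, $\alpha_{\rm eff}=\epsilon^{-1/3}\alpha$ yields \eqref{GKA2} after multiplying by the prefactor $\epsilon^{-2/3}$; note $|Y-Z|=\epsilon^{-1/3}|y-z|$ and $\langle\alpha_{\rm eff},Z\rangle^{1/2}=\langle\epsilon^{-1/3}\alpha,\epsilon^{-1/3}(z-y_0)\rangle^{1/2}$, which matches the claimed form. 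For \eqref{GKA3}, the chain rule gives $\partial_y k_{\epsilon,\alpha}^\ast = \epsilon^{-2/3}\cdot\epsilon^{-1/3}\,(\partial_Y\widetilde K) = \epsilon^{-1}(\partial_Y\widetilde K)$, so \eqref{BKG4} gives
\begin{equation*}
|\partial_y k_{\epsilon,\alpha}^\ast(y,z;y_0)|\lesssim \epsilon^{-1}\frac{\langle\alpha_{\rm eff},Y\rangle^{1/2}}{\langle\alpha_{\rm eff},Z\rangle^{1/2}}e^{-c_0\langle\alpha_{\rm eff},Y,Z\rangle^{1/2}|Y-Z|},
\end{equation*}
and since $\langle\alpha_{\rm eff},Y\rangle^{1/2}/\langle\alpha_{\rm eff},Z\rangle^{1/2}\lesssim e^{(c_0/2)\langle\alpha_{\rm eff},Y,Z\rangle^{1/2}|Y-Z|}$ (elementary: the ratio is bounded by $\langle Y-Z\rangle^{1/2}\lesssim e^{(c_0/2)|Y-Z|}$ when $\langle\alpha_{\rm eff},Y,Z\rangle\gtrsim 1$, and trivially bounded otherwise), this can be absorbed into a slightly smaller constant $c_0$ to give the cleaner bound \eqref{GKA3} without the polynomial ratio.

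The steps in order: (1) record the exact change of variables and derive the scaling identity for $k_{\epsilon,\alpha}^\ast$, being careful about signs so that $\alpha_{\rm eff}\geq 0$ uses the hypothesis $\epsilon\alpha\geq0$ and the case $\epsilon<0$ is handled by an extra reflection $Y\mapsto -Y$ (which preserves the hypotheses of Proposition \ref{BKG1} since $|V'|$ is unchanged); (2) verify $\widetilde V$ satisfies the hypotheses of Proposition \ref{BKG1} using \eqref{asp1}; (3) quote \eqref{BKG3}--\eqref{BKG4} and substitute back; (4) absorb the polynomial factor $\langle\alpha_{\rm eff},Y\rangle^{1/2}/\langle\alpha_{\rm eff},Z\rangle^{1/2}$ into the exponential at the cost of shrinking $c_0$. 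I expect no serious obstacle here — this lemma is purely a rescaling corollary, so the ``hard part'' is only the clerical matter of tracking the powers of $\epsilon$ through the second-derivative rescaling and the delta-function, and making sure the argument-of-$\langle\cdot\rangle$ bookkeeping produces precisely $\epsilon^{-1/3}(y-y_0), \epsilon^{-1/3}(z-y_0), \epsilon^{-1/3}\alpha$ as claimed in \eqref{GKA2}--\eqref{GKA3}. One small subtlety worth flagging: Proposition \ref{BKG1} is stated with the ``non-stationarity'' constant $\sigma$, and here $b''$ has compact support so $b'$ is constant outside $[-1/\sigma_0,1/\sigma_0]$; this is fine since $b'\in[\sigma_0,1/\sigma_0]$ everywhere, so $|\widetilde V'|\in[\sigma_0,1/\sigma_0]$ uniformly regardless of where $y_0+\epsilon^{1/3}Y$ lands.
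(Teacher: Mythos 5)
Your proposal is correct and is essentially the paper's own proof, which consists of exactly this rescaling $y-y_0=\epsilon^{1/3}Y$, $z-y_0=\epsilon^{1/3}Z$, $b(y)-b(y_0)=\epsilon^{1/3}V(Y)$ followed by an application of Proposition \ref{BKG1}. Your bookkeeping is in fact more careful than the paper's displayed change of variables \eqref{GKA3.1} (which omits the $\epsilon^{-2/3}$ prefactor forced by the $\delta$-function rescaling), and your absorption of the ratio $\langle\alpha_{\rm eff},Y\rangle^{1/2}/\langle\alpha_{\rm eff},Z\rangle^{1/2}$ into the exponential at the cost of shrinking $c_0$ is exactly the step needed to pass from \eqref{BKG4} to \eqref{GKA3}.
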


\begin{proof}
The proof follows from Proposition \ref{BKG1} and the change of variables for $y,z,y_0\in\R$,
\begin{equation}\label{GKA3.1}
y-y_0:=\epsilon^{1/3}Y,\quad z-y_0:=\epsilon^{1/3}Z, \quad b(y)-b(y_0):=\epsilon^{1/3}V(Y),\quad k_\epsilon^\ast(y,z;y_0):=K(Y,Z).
\end{equation}

\end{proof}


We have the following preliminary pointwise estimates, as a consequence of Lemma \ref{GKA1} and the change of variables \eqref{intP7}.
\begin{lemma}\label{GKA7}
Assume that $\epsilon\in(-1/8,1/8)\backslash\{0\}$ and $k\in\Z\backslash\{0\}$. Let $k_\epsilon(v,\rho;w)$ be defined as in \eqref{GKA6}. For suitable $c_0=c_0(\sigma)\in(0,1)$, we have the bounds for all $v,\rho,w\in\R$,
\begin{equation}\label{GKA8}
|k_\epsilon(v,\rho;w)|\lesssim|\epsilon|^{-1/3}\langle \epsilon^{-1/3}\rho\rangle^{1/2}e^{-c_0\langle\epsilon^{-1/3}v,\epsilon^{-1/3}\rho\rangle^{1/2}\epsilon^{-1/3}|v-\rho|},
\end{equation}
and 
\begin{equation}\label{GKA9}
|\partial_vk_\epsilon(v,\rho;w)|\lesssim  |\epsilon|^{-2/3}\langle \epsilon^{-1/3}\rho\rangle e^{-c_0\langle\epsilon^{-1/3}v,\epsilon^{-1/3}\rho\rangle^{1/2}\epsilon^{-1/3}|v-\rho|}.
\end{equation}

\end{lemma}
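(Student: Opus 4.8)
The plan is to deduce Lemma \ref{GKA7} directly from the pointwise bounds in Lemma \ref{GKA1} by tracking the change of variables \eqref{int10}--\eqref{intP7}, together with the rescalings that convert \eqref{GKA6} into the form \eqref{GKA1.1}. First I would undo the $(v,w)\mapsto(y,y_0)$ substitution: since $v=b(y)$, $w=b(y_0)$ and $B^\ast(v)=b'(y)$, the operator $\epsilon\partial_v^2+\epsilon\frac{\partial_vB^\ast}{B^\ast}\partial_v$ acting on functions of $v$ corresponds, after multiplication by $(B^\ast(v))^2$, to $\epsilon\,(B^\ast(v))^{-2}\partial_v^2(B^\ast(v))^2\cdot$ being rewritten in $y$; more precisely, with $b$ a diffeomorphism having $b'\in[\sigma_0,1/\sigma_0]$, the chain rule gives $\partial_y=B^\ast(v)\partial_v$, hence $\partial_y^2=(B^\ast)^2\partial_v^2+B^\ast(\partial_vB^\ast)\partial_v$. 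Multiplying \eqref{GKA6} through by $(B^\ast(v+w))^{2}$ and using $v=b(y)-b(y_0)$ (so $v+w=b(y)$, $w=b(y_0)$), the left side becomes exactly $\epsilon\partial_y^2-i(b(y)-b(y_0))$ applied to the corresponding kernel, i.e.\ the operator in \eqref{GKA1.1} with $\alpha=0$ after relabelling; the factor $(B^\ast)^2$ also converts the weight $v/(B^\ast(v+w))^2$ into $v=b(y)-b(y_0)$. The right-hand side $(\rho+i\epsilon^{1/3})\delta(v-\rho)$ contributes the prefactor and a Jacobian $b'$ when we pass the delta through the change of variables.

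The second step is bookkeeping on the delta function and the extra $(\rho+i\epsilon^{1/3})$ factor. Under $v\mapsto y$ with $v=b(y)-b(y_0)$ we have $\delta(v-\rho)=\frac{1}{b'(y_1)}\delta(y-y_1)$ where $b(y_1)-b(y_0)=\rho$; this introduces a bounded, Gevrey-smooth factor $1/B^\ast$. So $k_\epsilon(v,\rho;w)$ equals, up to the bounded multiplicative factors $(\rho+i\epsilon^{1/3})/B^\ast(\cdot)$ and a constant, the kernel $k^\ast_{\epsilon,0}(y,z;y_0)$ of Lemma \ref{GKA1} evaluated at the corresponding points, where $z-y_0$ corresponds to $\rho$ and $y-y_0$ to $v$. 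Then I would plug into \eqref{GKA2}: the prefactor $\frac{\epsilon^{-2/3}}{\langle\epsilon^{-1/3}(z-y_0),\epsilon^{-1/3}\alpha\rangle^{1/2}}$ with $\alpha=0$ becomes $\epsilon^{-2/3}\langle\epsilon^{-1/3}(z-y_0)\rangle^{-1/2}$; multiplying by $|\rho+i\epsilon^{1/3}|\lesssim\langle\rho\rangle+|\epsilon|^{1/3}$ — and noting that after the change of variables $|\rho|\approx|z-y_0|$ up to the constant $\sigma_0$, so $|\rho+i\epsilon^{1/3}|\lesssim |\epsilon|^{1/3}\langle\epsilon^{-1/3}(z-y_0)\rangle$ — gives the claimed size $|\epsilon|^{-1/3}\langle\epsilon^{-1/3}\rho\rangle^{1/2}$ in \eqref{GKA8}. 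The exponential factor in \eqref{GKA2} translates, using $|y-z|\approx|v-\rho|$ and $|y-y_0|\approx|v|$, $|z-y_0|\approx|\rho|$ (all up to $\sigma_0$-dependent constants, absorbable by shrinking $c_0$), into $e^{-c_0\langle\epsilon^{-1/3}v,\epsilon^{-1/3}\rho\rangle^{1/2}\epsilon^{-1/3}|v-\rho|}$. The derivative bound \eqref{GKA9} follows the same way from \eqref{GKA3}, after noting $\partial_v=\frac{1}{B^\ast}\partial_y+(\text{lower order})$ so that $|\partial_vk_\epsilon|\lesssim|\partial_yk^\ast_{\epsilon,0}|\cdot|\rho+i\epsilon^{1/3}|+|k_\epsilon|\lesssim \epsilon^{-1}\cdot|\epsilon|^{1/3}\langle\epsilon^{-1/3}\rho\rangle\cdot e^{-c_0(\cdots)}$, i.e.\ $|\epsilon|^{-2/3}\langle\epsilon^{-1/3}\rho\rangle$.

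The only genuinely delicate point — and the step I would treat most carefully — is the comparison of the quadratic-exponent argument $\langle\epsilon^{-1/3}(y-y_0),\epsilon^{-1/3}(z-y_0)\rangle^{1/2}\epsilon^{-1/3}|y-z|$ appearing in Lemma \ref{GKA1} with the target $\langle\epsilon^{-1/3}v,\epsilon^{-1/3}\rho\rangle^{1/2}\epsilon^{-1/3}|v-\rho|$. Because $v=b(y)-b(y_0)$ and $b'\in[\sigma_0,1/\sigma_0]$, we have $\sigma_0|y-y_0|\le|v|\le\sigma_0^{-1}|y-y_0|$ and similarly $\sigma_0|y-z|\le|v-\rho|\le\sigma_0^{-1}|y-z|$; hence the product changes by at most a factor $\sigma_0^{-3/2}$, which can be absorbed into a smaller choice of $c_0=c_0(\sigma)$. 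One must also check that the definition \eqref{GKA3.1} used in the proof of Lemma \ref{GKA1} (which rescales near $y_0$ and uses $V(Y)=\epsilon^{-1/3}(b(y_0+\epsilon^{1/3}Y)-b(y_0))$ with $V'\in[\sigma_0,1/\sigma_0]$, $V(0)=0$) is exactly consistent with the operator obtained above — it is, since that is precisely how Lemma \ref{GKA1} was set up. No step requires new analytic input; the lemma is a clean corollary, and the remark after the statement confirms this. I would close with a one-line proof: apply Lemma \ref{GKA1} with $\alpha=0$ after the change of variables \eqref{int10}, absorb the bounded Gevrey factors $(\rho+i\epsilon^{1/3})$, $1/B^\ast$, and shrink $c_0$ to compensate for the $\sigma_0$-bilipschitz distortion.
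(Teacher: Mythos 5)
Your proposal is correct and follows essentially the same route as the paper: the paper defines $k^\dagger_\epsilon(v,\rho;w)=k^\ast_{\epsilon,0}(y,z;y_0)$ via the change of variables \eqref{intP7}, records the identity $k_\epsilon(v,\rho;w)=(\rho+i\epsilon^{1/3})B^\ast(\rho+w)k^\dagger_\epsilon(v+w,\rho+w;w)$, and then reads off \eqref{GKA8}--\eqref{GKA9} from \eqref{GKA2}--\eqref{GKA3} after shrinking $c_0$ to absorb the $\sigma_0$-bilipschitz distortion, exactly as you describe. Your bookkeeping of the delta-function Jacobian, the $(\rho+i\epsilon^{1/3})$ prefactor (converted to $|\epsilon|^{1/3}\langle\epsilon^{-1/3}\rho\rangle$), and the comparability of the exponents matches the paper's intended argument.
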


\begin{proof}
We assume without loss of generality, $\epsilon>0$. Define, using the change of variables \eqref{intP7}, for $v,\rho,w\in\R$, with $\alpha=0$,
\begin{equation}\label{GKA9.1}
k^\dagger_\epsilon(v,\rho;w)=k_{\epsilon,\alpha}^\ast(y,z;y_0).
\end{equation}
Then $k^\dagger_\epsilon(v,\rho;w)$ satisfies for $v,\rho,w\in\R$,
\begin{equation}\label{GKA9.2}
\Big[\epsilon(B^\ast(v))^2\partial_v^2+\epsilon B^\ast\partial_vB^\ast(v)\partial_v+i(v-w)\Big]k^\dagger_\epsilon(v,\rho;w)=B^\ast(\rho)\delta(v-\rho).
\end{equation}
By choosing $c_0$ as a fraction of the $c_0$ appearing in \eqref{GKA2}-\eqref{GKA3}, the desired bounds follow from Lemma \ref{GKA1} and the identity that for $v,\rho,w\in\R$,
\begin{equation}\label{GKA9.3}
k_\epsilon(v,\rho;w)=(\rho+i\epsilon^{1/3})B^\ast(\rho+w)k^\dagger_\epsilon(v+w,\rho+w;w).
\end{equation}

\end{proof}

 \subsection{Proof of \eqref{GKA13}-\eqref{GKA14}}
 We note that for $v, \rho\in\R$, 
 \begin{equation}\label{GKA15}
 \max\{|v|^{1/2}, |\rho|^{1/2}\}|v-\rho|\approx \big||v|^{1/2}v-|\rho|^{1/2}\rho\big|.
 \end{equation}
 We shall, for the convenience of argument, prove the following bounds, which imply \eqref{GKA13}-\eqref{GKA14} up to the constant $\delta_0\in(0,1)$,
 \begin{equation}\label{GKA15}
 \left\|A\Big[k_{\epsilon,w_0}(v,\rho;\cdot)\Big](w)\right\|_{L^2(\R)}\lesssim \epsilon^{-1/3}\langle \epsilon^{-1/3}\rho\rangle^{1/2} e^{-\delta_0\epsilon^{-1/2}||v|^{1/2}v-|\rho|^{1/2}\rho|},
 \end{equation}
 and
  \begin{equation}\label{GKA16}
 \left\|\partial_vA\Big[k_{\epsilon,w_0}(v,\rho;\cdot)\Big](w)\right\|_{L^2(\R)}\lesssim \epsilon^{-2/3}\langle \epsilon^{-1/3}\rho\rangle e^{-\delta_0\epsilon^{-1/2}||v|^{1/2}v-|\rho|^{1/2}\rho|}.
 \end{equation}
Multiplying $\Psi(w-w_0)$ to the equation \eqref{GKA6} and applying the Fourier multiplier $A$ (acting on the variable $w$), we obtain that 
 \begin{equation}\label{GKA17}
 \begin{split}
&\bigg[\epsilon\partial_v^2+\epsilon\frac{\partial_vB^\ast(v+w)}{B^\ast(v+w)}\partial_v-\frac{iv}{(B^\ast(v+w))^2}\bigg]A\big[k_{\epsilon,w_0}(v,\rho;\cdot)\big](w)\\
&=(\rho+i\epsilon^{1/3})\delta(v-\rho)A\big[\Psi(\cdot-w_0)\big](w)+\mathcal{C}_{\epsilon,w_0}^1(v,\rho;w)+\mathcal{C}_{\epsilon,w_0}^2(v,\rho;w),
\end{split}
\end{equation}
where the commutator terms $\mathcal{C}_{\epsilon,w_0}^1$ and $\mathcal{C}_{\epsilon,w_0}^2$ are given by 
\begin{equation}\label{GKA18}
\mathcal{C}_{\epsilon,w_0}^1(v,\rho;w):=\epsilon\bigg\{\frac{\partial_vB^\ast(v+w)}{B^\ast(v+w)}\partial_vA\big[k_{\epsilon,w_0}(v,\rho;\cdot)\big](w)-A\bigg[\frac{\partial_vB^\ast(v+\cdot)}{B^\ast(v+\cdot)}\partial_vk_{\epsilon,w_0}(v,\rho;\cdot)\bigg](w)\bigg\},
\end{equation}
 and 
 \begin{equation}\label{GKA19}
 \mathcal{C}_{\epsilon,w_0}^2(v,\rho;w):=-iv\bigg\{\frac{1}{(B^\ast(v+w))^2}A\big[k_{\epsilon,w_0}(v,\rho;\cdot)\big](w)-A\bigg[\frac{1}{(B^\ast(v+\cdot))^2}k_{\epsilon,w_0}(v,\rho;\cdot)\bigg](w)\bigg\}.
 \end{equation} 
 Fix Gevrey cutoff functions $\Psi^\ast$ and $\Psi^\dagger$ satisfying
 \begin{equation}\label{GKA19.1}
 \begin{split}
&\Psi^\ast\in C_0^\infty[-5,5],\quad \Psi^\ast\equiv 1\,\, {\rm on}\,\, [-4,4], \quad\sup_{\xi\in\R}\Big[e^{\langle \xi\rangle^{3/4}}\widehat{\,\Psi^\ast\,}(\xi)\Big]\lesssim1,\\
&\Psi^\dagger\in C_0^\infty[-6,6],\quad \Psi^\dagger\equiv 1\,\, {\rm on}\,\, [-5,5], \quad\sup_{\xi\in\R}\Big[e^{\langle \xi\rangle^{3/4}}\widehat{\,\Psi^\dagger\,}(\xi)\Big]\lesssim1.
\end{split}
\end{equation}

It follows from equation \eqref{GKA17} and the definition \eqref{GKA6} that for $v,\rho,w\in\R$,
\begin{equation}\label{GKA20}
\begin{split}
\big|\Psi^\ast(w-w_0)A\big[k_{\epsilon,w_0}(v,\rho;\cdot)\big](w)\big|\lesssim& \big|\Psi^\ast(w-w_0)A\big[\Psi(\cdot-w_0)\big](w)k_{\epsilon}(v,\rho;w)\big|\\
&+\sum_{j\in\{1,2\}}\bigg|\int_\R k_{\epsilon}(v,v';w)\Psi^\ast(w-w_0)\frac{\mathcal{C}^j_{\epsilon,w_0}(v',\rho;w)}{v'+i\epsilon^{1/3}}dv'\bigg|,
\end{split}
\end{equation}
and 
\begin{equation}\label{GKA21}
\begin{split}
\big|\Psi^\ast(w-w_0)\partial_vA\big[k_{\epsilon,w_0}(v,\rho;\cdot)\big](w)\big|\lesssim& \big|\Psi^\ast(w-w_0)A\big[\Psi(\cdot-w_0)\big](w)\partial_vk_{\epsilon}(v,\rho;w)\big|\\
&+\sum_{j\in\{1,2\}}\bigg|\int_\R \partial_vk_{\epsilon}(v,v';w)\Psi^\ast(w-w_0)\frac{\mathcal{C}^j_{\epsilon,w_0}(v',\rho;w)}{v'+i\epsilon^{1/3}}dv'\bigg|.
\end{split}
\end{equation}
Denote for $v,\rho\in\R$,
\begin{equation}\label{GKA22}
\begin{split}
M_\epsilon(v,\rho):=\sup_{w_0\in\R}\bigg\{&\epsilon^{1/3}\langle \epsilon^{-1/3}v\rangle^{-1/2}e^{\delta_0 \epsilon^{-1/2}||v|^{1/2}-|\rho|^{1/2}\rho|}\big\|A\big[k_{\epsilon,w_0}(v,\rho;\cdot)\big](w)\big\|_{L^2(w\in\R)}\\
&+\epsilon^{2/3}\langle \epsilon^{-1/3}v\rangle^{-1}e^{\delta_0 \epsilon^{-1/2}||v|^{1/2}-|\rho|^{1/2}\rho|}\big\|\partial_vA\big[k_{\epsilon,w_0}(v,\rho;\cdot)\big](w)\big\|_{L^2(w\in\R)}\bigg\}.
\end{split}
\end{equation}
In the above, $\delta_0$ can be chosen as a fraction of the $c_0$ appearing in \eqref{GKA8}-\eqref{GKA9}. Noting that for $v,\rho,w\in\R$,
\begin{equation}\label{GAK22.1}
\Psi^\dagger(w-w_0)k_{\epsilon,w_0}(v,\rho;w)=k_{\epsilon,w_0}(v,\rho;w),
\end{equation}
we have the following bounds on the commutator terms $ \mathcal{C}^1_{\epsilon,w_0}(v,\rho;w)$ and $\mathcal{C}^1_{\epsilon,w_0}(v,\rho;w)$,
\begin{equation}\label{GKA23}
\begin{split}
\|\Psi^\ast(w-w_0)\mathcal{C}^1_{\epsilon,w_0}(v,\rho;w)\|_{L^2(w\in\R)}&\lesssim \epsilon\Big\|\langle\partial_w\rangle^{-1/2}\partial_vA\big[k_{\epsilon,w_0}(v,\rho;\cdot)\big](w)\Big\|_{L^2(w\in\R)}\\
&\lesssim(\gamma M_\epsilon(v,\rho)+C_{\gamma}) \epsilon^{1/3}\langle \epsilon^{-1/3}v\rangle e^{-\delta_0\epsilon^{-1/2}||v|^{1/2}v-|\rho|^{1/2}\rho|},
\end{split}
\end{equation}
and 
\begin{equation}\label{GKA24}
\begin{split}
\|\Psi^\ast(w-w_0)\mathcal{C}^2_{\epsilon,w_0}(v,\rho;w)\|_{L^2(w\in\R)}&\lesssim |v|\Big\|\langle\partial_w\rangle^{-1/2}A\big[k_{\epsilon,w_0}(v,\rho;\cdot)\big](w)\Big\|_{L^2(w\in\R)}\\
&\lesssim(\gamma M_\epsilon(v,\rho)+C_{\gamma}) \langle \epsilon^{-1/3}v\rangle^{3/2} e^{-\delta_0\epsilon^{-1/2}||v|^{1/2}v-|\rho|^{1/2}\rho|},
\end{split}
\end{equation}
for any $\gamma\in(0,1)$ and suitable constant $C_\gamma\in(0,\infty)$. Using the bounds \eqref{GKA20}-\eqref{GKA21}, we obtain that for all $v,\rho\in\R$
\begin{equation}\label{GKA25}
M_\epsilon(v,\rho)\lesssim \gamma M_\epsilon(v,\rho)+C_\gamma.
\end{equation}
Choosing $\gamma$ sufficiently small, the desired bounds \eqref{GKA13}-\eqref{GKA14} then follow from \eqref{GKA23}-\eqref{GKA24} and Lemma \ref{pre300}. 
 
 \subsection{Proof of \eqref{GKA14.1}-\eqref{GKA14.3}}
 The bounds \eqref{GKA13}-\eqref{GKA14} are sufficient for our purposes for $|v|+|\rho|\ge1$. However we need more precise characterizations of the kernel $k_\epsilon(v,\rho;w)$ for $|v|+|\rho|\lesssim1$. The equation \eqref{GKA6} can be reformulated, for $v,\rho\in\R$, as
 \begin{equation}\label{GKA26}
 \begin{split}
 &\bigg[\epsilon\partial_v^2+\epsilon\frac{\partial_vB^\ast(w)}{B^\ast(w)}\partial_v-i\frac{v}{(B^\ast(w))^2}\bigg]k_\epsilon(v,\rho;w)=\epsilon\bigg[\frac{\partial_vB^\ast(w)}{B^\ast(w)}-\frac{\partial_vB^\ast(v+w)}{B^\ast(v+w)}\bigg]\partial_vk_\epsilon(v,\rho;w)\\
 &-iv\bigg[\frac{1}{(B^\ast(w))^2}-\frac{1}{(B^\ast(v+w))^2}\bigg]k_\epsilon(v,\rho;w)+(\rho+i\epsilon^{1/3})\delta(v-\rho).
 \end{split}
 \end{equation}
 We decompose, using the Gevrey cutoff function $\Psi$ as in \eqref{GKA10}, for $v,\rho,w\in\R$,
 \begin{equation}\label{GKA27}
\Psi(\rho)k_\epsilon(v,\rho;w)=k^1_\epsilon(v,\rho;w)+k^2_\epsilon(v,\rho;w),
 \end{equation}
 where 
 \begin{equation}\label{GKA28}
 \begin{split}
 &\epsilon\partial_v^2k^1_\epsilon(v,\rho;w)+\epsilon\frac{\partial_wB^\ast(w)}{B^\ast(w)}\partial_vk^1_\epsilon(v,\rho;w)-i\frac{v}{(B^\ast(w))^2}k^1_\epsilon(v,\rho;w)\\
 &=\bigg\{\epsilon\bigg[\frac{\partial_vB^\ast(w)}{B^\ast(w)}-\frac{\partial_vB^\ast(v+w)}{B^\ast(v+w)}\bigg]\partial_v+\bigg[\frac{iv}{(B^\ast(w))^2}-\frac{iv}{(B^\ast(v+w))^2}\bigg]\bigg\}\Psi(\rho)k_\epsilon(v,\rho;w),
 \end{split}
 \end{equation}
 and
 \begin{equation}\label{GKA29}
 \begin{split}
 &\bigg[\epsilon\partial_v^2+\epsilon\frac{\partial_wB^\ast(w)}{B^\ast(w)}\partial_v-i\frac{v}{(B^\ast(w))^2}\bigg]k^2_\epsilon(v,\rho;w)=(\rho+i\epsilon^{1/3})\Psi(\rho)\delta(v-\rho).
 \end{split}
 \end{equation}
 Recalling \eqref{GKA12}, we have the bounds for $v,\rho,\xi\in\R$, 
  \begin{equation}\label{GKA30}
 \begin{split}
 &e^{\delta_0\langle\xi\rangle^{1/2}}\big|\widehat{\,\,k^1_{\epsilon,w_0}}(v,\rho;\xi)\big|\\
 &\lesssim \int_\R \epsilon^{-1/3}\langle \epsilon^{-1/3}v'\rangle^{1/2}|v'|\frac{\epsilon^{1/3}\langle\epsilon^{-1/3}v'\rangle}{|v'+i\epsilon^{1/3}|}e^{-\delta_0 \epsilon^{-1/2}||v'|^{1/2}v'-|\rho|^{1/2}\rho|} e^{-2\delta_0\epsilon^{-1/2}||v|^{1/2}v-|v'|^{1/2}v'|}\,dv'\\
 &+ \int_\R \epsilon^{-1/3}\langle \epsilon^{-1/3}v'\rangle^{1/2}\frac{\epsilon^{-1/3}|v'|^2\langle\epsilon^{-1/3}v'\rangle^{1/2}}{|v'+i\epsilon^{1/3}|} e^{-\delta_0 \epsilon^{-1/2}||v|^{1/2}v-|\rho|^{1/2}\rho|}  e^{-2\delta_0\epsilon^{-1/2}||v|^{1/2}v-|v'|^{1/2}v'|}\,dv'\\
 &\lesssim \langle\epsilon^{-1/3}v\rangle^{3/2}e^{-\delta_0 \epsilon^{-1/2}||v|^{1/2}v-|\rho|^{1/2}\rho|}.
 \end{split}
 \end{equation}
 Similarly, we have 
 \begin{equation}\label{GKA31}
 \big|\partial_v\widehat{\,\,k^1_{\epsilon,w_0}}(v,\rho;\xi)\big|\lesssim \epsilon^{-1/3}\langle\epsilon^{-1/3}v\rangle^2 e^{-\delta_0\langle \xi\rangle^{1/2}}e^{-\delta_0 \epsilon^{-1/2}||v|^{1/2}v-|\rho|^{1/2}\rho|}.
 \end{equation}
 We turn to the more singular component $k^2_{\epsilon}(v,\rho;w)$. Denote for $\rho\in\R$, 
 \begin{equation}\label{GKA31.1}
 \Psi_1(\rho):=\rho\Psi(\rho),
 \end{equation}
 then for $\gamma\in\R$,
$ \widehat{\,\,\Psi_1}(\gamma)=i\partial_\gamma\widehat{\,\Psi\,}(\gamma).$
 Taking Fourier transform of \eqref{GKA29} in both $v,\rho\in\R$, we obtain for $\alpha,\beta,w\in\R$ that
 \begin{equation}\label{GKA32}
 \begin{split}
 &-\epsilon\alpha^2\widetilde{\,\, k_\epsilon^2}(\alpha,\beta;w)+i\epsilon\alpha\frac{\partial_wB^\ast(w)}{B^\ast(w)}\widetilde{\,\, k_\epsilon^2}(\alpha,\beta;w)+\frac{1}{(B^\ast(w))^2}\partial_\alpha \widetilde{\,\, k_\epsilon^2}(\alpha,\beta;w)\\
 &=\widehat{\,\,\Psi_1}(\alpha+\beta)+i\epsilon^{1/3}\widehat{\,\,\Psi\,\,}(\alpha+\beta).
 \end{split}
 \end{equation}
 Define for $\gamma,\alpha,w\in\R$,
 \begin{equation}\label{GKA32.1}
 E_{xp}(\gamma,\alpha,w):=e^{-\epsilon/3(B^\ast(w))^2[\gamma^3-\alpha^3]+i(\epsilon/2)B^\ast\partial_wB^\ast(w)[\gamma^2-\alpha^2]}.
 \end{equation}
We obtain from \eqref{GKA32} that
\begin{equation}\label{GKA33}
\begin{split}
 &\widetilde{\,\,k_\epsilon^2}(\alpha,\beta;w)=-\int_{\alpha}^{\infty} (B^\ast(w))^2\Big[\widehat{\,\,\Psi_1}(\gamma+\beta)+i\epsilon^{1/3}\widehat{\,\Psi\,}(\gamma+\beta)\Big]E_{xp}(\gamma,\alpha,w)\,d\gamma\\
 &=i(B^\ast(w))^2\widehat{\,\Psi\,}(\alpha+\beta)+\int_{\alpha}^{\infty} (B^\ast(w))^2\widehat{\,\Psi\,}(\gamma+\beta)i\partial_\gamma E_{xp}(\gamma,\alpha,w)\,d\gamma\\
 &\quad-\int_\alpha^\infty i\epsilon^{1/3} (B^\ast(w))^2\widehat{\,\Psi\,}(\gamma+\beta) E_{xp}(\gamma,\alpha,w)\,d\gamma\\
&=-\int_{\alpha}^{\infty} i\Big[\epsilon\gamma^2(B^\ast(w))^2-i\epsilon \gamma B^\ast\partial_wB^\ast(w)\Big](B^\ast(w))^2\widehat{\,\Psi\,}(\gamma+\beta) E_{xp}(\gamma,\alpha,w)\,d\gamma\\
&\quad+i(B^\ast(w))^2\widehat{\,\Psi\,}(\alpha+\beta)-\int_{\alpha}^{\infty} i\epsilon^{1/3} (B^\ast(w))^2\widehat{\,\Psi\,}(\gamma+\beta) E_{xp}(\gamma,\alpha,w)\,d\gamma.
\end{split}
\end{equation}
Setting for $\alpha,\gamma,w\in\R$,
\begin{equation}\label{GKA34}
h(\gamma,\alpha;w):=E_{xp}(\gamma,\alpha,w){\bf 1}_{[0,\infty)}(\gamma-\alpha),
\end{equation} 
for $h^\ast\in\{\Psi(w-w_0)(B^\ast(w))^2h(\gamma,\alpha;w), \Psi(w-w_0)B^\ast\partial_wB^\ast(w)h(\gamma,\alpha;w)\}$, we have for a suitable $c_1\in(0,1)$,
\begin{equation}\label{GKA35}
\Big|\int_\R h^\ast(\gamma,\alpha;w)e^{-iw\eta}\,dw\Big|\lesssim e^{-c_1\epsilon(\gamma^3-\alpha^3)}{\bf 1}_{[0,\infty)}(\gamma-\alpha)e^{-\delta_0\langle \eta\rangle^{1/2}}.
\end{equation}
We obtain that 
\begin{equation}\label{GKA36}
\sup_{\eta\in\R}\Big[\big| \widetilde{\,\,k_\epsilon^2}(\alpha,\beta;\eta)\big|e^{\delta_0\langle\eta\rangle^{1/2}}\Big]\lesssim \int_{\alpha}^{\infty} \big[\epsilon(\gamma^2+|\gamma|)+\epsilon^{1/3}\big] e^{-c_1\epsilon(\gamma^3-\alpha^3)}|\widehat{\,\Psi\,}(\gamma+\beta)|\,d\gamma,
\end{equation}
and therefore
\begin{equation}\label{GKA37}
\sup_{\alpha\in\R}\int_\R\sup_{\eta\in\R}\Big[\big| \widetilde{\,\,k_\epsilon^2}(\alpha,\beta;\eta)\big|e^{\delta_0\langle\eta\rangle^{1/2}}\Big] d\beta\lesssim\sup_{\alpha\in\R}\int_{\alpha}^{\infty} \big[\epsilon\gamma^2+\epsilon^{1/3}\big]e^{-c_1\epsilon(\gamma^3-\alpha^3)}d\gamma\lesssim1.
\end{equation}
 
The proof of Proposition \ref{GKA11} is now complete.

 %
 %

\section{Limiting absorption principle for the Orr-Sommerfeld equation}\label{sec:lap}
In this section we prove the limiting absorption principle. We begin with the following bounds for solutions to the equation
\begin{equation}\label{LAP1}
\epsilon \partial_y^2w-\alpha w+i(b(y_0)-b(y))w=f(y),\quad y\in\R.
\end{equation}
In the above, we assume that $0<|\epsilon|<1/8$ and $\alpha\in\R$ with $\epsilon\alpha\ge0$.
\begin{proposition}\label{LAPM1}
Let $0<|\epsilon|<1/8$, $\alpha\in\R$ with $\epsilon\alpha\ge0$, $k\in\Z\backslash\{0\}$, $y_0\in\R$, and $f\in H^1(\R)$. Suppose that $w_{\epsilon,\alpha}(\cdot,y_0), \psi_{\epsilon,\alpha}(\cdot,y_0)\in H^2(\R)$ are the solution to the equations for $y\in\R$, 
\begin{equation}\label{LAPM2}
\begin{split}
&\epsilon \partial_y^2w_{\epsilon,\alpha}(y,y_0)-\alpha w_{\epsilon,\alpha}(y,y_0)+i(b(y_0)-b(y))w_{\epsilon,\alpha}(y,y_0)=f(y),\\
&-(k^2-\partial_y^2)\psi_{\epsilon,\alpha}(y,y_0)=w_{\epsilon,\alpha}(y,y_0).
\end{split}
\end{equation}
Then we have the following conclusions.

(i) We have the energy estimates
\begin{equation}\label{LAPM2.1}
\|(y-y_0)w_{\epsilon,\alpha}(y,y_0)\|_{L^2(y\in\R)}+\epsilon^{1/3}\|w_{\epsilon,\alpha}(y,y_0)\|_{L^2(y\in\R)}+\epsilon^{2/3}\|\partial_yw_{\epsilon,\alpha}(y,y_0)\|_{L^2(y\in\R)}\lesssim \|f\|_{L^2(\R)};
\end{equation}

(ii) Denoting $m:=\|h\|_{H^1(\R)}$, we have the pointwise bounds for $y\in\R$,
\begin{equation}\label{LAPM3}
\begin{split}
|w_{\epsilon,\alpha}(y,y_0)|&\lesssim |\epsilon|^{-1/3}\langle \epsilon^{-1/3}(y-y_0),\epsilon^{-1/3}\alpha\rangle^{-1}m,\\
|\partial_yw_{\epsilon,\alpha}(y,y_0)|&\lesssim \Big[|\epsilon|^{-1/2}\langle \epsilon^{-1/3}(y-y_0),\epsilon^{-1/3}\alpha\rangle^{-3/4}+|\epsilon|^{-2/3}\langle \epsilon^{-1/3}(y-y_0),\epsilon^{-1/3}\alpha\rangle^{-2}\Big]m,\\
|\partial^2_yw_{\epsilon,\alpha}(y,y_0)|&\lesssim \Big[|\epsilon|^{-5/6}\langle \epsilon^{-1/3}(y-y_0),\epsilon^{-1/3}\alpha\rangle^{-1/4}+|\epsilon|^{-1}\langle \epsilon^{-1/3}(y-y_0),\epsilon^{-1/3}\alpha\rangle^{-3/2}\Big]m;
\end{split}
\end{equation}

(iii) We have the bounds for the ``stream function" $\psi_{\epsilon,\alpha}$,
\begin{equation}\label{LAPM4}
\sup_{\xi\in\R}\Big[\langle k,\xi\rangle^2\big|\widehat{\,\,\psi_{\epsilon,\alpha}}(\xi,y_0)\big|\Big]\lesssim\|f\|_{H^1(\R)};
\end{equation}

(iv) In addition, we have the limiting behaviors

\begin{itemize}

\item If $\alpha_0\neq0$,
\begin{equation}\label{LAPM4.91}
\lim_{\epsilon\to0, \,z\to y_0,\, \alpha\to \alpha_0,\, \epsilon\alpha>0} w_{\epsilon,\alpha}(y,z)=\frac{-f(y)}{\alpha_0+i(b(y)-b(y_0))};
\end{equation}

\item 
\begin{equation}\label{LAPM4.1}
\lim_{\epsilon\to0+,\, \alpha\to 0+,\, z\to y_0} w_{\epsilon,\alpha}(y,z)={\rm P.V.}\frac{-f(y)}{i(b(y)-b(y_0))}-\pi \frac{f(y_0)}{b'(y_0)} \delta(y-y_0),
\end{equation}
for $y\in\R$, in the sense of distributions. 

\item Similarly for $y\in\R$, in the sense of distributions for $y\in\R$,
\begin{equation}\label{LAPM4.101}
\lim_{\epsilon\to0-,\, \alpha\to0-, \,z\to y_0} w_{\epsilon,\alpha}(y,z)={\rm P.V.}\frac{-f(y)}{i(b(y)-b(y_0))}+\pi \frac{f(y_0)}{b'(y_0)} \delta(y-y_0).
\end{equation}
\end{itemize}

\end{proposition}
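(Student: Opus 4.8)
The plan is to deduce all four assertions from the single representation
\[
w_{\epsilon,\alpha}(y,y_0)=\int_\R k_{\epsilon,\alpha}^\ast(y,z;y_0)\,f(z)\,dz,
\]
where $k_{\epsilon,\alpha}^\ast$ is the Airy kernel of Lemma \ref{GKA1} solving \eqref{GKA1.1}, together with the pointwise bounds \eqref{GKA2}--\eqref{GKA3} and the symmetry $k_{\epsilon,\alpha}^\ast(y,z;y_0)=k_{\epsilon,\alpha}^\ast(z,y;y_0)$, which holds since the operator in \eqref{GKA1.1} is formally symmetric. For part (i), I would either apply Schur's test directly to \eqref{GKA2}--\eqref{GKA3} — one checks $\sup_z\int_\R|k_{\epsilon,\alpha}^\ast(y,z;y_0)|\,dy$ and $\sup_y\int_\R|k_{\epsilon,\alpha}^\ast(y,z;y_0)|\,dz$ are $\lesssim_\sigma\epsilon^{-1/3}$, the analogous quantities for $\partial_y k_{\epsilon,\alpha}^\ast$ are $\lesssim_\sigma\epsilon^{-2/3}$, and those for $(y-y_0)k_{\epsilon,\alpha}^\ast$ are $\lesssim_\sigma1$ (splitting $|y-y_0|\le|y-z|+|z-y_0|$ and absorbing the $|y-z|$ factor into the exponential) — or, self-containedly, multiply \eqref{LAPM2} by $\overline{w_{\epsilon,\alpha}}$ and by $(b(y_0)-b(y))\overline{w_{\epsilon,\alpha}}$, take imaginary parts, and close using $|b(y_0)-b(y)|\approx_\sigma|y-y_0|$ together with the Airy rescaling $y-y_0=\epsilon^{1/3}Y$.

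For part (ii), the bound on $w_{\epsilon,\alpha}$ is immediate from $|w_{\epsilon,\alpha}(y,y_0)|\le\|f\|_{L^\infty}\int_\R|k_{\epsilon,\alpha}^\ast(y,z;y_0)|\,dz\lesssim_\sigma\|f\|_{L^\infty}\,\epsilon^{-1/3}\langle\epsilon^{-1/3}(y-y_0),\epsilon^{-1/3}\alpha\rangle^{-1}$ and $\|f\|_{L^\infty}\lesssim\|f\|_{H^1}$. The sharper bounds on $\partial_yw_{\epsilon,\alpha}$ and $\partial_y^2w_{\epsilon,\alpha}$ I would obtain by bootstrapping the equation: from \eqref{LAPM2}, $\epsilon\partial_y^2w_{\epsilon,\alpha}=f+\alpha w_{\epsilon,\alpha}-i(b(y_0)-b(y))w_{\epsilon,\alpha}$, so the $w_{\epsilon,\alpha}$-bound and \eqref{LAPM2.1} control $\partial_y^2w_{\epsilon,\alpha}$ in $L^2$ of an interval of length $\sim\epsilon^{1/3}\langle\epsilon^{-1/3}(y-y_0),\epsilon^{-1/3}\alpha\rangle^{-1/2}$ around $y$; then the interpolation inequality \eqref{BKG14} on such an interval, combined with the localized $L^2$ bounds furnished by the local-to-global estimates of Lemma \ref{BKGL1}, gives the pointwise bound on $\partial_yw_{\epsilon,\alpha}$, and differentiating the equation once more yields the bound on $\partial_y^2w_{\epsilon,\alpha}$.

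For part (iii), note that $-(k^2-\partial_y^2)\psi_{\epsilon,\alpha}=w_{\epsilon,\alpha}$ with a translation-invariant operator, so $\widehat{\psi_{\epsilon,\alpha}}(\xi,y_0)=-(k^2+\xi^2)^{-1}\widehat{w_{\epsilon,\alpha}}(\xi,y_0)$ and \eqref{LAPM4} reduces to $\sup_{\xi\in\R}|\widehat{w_{\epsilon,\alpha}}(\xi,y_0)|\lesssim\|f\|_{H^1}$. This is the step I expect to be the main obstacle: the pointwise bound in (ii) only gives $\|w_{\epsilon,\alpha}(\cdot,y_0)\|_{L^1}\lesssim|\log\epsilon|\,\|f\|_{H^1}$, so one cannot dominate the Fourier transform by the $L^1$ norm and must instead exploit cancellation in the kernel. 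The plan is to split $k_{\epsilon,\alpha}^\ast=\mathcal{M}_{\epsilon,\alpha}+\mathcal{R}_{\epsilon,\alpha}$, where $\mathcal{M}_{\epsilon,\alpha}$ isolates the principal-value-type singularity at $y=z$ (which, after a renormalization analogous to that in Section \ref{hrn}, produces a Hilbert-transform-like operator whose $\mathcal{F}L^\infty$-bound is controlled using that $f\in H^1(\R)$ embeds into $\mathcal{F}L^1(\R)$, since $\|\widehat f\|_{L^1}\le\|\langle\xi\rangle^{-1}\|_{L^2}\|\langle\xi\rangle\widehat f\|_{L^2}\lesssim\|f\|_{H^1}$), while the smoother remainder $\mathcal{R}_{\epsilon,\alpha}$ is absorbed by the energy estimates \eqref{LAPM2.1}. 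The limiting identities in part (iv) dictate the precise form of the extracted singular piece $\mathcal{M}_{\epsilon,\alpha}$.

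For part (iv), by the symmetry of the kernel, for $\varphi\in C_0^\infty(\R)$ one has $\int_\R w_{\epsilon,\alpha}(y,z)\varphi(y)\,dy=\int_\R f(\zeta)\,W_{\epsilon,\alpha}[\varphi](\zeta)\,d\zeta$, where $W_{\epsilon,\alpha}[\varphi]$ solves \eqref{LAPM2} with right-hand side $\varphi$; by parts (i)--(ii) (applied with $\|\varphi\|_{H^1}$) the family $W_{\epsilon,\alpha}[\varphi]$ is uniformly bounded pointwise and in the weighted $L^2$ norm, so it suffices to identify its limit for smooth compactly supported $\varphi$ and then pass to the limit against $f$ (using $f\in H^1\hookrightarrow C^{1/2}$ to pair the surviving principal value with $f\varphi$). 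When $\alpha_0\neq0$, write $W_{\epsilon,\alpha}[\varphi]=G_\alpha+r_{\epsilon,\alpha}$ with $G_\alpha(\zeta):=-\varphi(\zeta)\big(\alpha+i(b(\zeta)-b(z))\big)^{-1}$; since $|\alpha+i(b(\zeta)-b(z))|\ge|\alpha|$, $G_\alpha$ is smooth and bounded and $r_{\epsilon,\alpha}$ solves \eqref{LAPM2} with right-hand side $\epsilon\partial_\zeta^2G_\alpha\to0$ in $L^2$, giving \eqref{LAPM4.91}. When $\alpha_0=0$ with $\alpha\to0+$ (hence $\epsilon\to0+$ by $\epsilon\alpha\ge0$), I would use the Sokhotski--Plemelj identity $(\alpha+is)^{-1}\to\pi\delta(s)+\mathrm{P.V.}\,(is)^{-1}$ together with $\delta(b(\zeta)-b(y_0))=b'(y_0)^{-1}\delta(\zeta-y_0)$; the small viscous term $\epsilon\partial_y^2$ does not change the sign of the resulting delta because for $\epsilon>0$ it regularizes in the same half-plane as $\alpha>0$. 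This yields \eqref{LAPM4.1}, and \eqref{LAPM4.101} follows by complex conjugation. The delicate point in (iv) is the regime where $\alpha$ and $\epsilon$ tend to zero comparably, so that the $-\alpha$ and $\epsilon\partial_y^2$ terms regularize simultaneously and one must verify they act consistently; that, and the uniform $\mathcal{F}L^\infty$ bound in (iii), are where I expect the genuine difficulty to lie.
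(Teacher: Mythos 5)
Your overall architecture (kernel bounds from Lemma \ref{GKA1} for (i)--(ii), reduction of (iii) to a uniform $\mathcal{F}L^\infty$ bound on $w_{\epsilon,\alpha}$, a Plemelj-type limit for (iv)) matches the paper's, but at the two places you yourself flag as "the genuine difficulty" the proposal stops at a plan rather than an argument, and these are precisely the steps that carry the content of the proposition. The missing idea is the frozen-coefficient decomposition \eqref{LAP13}--\eqref{LAP14}: the paper writes $w_{\epsilon,\alpha}=w_{1\epsilon,\alpha}+w_{2\epsilon,\alpha}$, where $w_{1\epsilon,\alpha}$ solves the constant-coefficient Airy equation with the \emph{constant} right-hand side $f(y_0)$ and is computed exactly on the Fourier side, $\widehat{W}(\xi)=-\sqrt{2\pi}\,e^{\xi^3/3+\epsilon^{-1/3}\alpha(b'(y_0))^{-2/3}\xi}\,{\bf 1}_{(-\infty,0]}(\xi)$ as in \eqref{LAP16}--\eqref{LAP18}, while $w_{2\epsilon,\alpha}$ (driven by $f(y)-f(y_0)$ and the coefficient errors, both vanishing at $y_0$) obeys the improved pointwise bound \eqref{LAP15} and is uniformly in $L^1$ near $y_0$. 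This single decomposition delivers both the uniform bound $\sup_\xi|\widehat{w_{\epsilon,\alpha}}(\xi)|\lesssim\|f\|_{H^1}$ needed for \eqref{LAPM4} and, upon integrating $w_{1\epsilon,\alpha}$ against a cutoff at an intermediate scale as in \eqref{LAP20.01}, the exact coefficient $-\pi f(y_0)/b'(y_0)$ of the delta in \eqref{LAPM4.1}; the half-line support of $\widehat{W}$ is exactly what shows the viscous and $\alpha$-regularizations select the same boundary value. Your "Hilbert-transform-like singular piece plus $H^1\hookrightarrow\mathcal{F}L^1$" and "Sokhotski--Plemelj, with the viscosity regularizing in the same half-plane" describe what the answer should look like but do not establish it uniformly in the regime $\epsilon\sim\alpha\to0$, which is where the proposition is actually used.

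A secondary issue is part (ii): bootstrapping $\epsilon\partial_y^2w=f+\alpha w-i(b(y_0)-b(y))w$ and interpolating via \eqref{BKG14} on intervals of length $\sim\epsilon^{1/3}\langle\epsilon^{-1/3}(y-y_0),\epsilon^{-1/3}\alpha\rangle^{-1/2}$, with only the global energy bound \eqref{LAPM2.1} as input for $\|\partial_yw\|_{L^2}$, yields $|\partial_yw|\lesssim|\epsilon|^{-5/6}\langle\cdot\rangle^{1/4}+|\epsilon|^{-2/3}\langle\cdot\rangle^{-1/2}$, which is strictly weaker than \eqref{LAPM3} outside the critical layer (Lemma \ref{BKGL1} bounds the kernel, not $w$, so it does not directly supply the localized $L^2$ control you would need). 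The paper's route is simpler and sharp: differentiate the equation to get \eqref{LAP7}, so that $\partial_yw_{\epsilon,\alpha}$ is the Airy resolvent applied to $\partial_yf+ib'w_{\epsilon,\alpha}$, then use Cauchy--Schwarz with the $L^2_z$ norm of the kernel \eqref{GKA2} against $\partial_yf\in L^2$ (producing the $|\epsilon|^{-1/2}\langle\cdot\rangle^{-3/4}$ term) and the $L^1_z$ norm against the already-proved pointwise bound on $w_{\epsilon,\alpha}$ (producing the $|\epsilon|^{-2/3}\langle\cdot\rangle^{-2}$ term). You should adopt that argument; the rest of your part (i) and the duality reduction in (iv) for $\alpha_0\neq0$ are fine.
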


\begin{remark}
We note that the bound 
\begin{equation}\label{LAPM3.1}
|\epsilon||\partial^2_yw_{\epsilon,\alpha}(y,y_0)|\lesssim \Big[|\epsilon|^{1/6}\langle \epsilon^{-1/3}(y-y_0),\epsilon^{-1/3}\alpha\rangle^{-1/4}+\langle \epsilon^{-1/3}(y-y_0),\epsilon^{-1/3}\alpha\rangle^{-3/2}\Big]\|f\|_{H^1(\R)}
\end{equation}
implies that the singular perturbation term $\epsilon \partial_y^2w$ is ``negligible" outside the ``critical region" $|y-y_0|\lesssim |\epsilon|^{1/3}$, for sufficiently small $\epsilon$.

\end{remark}

\begin{proof}
For concreteness, we assume that $0<\epsilon<1/8$ and correspondingly $\alpha\ge0$, as the other case is completely analogous. We organize the proof into several steps. 

{\it Step 1: Proof of \eqref{LAPM2.1}.} We first give the proof of \eqref{LAPM2.1}. By integration by parts, we have
\begin{equation}\label{LAPM0.34}
\begin{split}
&-\epsilon\int_\R|\partial_yw_{\epsilon,\alpha}(y,y_0)|^2dy-\alpha\int_\R |w_{\epsilon,\alpha}(y,y_0)|^2\,dy+i\int_\R (b(y_0)-b(y))|w_{\epsilon,\alpha}(y,y_0)|^2dy\\
&=\int_\R f(y)\overline{w_{\epsilon,\alpha}(y,y_0)}\,dy,\\
&-\epsilon \int_\R (b(y_0)-b(y))|\partial_yw_{\epsilon,\alpha}(y,y_0)|^2dy-\alpha\int_\R(b(y_0)-b(y))|w_{\epsilon,\alpha}(y,y_0)|^2dy\\
&+\epsilon\int_\R \partial_yw_{\epsilon,\alpha}(y,y_0)b'(y)\overline{w_{\epsilon,\alpha}(y,y_0)}\,dy+i\int_\R|b(y)-b(y_0)|^2|w_{\epsilon,\alpha}(y,y_0)|^2dy\\
&=\int_\R f(y)(b(y_0)-b(y))\overline{w_{\epsilon,\alpha}(y,y_0)}\,dy.
\end{split}
\end{equation}
It follows from \eqref{LAPM0.34} that
\begin{equation}\label{LAP35}
\begin{split}
&\epsilon\int_\R |\partial_yw_{\epsilon,\alpha}(y,y_0)|^2dy\leq \|f\|_{L^2(\R)}\|w_{\epsilon,\alpha}(y,y_0)\|_{L^2(y\in\R)},\\
&\int_\R |y-y_0|^2|w_{\epsilon,\alpha}(y,y_0)|^2dy\\
&\lesssim \epsilon\int_\R |\partial_yw_{\epsilon,\alpha}(y,y_0)||w_{\epsilon,\alpha}(y,y_0)|\,dy+\|f\|_{L^2(\R)}\||y-y_0|w_{\epsilon,\alpha}(y,y_0)\|_{L^2(y\in\R)}.
\end{split}
\end{equation}
By the interpolation inequality 
\begin{equation}\label{LAPM0.36}
\|w_{\epsilon,\alpha}(y,y_0)\|^2_{L^2(\R)}\lesssim \||y-y_0|w_{\epsilon,\alpha}(y,y_0)\|_{L^2(y\in\R)}\|\partial_yw_{\epsilon,\alpha}(y,y_0)\|_{L^2(y\in\R)},
\end{equation}
we obtain from \eqref{LAP35} that
\begin{equation}\label{LAPM0.365}
\begin{split}
\|w_{\epsilon,\alpha}(y,y_0)\|_{L^2(y\in\R)}&\lesssim  \||y-y_0|w_{\epsilon,\alpha}(y,y_0)\|_{L^2(y\in\R)}^{1/2}\|\partial_yw_{\epsilon,\alpha}(y,y_0)\|_{L^2(y\in\R)}^{1/2}\\
&\lesssim \epsilon^{-1/4}\|f\|_{L^2(\R)}^{1/4}       
\||y-y_0|w_{\epsilon,\alpha}(y,y_0)\|_{L^2(y\in\R)}^{1/2} \|w_{\epsilon,\alpha}(y,y_0)\|_{L^2(y\in\R)}^{1/4},
\end{split}
\end{equation}
which implies that 
\begin{equation}\label{LAPM0.366}
\|w_{\epsilon,\alpha}(y,y_0)\|_{L^2(y\in\R)}\lesssim \epsilon^{-1/3}\||y-y_0|w_{\epsilon,\alpha}(y,y_0)\|_{L^2(y\in\R)}^{2/3}\|f\|_{L^2(\R)}^{1/3}.
\end{equation}
It follows from \eqref{LAPM0.366} and \eqref{LAP35} that
\begin{equation}\label{LAPM0.37}
\begin{split}
&\int_\R |y-y_0|^2|w_{\epsilon,\alpha}(y,y_0)|^2dy+\epsilon^{4/3}\int_\R |\partial_yw_{\epsilon,\alpha}(y,y_0)|^2dy\\
&\lesssim_\sigma  \|f\|_{L^2(\R)}\||y-y_0|w_{\epsilon,\alpha}(y,y_0)\|_{L^2(y\in\R)}+\epsilon^{1/3}\|f\|_{L^2(\R)}\|w_{\epsilon,\alpha}(y,y_0)\|_{L^2(y\in\R)}\\
&\lesssim_\sigma \|f\|_{L^2(\R)}\||y-y_0|w_{\epsilon,\alpha}(y,y_0)\|_{L^2(y\in\R)}\\
&\qquad+\epsilon^{1/3}\|f\|_{L^2(\R)}\||y-y_0|w_{\epsilon,\alpha}(y,y_0)\|^{1/2}_{L^2(y\in\R)}\|\partial_yw_{\epsilon,\alpha}(y,y_0)\|^{1/2}_{L^2(y\in\R)}\\
&\lesssim_\sigma \|f\|_{L^2(\R)}\bigg[\int_\R |y-y_0|^2|w_{\epsilon,\alpha}(y,y_0)|^2dy+\epsilon^{4/3}\int_\R|\partial_yw_{\epsilon,\alpha}(y,y_0)|^2dy\bigg]^{1/2}.
\end{split}
\end{equation}
The desired bounds \eqref{LAPM2.1} follow from \eqref{LAPM0.37} and \eqref{LAPM0.36}.

 We now turn to the proof of (ii)-(iv). We can assume that $f\in H^1(\R)$ with $\|f\|_{H^1(\R)}=1$.

{\it Step 2: Proof of \eqref{LAPM3}.} The bounds on $w_{\epsilon,\alpha}$ in \eqref{LAPM3} follow from the kernel estimates \eqref{GKA2} and $\|f\|_{L^\infty(\R)}\lesssim1$. To prove the bounds on the derivatives of $w_{\epsilon,\alpha}$ in \eqref{LAPM3}, we take derivatives in $y$ in \eqref{LAP1} to obtain that  for $y\in\R$,
\begin{equation}\label{LAP7}
\epsilon \partial_y^2\partial_yw_{\epsilon,\alpha}(y,y_0)-\alpha\partial_yw_{\epsilon,\alpha}(y,y_0)+i(b(y_0)-b(y))\partial_yw_{\epsilon,\alpha}(y,y_0)=\partial_yf+ib'(y)w_{\epsilon,\alpha}(y,y_0).
\end{equation}
The desired bounds then follow from the kernel estimates \eqref{GKA2}, $\|\partial_yf\|_{L^2(\R)}\leq1$ and the pointwise bounds on $w_{\epsilon,\alpha}$ in \eqref{LAPM3}.

{\it Step 3: Proof of \eqref{LAPM4}.} To capture the singular behavior of $w_{\epsilon,\alpha}$ for $|y-y_0|\lesssim\epsilon^{1/3}$ more precisely, we rewrite \eqref{LAPM2} as 
\begin{equation}\label{LAP12}
\begin{split}
&\epsilon\partial_y^2w_{\epsilon,\alpha}(y,y_0)-\alpha w_{\epsilon,\alpha}(y,y_0)-ib'(y_0)(y-y_0)w_{\epsilon,\alpha}(y,y_0)\\
&=f(y)-i\big[b'(y_0)(y-y_0)-(b(y)-b(y_0))\big]w_{\epsilon,\alpha}(y,y_0),
\end{split}
\end{equation}
and decompose for $y\in\R$,
\begin{equation}\label{LAP13}
w_{\epsilon,\alpha}(y,y_0):=w_{1\epsilon,\alpha}(y,y_0)+w_{2\epsilon,\alpha}(y,y_0),
\end{equation}
where $w_{1\epsilon,\alpha}$ and $w_{2\epsilon,\alpha}$ solve the equations
\begin{equation}\label{LAP14}
\begin{split}
&\epsilon\partial_y^2w_{1\epsilon,\alpha}(y,y_0)-\alpha w_{1\epsilon,\alpha}(y,y_0)-ib'(y_0)(y-y_0)w_{1\epsilon,\alpha}(y,y_0)=f(y_0),\\
&\epsilon\partial_y^2w_{2\epsilon,\alpha}(y,y_0)-\alpha w_{2\epsilon,\alpha}(y,y_0)-ib'(y_0)(y-y_0)w_{2\epsilon,\alpha}(y,y_0)\\
&=f(y)-f(y_0)-i\big[b'(y_0)(y-y_0)-(b(y)-b(y_0))\big]w_{\epsilon,\alpha}(y,y_0).\\
\end{split}
\end{equation}
It follows from the kernel estimates \eqref{GKA2}, $\|f\|_{H^1(\R)}=1$, and the bound \eqref{LAPM3} that for $y\in\R$ with $|y-y_0|<1$,
\begin{equation}\label{LAP15}
|w_{2\epsilon,\alpha}(y,y_0)|\lesssim \epsilon^{-1/6}\langle\epsilon^{-1/3}(y-y_0),\epsilon^{-1/3}\alpha\rangle^{-1/2}.
\end{equation}
By rescaling, we have for $y\in\R$,
\begin{equation}\label{LAP16}
w_{1\epsilon,\alpha}(y)=\epsilon^{-1/3}f(y_0)[b'(y_0)]^{-2/3}W\big(\epsilon^{-1/3}(b'(y_0))^{1/3}(y-y_0)\big),
\end{equation}
where $W$ satisfies the equation for $Y\in\R$,
\begin{equation}\label{LAP17}
\partial_Y^2W(Y)-\epsilon^{-1/3}\alpha(b'(y_0))^{-2/3}W(Y)-iYW(Y)=1.
\end{equation}
In the above we have suppressed the dependence of $W$ on $\epsilon, \alpha$. Equation \eqref{LAP17} can be solved explicitly using Fourier transforms, and we have
\begin{equation}\label{LAP18}
\widehat{W}(\xi)=-\sqrt{2\pi}\,e^{\xi^3/3+\epsilon^{-1/3}\alpha(b'(y_0))^{-2/3}\xi}\,{\bf 1}_{(-\infty,0]}(\xi),\qquad{\rm for}\,\,\xi\in\R.
\end{equation}
The bounds \eqref{LAPM2.1}-\eqref{LAPM3}, the decomposition \eqref{LAP13} and the bounds \eqref{LAP15}, the identity \eqref{LAP16} and the bound \eqref{LAP18} imply that
\begin{equation}\label{LAP19}
\sup_{\xi\in\R}|\widehat{\,\,w_{\epsilon,\alpha}}(\xi)|\lesssim_\sigma 1.
\end{equation}
The desired bound \eqref{LAPM4} follows from \eqref{LAP19}.

{\it Step 4: Proof of \eqref{LAPM4.91}-\eqref{LAPM4.101}.} We first note that \eqref{LAPM4.91} follows from the bound \eqref{LAPM3} and the equation \eqref{LAPM2}. 

We now turn to the proof of the limit \eqref{LAPM4.1}. By equation \eqref{LAPM2} and the bounds \eqref{LAPM3}, we only need to consider the region $|y-z|<1/2$. We fix an even cutoff function $\Phi\in C_0^{\infty}(-2,2)$ with $\Phi\equiv 1$ on $[-1,1]$. For any $\varphi\in C_0^\infty(\R)$, we calculate, using the bound \eqref{LAPM3}, equation \eqref{LAPM2} and \eqref{LAP13}-\eqref{LAP18} that for any $\varphi\in C_0^\infty(-1,1)$,
\begin{equation}\label{LAP20}
\begin{split}
&\lim_{\epsilon,\alpha\to0+,z\to y_0}\int_\R w_{\epsilon,\alpha}(y,z)\varphi(y)\,dy=\lim_{\epsilon,\alpha\to0+,z\to y_0}\bigg[\int_\R \frac{-f(y)\varphi(y)}{i(b(y)-b(z))}(1-\Phi((y-z)/(\epsilon+\alpha)^{1/4}))\,dy\\
&\hspace{3.1in}+\varphi(y_0)\int_\R w_{1\epsilon,\alpha}(y,z)\Phi((y-z)/(\epsilon+\alpha)^{1/4})\,dy\bigg],
\end{split}
\end{equation}
and therefore,
\begin{equation}\label{LAP20.01}
\begin{split}
&\lim_{\epsilon,\alpha\to0+,z\to y_0}\int_\R w_{\epsilon,\alpha}(y,z)\varphi(y)\,dy-{\rm P.V.}\int_\R \frac{-f(y)\varphi(y)}{i(b(y)-b(y_0))}\,dy\\
&=\varphi(y_0)\lim_{\epsilon,\alpha\to0+,z\to y_0}\int_\R w_{1\epsilon,\alpha}(y,z)\Phi((y-z)/(\epsilon+\alpha)^{1/4})\,dy\\
&=\varphi(y_0)f(y_0)[b'(y_0)]^{-2/3}\lim_{\epsilon,\alpha\to0+,z\to y_0}\int_\R \epsilon^{-1/3}W\big(\epsilon^{-1/3}(b'(z))^{1/3}(y-z)\big)\Phi\big((y-z)/(\epsilon+\alpha)^{1/4}\big)\,dy\\
&=-\sqrt{2\pi}\,\varphi(y_0)\frac{f(y_0)}{b'(y_0)}\int_{-\infty}^0\widehat{\,\Phi\,}(\xi)\,d\xi=-\pi\varphi(y_0)\frac{f(y_0)}{b'(y_0)}.
\end{split}
\end{equation}
We note that the cutoff length we used, $\epsilon^{1/4}$, is somewhat arbitrary, and we can use any cutoff scale $d$ with $\max\{\epsilon^{1/3},\alpha\}\ll d\ll 1$, to separate the critical region which is of the size $\epsilon^{1/3}$ from the more regular region. Since $\varphi\in C_0^\infty(-1,1)$ is arbitrary, we obtain the desired identity \eqref{LAP20}.
\end{proof}

For $\epsilon\in(-1/8,1/8)\backslash\{0\}$, $\alpha\in\R$ with $\epsilon\alpha\ge0$ and $y_0\in\R$, we define the operator $T_{\epsilon,\alpha,y_0}: H^1(\R)\to H^1(\R)$ as follows. For any $f\in H^1(\R)$, suppose $w_{\epsilon,\alpha}, \psi_{\epsilon,\alpha}$ are given as in \eqref{LAPM2}. Then we define
\begin{equation}\label{LAP21}
T_{\epsilon,\alpha,y_0}f(y):=\psi_{\epsilon,\alpha}(y,y_0),\qquad{\rm for}\,\,y\in\R.
\end{equation}
The bounds \eqref{LAPM3}-\eqref{LAPM4} imply that $T_{\epsilon,\alpha,y_0}: H^1(\R)\to H^1(\R)$ is compact. 

We are now ready to establish the following limiting absorption principle which plays a fundamental role in controlling the solutions to the Orr-Sommerfeld equation in the high Reynolds number regime. 
\begin{proposition}\label{LAP22}
There exist $\epsilon_0>0$ and $\kappa>0$, such that for all $\epsilon\in(-\epsilon_0,\epsilon_0)\backslash\{0\}$, $\alpha\in\R$ with $\epsilon\alpha\ge0$, $k\in\Z\backslash\{0\}$, $y_0\in\R$, and $\psi\in H^1(\R)$, we have the bound
\begin{equation}\label{LAP23}
\|\psi+T_{\epsilon,\alpha,y_0}(ib''\psi)\|_{H_k^1(\R)}\ge \kappa \|\psi\|_{H_k^1(\R)}.
\end{equation}
\end{proposition}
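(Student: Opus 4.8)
The plan is to argue by contradiction, exploiting the compactness of $T_{\epsilon,\alpha,y_0}$ together with the spectral assumption (Assumption \ref{MaAs}) that the inviscid linearized operator $L_k$ has no discrete eigenvalues, i.e.\ purely continuous spectrum $\R$. Suppose the bound \eqref{LAP23} fails. Then there exist sequences $\epsilon_n\to0$, $\alpha_n\in\R$ with $\epsilon_n\alpha_n\ge0$, $y_{0,n}\in\R$, $k_n\in\Z\backslash\{0\}$ (which we may assume constant $=k$ by the uniformity in $k$ and by first fixing $k$), and $\psi_n\in H^1_k(\R)$ with $\|\psi_n\|_{H^1_k(\R)}=1$ such that
\begin{equation}\label{LAPcontra1}
\|\psi_n+T_{\epsilon_n,\alpha_n,y_{0,n}}(ib''\psi_n)\|_{H^1_k(\R)}\to0.
\end{equation}
First I would address the behavior of $\alpha_n$ and $y_{0,n}$: since $b''$ is compactly supported in $[-1/\sigma_0,1/\sigma_0]$ by \eqref{asp1}, the relevant part of $\psi_n$ is the one paired against $b''$; after translating we may assume $y_{0,n}$ stays in a bounded set (or, in the case $y_{0,n}\to\infty$, use the kernel decay \eqref{GKA2} to see $T_{\epsilon_n,\alpha_n,y_{0,n}}(ib''\psi_n)\to0$ in a suitable sense, contradicting \eqref{LAPcontra1} and $\|\psi_n\|=1$ once one controls $\psi_n$ away from the support of $b''$ via the elliptic equation $-(k^2-\partial_y^2)\psi_n=\omega_n$). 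Passing to a subsequence, $\alpha_n\to\alpha_0\in[0,\infty]$ and $y_{0,n}\to y_0$.

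The key step is a compactness argument. Writing $g_n:=ib''\psi_n$, the factor $b''$ localizes $g_n$ to a fixed compact interval, and $\|g_n\|_{H^1}\lesssim\|\psi_n\|_{H^2(\text{loc})}\lesssim\|\psi_n\|_{H^1_k}=1$ using elliptic regularity for $-(k^2-\partial_y^2)\psi_n=\omega_n$ together with the first equation of \eqref{LAPM2} to bound $\omega_n$; more carefully one uses the energy bound \eqref{LAPM2.1} and the pointwise bounds \eqref{LAPM3}. By Rellich, a subsequence of $g_n$ converges strongly in $L^2$ (indeed in $H^s$ for $s<1$) to some $g_\infty$ with compact support. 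Now I would pass to the limit in $T_{\epsilon_n,\alpha_n,y_{0,n}}g_n$: by the limiting-absorption statements \eqref{LAPM4.91} (if $\alpha_0\ne0$) or \eqref{LAPM4.1}/\eqref{LAPM4.101} (if $\alpha_0=0$), the solution operators $w_{\epsilon_n,\alpha_n}$ converge in the sense of distributions, and combined with the uniform bound \eqref{LAPM4} on the stream function $\|\langle k,\xi\rangle^2\widehat{\psi_{\epsilon_n,\alpha_n}}\|_{L^\infty}\lesssim\|g_n\|_{H^1}\lesssim1$, one gets strong $H^1_k$ convergence of $T_{\epsilon_n,\alpha_n,y_{0,n}}g_n$ to $T_{0,\alpha_0,y_0}g_\infty$, where the latter is the Rayleigh-type solution operator: $w=-g_\infty/(\alpha_0+i(b(y)-b(y_0)))$ (with the principal value / delta correction when $\alpha_0=0$) and $-(k^2-\partial_y^2)\psi=w$. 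From \eqref{LAPcontra1} it then follows that $\psi_n\to\psi_\infty:=-T_{0,\alpha_0,y_0}g_\infty$ strongly in $H^1_k$, so $\|\psi_\infty\|_{H^1_k}=1$, and $\psi_\infty$ satisfies
\begin{equation}\label{LAPcontra2}
\psi_\infty+T_{0,\alpha_0,y_0}(ib''\psi_\infty)=0.
\end{equation}

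The final step is to show \eqref{LAPcontra2} is impossible, which is exactly where Assumption \ref{MaAs} enters. Unwinding the definition of $T_{0,\alpha_0,y_0}$, equation \eqref{LAPcontra2} says that $\psi_\infty$ is a nontrivial $H^1_k$ solution of the Rayleigh equation with spectral parameter $b(y_0)+i\alpha_0$ (or its boundary value as $\alpha_0\to0^{\pm}$): $-(k^2-\partial_y^2)\psi_\infty = \frac{b''\psi_\infty}{\,b(y)-b(y_0)-i\alpha_0\,}$ in the limiting-absorption sense. If $\alpha_0\ne0$ this directly produces a discrete eigenvalue $b(y_0)+i\alpha_0\notin\R$ (or with nonzero imaginary part) of $L_k$, contradicting that its spectrum is $\R$. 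If $\alpha_0=0$, one uses the limiting absorption formula \eqref{LAPM4.1} (which contains the $-\pi f(y_0)/b'(y_0)\,\delta(y-y_0)$ term) to derive an obstruction: testing the equation against $\overline{\psi_\infty}$ and taking imaginary parts isolates the delta contribution, forcing $\psi_\infty(y_0)=0$; one then promotes this to $\psi_\infty\equiv0$ either by an ODE uniqueness argument or by invoking that the absence of embedded eigenvalues/resonances is equivalent to the spectral assumption (this is the standard reduction, as in the treatment of the Rayleigh equation referenced in the introduction). Either way we reach $\psi_\infty\equiv0$, contradicting $\|\psi_\infty\|_{H^1_k}=1$.

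I expect the main obstacle to be the passage to the limit in the operator $T_{\epsilon_n,\alpha_n,y_{0,n}}$ uniformly over $y_{0,n}$, together with carefully handling the degenerate case $\alpha_0=0$ (and the non-compact case $y_{0,n}\to\infty$): one must leverage the precise kernel decay \eqref{GKA2}–\eqref{GKA3} and the singular limits \eqref{LAPM4.1}–\eqref{LAPM4.101} to ensure the limiting operator is genuinely the Rayleigh solution operator and that the delta-type term is correctly captured, since it is precisely this term that encodes the spectral hypothesis. The uniformity of $\kappa$ in $\epsilon$ follows automatically from the contradiction scheme once the compactness is set up; uniformity in $k$ requires a separate, easier argument treating large $|k|$ directly via the factor $|k|$ in the $H^1_k$ norm (for $|k|$ large the nonlocal term $T_{\epsilon,\alpha,y_0}(ib''\cdot)$ is a small perturbation because of the $e^{-\delta_0|k||v-v'|}$ decay in \eqref{BSDG4.006}).
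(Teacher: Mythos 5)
Your proposal follows essentially the same route as the paper's proof: first dispose of the regimes $|k|$ large, $|y_0|$ large and $\alpha$ large directly via \eqref{LAPM4}, \eqref{LAPM3} and the kernel decay \eqref{GKA2}, then argue by contradiction using compactness of $T_{\epsilon,\alpha,y_0}$ to extract a strong $H^1$ limit $\psi$, pass to the limit with the identities \eqref{LAPM4.91} and \eqref{LAPM4.1}--\eqref{LAPM4.101}, and contradict Assumption \ref{MaAs}. One small correction to your $\alpha_0=0$ endgame: testing \eqref{LAP27} against $\overline{\psi_\infty}$ yields $b''(y_0)\psi_\infty(y_0)=0$ rather than $\psi_\infty(y_0)=0$, and its only role is to kill the delta term so that $w=(-k^2+\partial_y^2)\psi_\infty\in L^2(\R)$ becomes an embedded eigenfunction of $L_k$ with eigenvalue $b(y_0)$ (your second alternative, which is exactly what the paper does); the first alternative you offer, ``ODE uniqueness from $\psi_\infty(y_0)=0$,'' would not suffice on its own.
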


\begin{proof}
For the sake of concreteness, we assume that $\epsilon>0$, as the other case is completely analogous. By the bound \eqref{LAPM4} we can assume that $k=k_0\in\Z\backslash\{0\}$. For $\psi\in H^1(\R)$, assume that $w_{\epsilon,\alpha}(\cdot,y_0)\in H^2(\R)$ is the solution to 
$$\epsilon\partial_y^2w_{\epsilon,\alpha}(y,y_0)+i(b(y_0)-b(y))w_{\epsilon,\alpha}(y,y_0)=-ib''(y)\psi(y),\quad{\rm for}\,\,y\in\R.$$
The kernel estimates \eqref{GKA2} and the support assumption ${\rm supp}\,b''\Subset[-1/\sigma_0,1/\sigma_0]$ imply that for some $c_0=c_0(\sigma)\in(0,1)$ and all $|y_0|>2/\sigma_0$,
\begin{equation}\label{LAP23.01}
|w_{\epsilon,\alpha}(y,y_0)|\lesssim\epsilon^{-1/2}\langle y_0\rangle^{-1/2}\int_\R e^{-c_0\epsilon^{-1/2}|y_0|^{1/2}|y-z|}|\psi(z)|\,dz,\quad y\in\R.
\end{equation}
It follows from \eqref{LAP23.01} that 
\begin{equation}\label{LAP23.02}
\|w_{\epsilon,\alpha}\|_{L^2(\R)}\lesssim \langle y_0\rangle^{-1}\|\psi\|_{H^1_k(\R)}.
\end{equation}
The desired bounds \eqref{LAP23} then follow if $|y_0|\gg1/\sigma_0$. Thus we can assume that $|y_0|\lesssim1$. Similarly, by the bound \eqref{LAPM3} we can assume that $0<\alpha\lesssim1$. In summary, it suffices to consider the case
\begin{equation}\label{LAP23.001}
k=k_0\in\Z\backslash\{0\},\quad |y_0|\lesssim1, \quad 0<\alpha\lesssim1.
\end{equation}
 
Suppose the inequality \eqref{LAP23} does not hold, then we can find a sequence $\epsilon_j\to0+$, $0<\alpha_j\lesssim1$, $y_{0j}\in\R$ with $|y_{0j}|\lesssim1$, and $\psi_j\in H^1(\R)$ with $\|\psi_j\|_{H^1(\R)}=1$ for $j\ge1$, such that 
\begin{equation}\label{LAP24}
\|\psi_{j}+T_{\epsilon_j,\alpha_j,y_{0j}}(ib''\psi_j)\|_{H^1(\R)}\to 0+, \qquad{\rm as}\,\,j\to\infty.
\end{equation}
By \eqref{LAP23.001} and the bound \eqref{LAPM4}, we can assume (by passing to a subsequence) that for some $\psi\in H^1(\R)$ with $\|\psi\|_{H^1(\R)}=1$,
\begin{equation}\label{LAP25}
y_{0j}\to y_0,\quad \alpha_j\to\alpha\ge0, \quad {\rm and}\quad \lim_{j\to\infty}\|\psi_{j}- \psi\|_{H^1(\R)}=0.
\end{equation}
We conclude from \eqref{LAP24}-\eqref{LAP25} that
\begin{equation}\label{LAP26}
\psi(y)+i\lim_{j\to\infty}T_{\epsilon_j,\alpha_j,y_{0j}}(b''\psi)(y)=0, \qquad{\rm for}\,\,y\in\R.
\end{equation}

We distinguish two cases, $\alpha=0$ and $\alpha>0$, and focus first on the case $\alpha=0$.
Applying the differential operator $-k^2+\partial_y^2$ to \eqref{LAP26}, and using the identity \eqref{LAPM4.1}, we get that in the sense of distributions,
\begin{equation}\label{LAP27}
(-k^2+\partial_y^2)\psi(y)-{\rm P.V.}\frac{b''(y)\psi(y)}{b(y)-b(y_0)}-i\pi \frac{b''(y_0)}{b'(y_0)}\psi(y_0)\delta(y-y_0)=0.
\end{equation}
Multiplying $\overline{\psi}$, integrating over $\R$ and taking the real part, we conclude that
\begin{equation}\label{LAP28}
b''(y_0)\psi(y_0)=0.
\end{equation}
Hence 
\begin{equation}\label{LAP29}
w:=(-k^2+\partial_y^2)\psi\in L^2(\R),
\end{equation}
and
\begin{equation}\label{LAP30}
(b(y)-b(y_0))w-b''(y)\psi=0.
\end{equation}
\eqref{LAP30} implies that $w\in L^2(\R)$ is an eigenfunction for the eigenvalue $b(y_0)$ of the operator $L_k$, which is a contradiction to our assumption that there are no discrete eigenvalues for $L_k$ in our main assumption \ref{MaAs}.

The case $\alpha>0$ is similar, and we can obtain a contradiction, using the identity \eqref{LAPM4.91} to get an eigenvalue $b(y_0)+i\alpha$ for $L_k$, and our assumption that there is no discrete eigenvalues for $L_k$. The theorem is now proved.

\end{proof}

\begin{remark}\label{LAP30.01}
Proposition \ref{LAP22} when $\alpha=0$ can be equivalently formulated as the following statement. The solution $\psi\in H^1(\R)$ to the equation 
\begin{equation}\label{LAP31}
\epsilon\partial_y^2(\partial_y^2-k^2)\psi-i(b(y)-b(y_0))(\partial_y^2-k^2)\psi+ib''(y)\psi=f(y),\quad{\rm for}\,\,y\in\R,
\end{equation}
can be bounded by the solution $\psi^\ast\in H^1(\R)$ to the equation
\begin{equation}\label{LAP32}
\epsilon\partial_y^2(\partial_y^2-k^2)\psi-i(b(y)-b(y_0))(\partial_y^2-k^2)\psi=f(y),\quad{\rm for}\,\,y\in\R,
\end{equation}
in the sense that 
\begin{equation}\label{LAP33}
\|\psi\|_{H^1_k(\R)}\lesssim \|\psi^\ast\|_{H^1_k(\R)}.
\end{equation}

\end{remark}


\section{Bounds on the spectral density function in Gevrey spaces}\label{sec:bts}
In this section we use the limiting absorption principle and commutator argument to establish bounds on the spectral density function in Gevrey spaces.

Recall the definitions \eqref{int11}-\eqref{int13}, and the equations \eqref{int12}. Assume that $k\in\Z\backslash\{0\}$ and $\nu\in(0,\nu_\ast)$ with $\nu_\ast$ being sufficiently small. We need to study the following equations for $\Upsilon_{k,\nu}(v,w):=\Omega_{k,\nu}(v+w,w)$ and $\Theta_{k,\nu}(v,w)$ with $v,w\in\R$, 
\begin{equation}\label{BSDG1}
\begin{split}
(\nu/k)(B^\ast(v+w))^2\partial_v^2\Upsilon_{k,\nu}(v,w)+(\nu/k) B^\ast(v+w)\partial_vB^\ast(v+w)&\partial_v\Upsilon_{k,\nu}(v,w)-iv\Upsilon_{k,\nu}(v,w)\\
\qquad+iB^\ast(v+w)\partial_vB^\ast(v+w)\Theta_{k,\nu}(v,w)&=F_{0k}(v+w),\\
\Big[-k^2+(B^\ast(v+w))^2\partial_v^2+B^\ast(v+w)\partial_vB^\ast(v+w)\partial_v\Big]\Theta_{k,\nu}(v,w)&=\Upsilon_{k,\nu}(v,w).
\end{split}
\end{equation}

The limiting absorption principle, see Proposition \ref{LAP22}, can be reformulated for the system of equations \eqref{int12} in the variables $v,w$ as follows. 
\begin{proposition}\label{BSDG2}
There exists $\nu_\ast\in(0,1)$ sufficiently small, such that the following statement holds. Assume that $\nu\in(0,\nu_\ast)$, $k\in\Z\backslash\{0\}$, $w\in\R$ and $f\in L^2(\R)$. Denote $\epsilon=\nu/k$. Let $h, \varphi\in H^2(\R)$ be the solution to the system of equations for $v\in\R$,
\begin{equation}\label{BSDG3}
\begin{split}
(\nu/k)(B^\ast(v+w))^2\partial_v^2h(v)+(\nu/k) B^\ast(v+w)\partial_vB^\ast(v+w)&\partial_vh(v)-iv\varphi(v)\\
\qquad+iB^\ast(v+w)\partial_vB^\ast(v+w)\varphi(v)&=f(v+w),\\
\Big[-k^2+(B^\ast(v+w))^2\partial_v^2+B^\ast(v+w)\partial_vB^\ast(v+w)\partial_v\Big]\varphi(v)&=h(v).
\end{split}
\end{equation}
Define 
\begin{equation}\label{BSDG4}
F(v):=\int_{\R^2}\mathcal{G}_k(v,v';w)k_\epsilon(v',\rho;w)\frac{1}{(B^\ast(\rho+w))^2}\frac{f(\rho+w)}{\rho+i\epsilon^{1/3}} d\rho dv'.
\end{equation}

(i) Then we have the bound
\begin{equation}\label{BSDG5}
\|\varphi\|_{H^1_k(\R)}\lesssim \|F\|_{H^1_k(\R)}.
\end{equation}

(ii) Equivalently, we can reformulate the equation \eqref{BSDG3} in the integral form as 
\begin{equation}\label{BSDGL1}
\varphi(v)+i\int_{\R^2}\mathcal{G}_{k}(v,v';w)k_{\epsilon}(v',\rho;w)\frac{\partial_\rho B^\ast(\rho+w)}{B^\ast(\rho+w)} \frac{\varphi(\rho)}{\rho+i\epsilon^{1/3}} d\rho dv'=F(v),\quad {\rm for}\,\,v\in\R.
\end{equation}
Then we have the bound \eqref{BSDG5} for the solution $\varphi$.

\end{proposition}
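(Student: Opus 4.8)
The plan is to prove (ii) first, by inverting the two differential operators in \eqref{BSDG3} against their Green's functions, and then to deduce (i) from the limiting absorption principle, Proposition \ref{LAP22} (equivalently Remark \ref{LAP30.01}), after undoing the change of variables $v+w=b(y)$, $w=b(y_0)$. From the second equation of \eqref{BSDG3} and the definition \eqref{BSDG4.001} of the elliptic Green's function we have $\varphi(v)=\int_\R\mathcal{G}_k(v,v';w)h(v')\,dv'$. Multiplying the first equation of \eqref{BSDG3} by $(B^\ast(v+w))^{-2}$ and recalling $\epsilon=\nu/k$, the operator acting on $h$ becomes $\epsilon\partial_v^2+\epsilon\frac{\partial_vB^\ast(v+w)}{B^\ast(v+w)}\partial_v-i\frac{v}{(B^\ast(v+w))^2}$, which by \eqref{GKA6} is inverted by the kernel $\frac{k_\epsilon(v,\rho;w)}{(\rho+i\epsilon^{1/3})(B^\ast(\rho+w))^2}$. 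Hence
\[
h(v)=\int_\R\frac{k_\epsilon(v,\rho;w)}{(\rho+i\epsilon^{1/3})(B^\ast(\rho+w))^2}\Big[f(\rho+w)-iB^\ast(\rho+w)\partial_\rho B^\ast(\rho+w)\varphi(\rho)\Big]\,d\rho,
\]
and substituting into $\varphi=\mathcal{G}_k h$ and using Fubini produces exactly the integral identity \eqref{BSDGL1} with $F$ as in \eqref{BSDG4}. The interchange of integrations, and later of $\partial_v$ with the integral, is legitimate by absolute convergence: $|\rho+i\epsilon^{1/3}|\ge|\epsilon|^{1/3}>0$ removes any singularity, $k_\epsilon$ and $\partial_v k_\epsilon$ have the Gaussian-type decay of Lemma \ref{GKA7}, and $\mathcal{G}_k$ and $\partial_v\mathcal{G}_k$ decay exponentially in $|k||v-v'|$ by Lemma \ref{pre200}, so all iterated integrals are dominated via Lemma \ref{pre300}. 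The same estimates give $F\in H^1_k(\R)$ (the Airy inversion is bounded on $L^2$ by \eqref{LAPM2.1}, and $\mathcal{G}_k$ maps $L^2$ boundedly into $H^1_k$), so that \eqref{BSDG5} is meaningful; parts (i) and (ii) are then equivalent.

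To get the bound \eqref{BSDG5}, I would pass to the variable $y$ through $v+w=b(y)$, $v'+w=b(y')$, $\rho+w=b(z)$, $w=b(y_0)$. Since $b'\in[\sigma_0,1/\sigma_0]$, this is a bi-Lipschitz bijection of $\R$, under which the $H^1_k(\R)$ norm is comparable in the two coordinate systems, uniformly in $k$. Under it, the operator $(B^\ast(v+w))^2\partial_v^2+B^\ast(v+w)\partial_vB^\ast(v+w)\partial_v$ conjugates to $\partial_y^2$, so $\mathcal{G}_k(v,v';w)$ becomes a Jacobian multiple of the Green's function of $\partial_y^2-k^2$ (cf.\ \eqref{BSDG4.002}); $-iv$ becomes $i(b(y_0)-b(y))$; and by \eqref{GKA9.1}--\eqref{GKA9.3} the kernel $k_\epsilon(v',\rho;w)/(\rho+i\epsilon^{1/3})$ becomes a Jacobian multiple of the Airy kernel $k^\ast_{\epsilon,0}(y',z;y_0)$ of \eqref{GKA1.1}; finally the factor $\partial_\rho B^\ast(\rho+w)/B^\ast(\rho+w)$, the Jacobian $d\rho=b'(z)\,dz$, and the $B^\ast(\rho+w)$ in \eqref{GKA9.3} combine to give exactly the coefficient $b''(z)$. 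Thus \eqref{BSDGL1} transforms into $\psi+T_{\epsilon,0,y_0}(ib''\psi)=F^\flat$, where $\psi(y):=\varphi(b(y)-w)$, $F^\flat(y):=F(b(y)-w)$, and $T_{\epsilon,0,y_0}$ is the operator \eqref{LAP21}. Proposition \ref{LAP22} with $\alpha=0$ yields $\kappa\|\psi\|_{H^1_k(\R)}\le\|F^\flat\|_{H^1_k(\R)}$, and translating back,
\[
\|\varphi\|_{H^1_k(\R)}\lesssim_{\sigma_0}\|\psi\|_{H^1_k(\R)}\le\kappa^{-1}\|F^\flat\|_{H^1_k(\R)}\lesssim_{\sigma_0}\|F\|_{H^1_k(\R)},
\]
which is \eqref{BSDG5}.

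I expect the genuine difficulty to be bookkeeping rather than conceptual. One must check the precise change-of-variables dictionary relating $k_\epsilon$, $\mathcal{G}_k$ and the Jacobian factors $B^\ast$, $\partial_vB^\ast$ to the $y$-variable Airy and elliptic Green's functions --- carefully tracking the powers of $B^\ast$ and the $\epsilon^{1/3}$ normalization --- so that the nonlocal term lands exactly on $T_{\epsilon,0,y_0}(ib''\,\cdot)$ with the right constant, and one must justify the Fubini interchanges and the differentiation under the integral sign using the pointwise and derivative bounds of Lemmas \ref{pre200}, \ref{GKA7} and the integral inequality of Lemma \ref{pre300}. A small but essential point is that the $i\epsilon^{1/3}$ in \eqref{GKA6} is only part of the normalization of the Airy Green's function and introduces no spectral shift, so that one is truly in the $\alpha=0$ case of Proposition \ref{LAP22} / Remark \ref{LAP30.01}.
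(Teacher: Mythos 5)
Your proposal is correct and follows essentially the same route as the paper, whose proof is a one-line appeal to Proposition \ref{LAP22} (via Remark \ref{LAP30.01}) combined with the change of variables \eqref{int10}; your write-up simply fills in the Green's-function inversion for part (ii) and the coordinate dictionary, both of which check out (note you silently and correctly read the term $-iv\varphi(v)$ in \eqref{BSDG3} as $-ivh(v)$, consistent with \eqref{BSDG1} and with the integral form \eqref{BSDGL1}).
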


\begin{proof}
The bound \eqref{BSDG5} follows from Proposition \ref{LAP22} (see also remark \ref{LAP30.01}), and the change of variables \eqref{int10}.

\end{proof}

To use Proposition \ref{BSDG2} to obtain bounds on $\Theta_{k,\nu}$, we need the following lemmas. 

\begin{lemma}\label{BSDG6}
There exists $\nu_\ast\in(0,1)$ sufficiently small, such that the following statement holds. Assume that $\nu\in(0,\nu_\ast)$, $k\in\Z\backslash\{0\}$, $w\in\R$ and $f\in L^2(\R)$. Denote $\epsilon=\nu/k$. Define for $v,w\in\R$,
\begin{equation}\label{BSDG7}
F(v,w):=\int_{\R^2}\mathcal{G}_k(v,v';w)k_\epsilon(v',\rho;w)\frac{1}{(B^\ast(\rho+w))^2}\frac{f(\rho+w)}{\rho+i\epsilon^{1/3}} d\rho dv'.
\end{equation}
Then we have the bound
\begin{equation}\label{BSDG8}
\|\langle k,\partial_v\rangle F\|_{L^2(\R^2)}\lesssim \|f\|_{L^2(\R)}.
\end{equation}

\end{lemma}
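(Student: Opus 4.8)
The plan is to realise $f\mapsto\langle k,\partial_v\rangle F$ as an integral operator from $L^2(\R)$ to $L^2(\R^2)$ and to bound its norm directly. Write $g(v',w):=\int_\R k_\epsilon(v',\rho;w)\frac{f(\rho+w)}{(B^\ast(\rho+w))^2(\rho+i\epsilon^{1/3})}\,d\rho$; then, with $\mathcal{G}_k$ as in \eqref{BSDG4.001}, the function $F(\cdot,w)$ from \eqref{BSDG7} is the solution of $\big[-k^2+(B^\ast(\cdot+w))^2\partial_v^2+B^\ast(\cdot+w)\partial_vB^\ast(\cdot+w)\partial_v\big]F(\cdot,w)=g(\cdot,w)$, i.e. $F(v,w)=\int_\R\mathcal{G}_k(v,v';w)g(v',w)\,dv'$. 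Since $\langle k,\xi\rangle\lesssim|k|+|\xi|$ for $k\neq0$ and the multiplier acts only through $\mathcal{G}_k$, it suffices to bound, for each $\mathcal{G}_k^\sharp\in\{\,|k|\mathcal{G}_k,\ \partial_v\mathcal{G}_k\,\}$, the operator $Tf(v,w):=\int_\R\mathcal{K}_\epsilon(v,w;\sigma)\,f(\sigma)\,d\sigma$ obtained after the substitution $\sigma=\rho+w$, where
\[
\mathcal{K}_\epsilon(v,w;\sigma):=\frac{\mathcal{R}_\epsilon(v,\sigma-w;w)}{(B^\ast(\sigma))^2\,(\sigma-w+i\epsilon^{1/3})},\qquad
\mathcal{R}_\epsilon(v,\rho;w):=\int_\R\mathcal{G}_k^\sharp(v,v';w)\,k_\epsilon(v',\rho;w)\,dv'.
\]

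The first step is the $\epsilon$-uniform pointwise bound $|\mathcal{R}_\epsilon(v,\rho;w)|\lesssim e^{-\delta_0|v-\rho|}$. This follows from the Green's function bounds of Lemma \ref{pre200}, the kernel bound \eqref{GKA8}, and Lemma \ref{pre300}, once one notes the crucial cancellation: thanks to the prefactor $(\rho+i\epsilon^{1/3})$ in the equation \eqref{GKA6} defining $k_\epsilon$, the $v'$-mass of $k_\epsilon$ is bounded uniformly, $\int_\R|k_\epsilon(v',\rho;w)|\,dv'\lesssim\epsilon^{-1/3}\langle\epsilon^{-1/3}\rho\rangle^{1/2}\cdot\epsilon^{1/3}\langle\epsilon^{-1/3}\rho\rangle^{-1/2}\lesssim1$. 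I emphasize that the naive $L^2(\R^2)$ estimate for $g$ itself loses a power $\epsilon^{-1/6}$; so no estimate that keeps absolute values throughout can succeed, and genuine cancellation must be used in the remaining steps.

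Next I would use that $\frac{1}{\sigma-w+i\epsilon^{1/3}}$ is a mollified Cauchy kernel in the pair $(\sigma,w)$: the corresponding convolution operator in $w$ is bounded on $L^2$ with operator norm uniform in $\epsilon$, its Fourier multiplier being bounded by an absolute constant. To extract this, decompose $\mathcal{R}_\epsilon(v,\sigma-w;w)=\mathcal{R}_\epsilon(v,0;\sigma)+\big(\mathcal{R}_\epsilon(v,\sigma-w;w)-\mathcal{R}_\epsilon(v,0;\sigma)\big)$ and correspondingly $T=T^{\rm pr}+T^{\rm er}$. In $T^{\rm pr}$ the numerator no longer depends on $w$, so for each fixed $v$ it is the mollified Cauchy operator composed with the bounded multiplication operator $\mathcal{R}_\epsilon(v,0;\cdot)/(B^\ast)^2$; since $|\mathcal{R}_\epsilon(v,0;\sigma)|\lesssim e^{-\delta_0|v|}$, taking the $L^2_w$ norm and then integrating $dv$ yields $\|T^{\rm pr}\|_{L^2(\R)\to L^2(\R^2)}\lesssim1$. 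For $T^{\rm er}$ one uses that $\mathcal{R}_\epsilon(v,\sigma-w;w)-\mathcal{R}_\epsilon(v,0;\sigma)$ vanishes as $\sigma-w\to0$, with a modulus of continuity that is Lipschitz of size $O(1)$ away from the critical layer and degenerates only to scale $\epsilon^{1/3}$ when $|\sigma-w|\lesssim\epsilon^{1/3}$; this gives $|\mathcal{K}_\epsilon^{\rm er}(v,w;\sigma)|\lesssim\frac{\min\{|\sigma-w|,1\}}{\epsilon^{1/3}+|\sigma-w|}\big(e^{-\delta_0|v-\sigma+w|}+e^{-\delta_0|v|}\big)$, which is dominated by a kernel to which Schur's test applies (the non-Lipschitz region $|\sigma-w|\lesssim\epsilon^{1/3}$ contributing only $O(1)$ because of its small measure). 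Summing over the two choices of $\mathcal{G}_k^\sharp$ then gives \eqref{BSDG8}.

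The technical heart of the argument --- and the step I expect to be the main obstacle --- is the modulus-of-continuity estimate for $\mathcal{R}_\epsilon$ in $\rho$ and $w$ with the right $\epsilon$-uniform constants near the critical layer $\rho=0$; this amounts to controlling $\partial_\rho k_\epsilon$ and the $w$-dependence of $k_\epsilon$ there. It is precisely here that the sharp description of $k_\epsilon$ from Proposition \ref{GKA11} is used: the pointwise bounds \eqref{GKA13}--\eqref{GKA14} when $|\rho|\gtrsim1$ (a regime where $\frac{1}{\rho+i\epsilon^{1/3}}$ is already a bounded smooth factor and a Schur-test argument closes with room to spare), and, when $|\rho|\lesssim1$, the decomposition $\Psi(\rho)k_\epsilon=k_\epsilon^1+k_\epsilon^2$ of \eqref{GKA14.1} --- handling the regular part $k_\epsilon^1$ via its $\epsilon$-uniform pointwise bounds \eqref{GKA14.2}, and the genuinely singular part $k_\epsilon^2$ via the Fourier-side bound \eqref{GKA14.3} together with the Green's function estimates of Lemma \ref{pre200}. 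Reconciling the Cauchy-kernel cancellation with the critical-layer singularity of $k_\epsilon^2$ is where the main care is required.
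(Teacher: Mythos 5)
Your setup is sound: the reduction to the kernel $\mathcal{R}_\epsilon(v,\rho;w)=\int\mathcal{G}_k^\sharp(v,v';w)k_\epsilon(v',\rho;w)\,dv'$, the uniform bound $|\mathcal{R}_\epsilon|\lesssim e^{-\delta_0|v-\rho|}$ via \eqref{BSDG4.006}, \eqref{GKA8} and Lemma \ref{pre300}, and the observation that the regions $|\rho|\gtrsim1$ and $|v'-\rho|\gtrsim1$ close by Schur-type estimates are all correct and coincide with how the paper treats $F_N$, $F_L$ and $F_1$. The gap is in your treatment of the singular region, specifically in the claimed modulus of continuity $|\mathcal{R}_\epsilon(v,\sigma-w;w)-\mathcal{R}_\epsilon(v,0;\sigma)|\lesssim\min\{|\sigma-w|,1\}$ underlying your bound on $\mathcal{K}^{\rm er}_\epsilon$. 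This is false uniformly in $\epsilon$: writing $\rho=\sigma-w$, the function $\rho\mapsto\mathcal{R}_\epsilon(v,\rho;w)$ has $\rho$-derivative of size $\epsilon^{-1/3}$ throughout the critical layer $|\rho|\lesssim\epsilon^{1/3}$ (this is forced by the two-sided diagonal bound \eqref{BKG8}, which shows $k_\epsilon(\cdot,\rho;w)$ carries genuinely $O(1)$ mass concentrated at scale $\epsilon^{1/3}$, with no cancellation in $\partial_\rho$), so it undergoes an $O(1)$ change across the layer. This is precisely the $\delta$-function contribution in the limiting absorption principle \eqref{LAPM4.1}: for $\epsilon^{1/3}\ll|\rho|\ll1$ the kernel $\mathcal{R}_\epsilon(v,\rho;w)$ is close to the outer limit $i(B^\ast(\rho+w))^2\mathcal{G}_k^\sharp(v,\rho;w)$, whose value near $\rho=0$ differs from $\mathcal{R}_\epsilon(v,0;\sigma)$ by an $\epsilon$-independent $O(1)$ amount. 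Consequently $|\mathcal{K}^{\rm er}_\epsilon|\sim\frac{O(1)}{|\sigma-w|}$ on the whole range $\epsilon^{1/3}\le|\sigma-w|\le1$, and your Schur integral diverges like $\log\epsilon^{-1}$; since you keep absolute values on $T^{\rm er}$, no cancellation is left to remove this loss. (A secondary point: the bounds on $\partial_\rho k_\epsilon$ that your modulus-of-continuity estimate would require are not among the conclusions of Proposition \ref{GKA11} and would need to be proved separately.)

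The argument could in principle be repaired — e.g.\ by subtracting the outer limit $i(B^\ast)^2\mathcal{G}_k^\sharp(v,\sigma-w;w)$ for $|\sigma-w|\ge\epsilon^{1/3}$ (so that the $O(1)$ discrepancy multiplies a truncated Hilbert kernel in $w$, bounded on $L^2$ without absolute values) and treating $|\sigma-w|\le\epsilon^{1/3}$ by brute force — but this requires new quantitative estimates comparing $\mathcal{R}_\epsilon$ to its outer limit with a rate, which is exactly the analysis you deferred. Note that the paper sidesteps all of this: after localizing, it handles the genuinely singular piece $k_\epsilon^2$ entirely on the Fourier side, using the explicit oscillatory formula \eqref{GKA33}, the $L^\infty_\alpha L^1_\beta$ bound \eqref{GKA14.3}, and the fact that $\frac{\Psi^\ast(\rho)}{\rho+i\epsilon^{1/3}}$ has a uniformly bounded Fourier transform in $\rho$ — the Cauchy-kernel cancellation you correctly identify as essential is exploited there, in frequency variables, rather than through a physical-space principal-part/Hölder decomposition, and this is what avoids the critical-layer jump that breaks your error estimate.
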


\begin{proof}
We choose a Gevrey cutoff function $\Psi^\ast\in C_0^\infty((-6,6))$ such that 
\begin{equation}\label{BSDG13}
\Psi^\ast\equiv 1\,\,{\rm on }\,\,[-5,5], \quad \sup_{\xi\in\R}\Big[e^{\langle\xi\rangle^{3/4}}\widehat{\,\Psi^\ast}(\xi)\Big]\lesssim1.
\end{equation}
and also a Gevrey cutoff function $\Psi^{\ast\ast}\in C_0^\infty((-20,20))$ such that 
\begin{equation}\label{BSDG13}
\Psi^{\ast\ast}\equiv 1\,\,{\rm on }\,\,[-15,15], \quad \sup_{\xi\in\R}\Big[e^{\langle\xi\rangle^{3/4}}\widehat{\,\,\Psi^{\ast\ast}}(\xi)\Big]\lesssim1.
\end{equation}
We can decompose, using a Gevrey cutoff function $\Psi$ satisfying \eqref{GKA10} and the definitions for $k^1_\epsilon, k^2_\epsilon$ from Proposition \ref{GKA11}, for $v,w\in\R$,
\begin{equation}\label{BSDG9}
\begin{split}
F(v,w)=F_{N}(v,w)+F_{L}(v,w)+F_1(v,w)+F_2(v,w),
\end{split}
\end{equation}
where
\begin{equation}\label{BSDG10}
F_{N}(v,w):=\int_{\R^2}\mathcal{G}_k(v,v';w)(1-\Psi(\rho))k_\epsilon(v',\rho;w)\frac{1}{(B^\ast(\rho+w))^2}\frac{f(\rho+w)}{\rho+i\epsilon^{1/3}} d\rho dv',
\end{equation}
\begin{equation}\label{BSDG10.5}
F_{L}(v,w):=\int_{\R^2}\mathcal{G}_k(v,v';w)(1-\Psi^{\ast}(v'))\Psi(\rho)k_\epsilon(v',\rho;w)\frac{1}{(B^\ast(\rho+w))^2}\frac{f(\rho+w)}{\rho+i\epsilon^{1/3}} d\rho dv',
\end{equation}
\begin{equation}\label{BSDG11}
F_{1}(v,w):=\int_{\R^2}\mathcal{G}_k(v,v';w)k^1_\epsilon(v',\rho;w)\frac{\Psi^{\ast}(v')}{(B^\ast(\rho+w))^2}\frac{\Psi^\ast(\rho)f(\rho+w)}{\rho+i\epsilon^{1/3}} d\rho dv',
\end{equation}
and
\begin{equation}\label{BSDG12}
F_{2}(v,w):=\int_{\R^2}\mathcal{G}_k(v,v';w)k^2_\epsilon(v',\rho;w)\frac{\Psi^{\ast}(v')}{(B^\ast(\rho+w))^2}\frac{\Psi^\ast(\rho)f(\rho+w)}{\rho+i\epsilon^{1/3}} d\rho dv'.
\end{equation}
It suffices to prove that for $\ast\in\{N,L,1,2\}$,
\begin{equation}\label{BSDG14}
\|\langle k,\partial_v\rangle F_\ast\|_{L^2(\R^2)}\lesssim \|f\|_{L^2(\R)}.
\end{equation}
We first prove \eqref{BSDG14} for $F_{N}$. Using the bound \eqref{GKA8}, \eqref{BSDG4.006} and \eqref{BSDG4.007}, we have
\begin{equation}\label{BSDG15}
\begin{split}
|k||F_{N}(v,w)|+|\partial_vF_{N}(v,w)|\lesssim \int_\R e^{-\delta_0|v-\rho|}\langle\rho\rangle^{-1}|f(\rho+w)|\,d\rho,
\end{split}
\end{equation}
from which the desired bounds \eqref{BSDG14} for $F_{N}$ follows. 

To prove \eqref{BSDG14} for $F_L$, we first note the pointwise bounds for $v',\rho,w\in\R$ with $|v'-\rho|>1$,
\begin{equation}\label{BSDG15.001}
\left|k_\epsilon(v',\rho;w)\frac{1}{\rho+i\epsilon^{1/3}}\right|\lesssim \epsilon^{-1/3}\langle\epsilon^{-1/3}v',\epsilon^{-1/3}\rho\rangle^{1/2}e^{-\delta_0\langle\epsilon^{-1/3}v',\epsilon^{-1/3}\rho\rangle^{1/2}\epsilon^{-1/3}|v'-\rho|}.
\end{equation}
Then using the bound \eqref{GKA8} and \eqref{BSDG4.006}-\eqref{BSDG4.007} we have
\begin{equation}\label{BSDG15}
\begin{split}
|k||F_{L}(v,w)|+|\partial_vF_{L}(v,w)|\lesssim \int_\R e^{-\delta_0|v-\rho|}\langle\rho\rangle^{-1}|f(\rho+w)|\,d\rho,
\end{split}
\end{equation}
from which the desired bounds \eqref{BSDG14} for $F_{L}$ follows. 

To prove \eqref{BSDG14} for $F_1$, using the inequality \eqref{GKA14.2}, we can bound 
\begin{equation}\label{BSDG16}
\begin{split}
&|k||F_{1}(v,w)|+|\partial_vF_{1}(v,w)|\\
&\lesssim  \int_{\R^2} e^{-\delta_0|v-v'|}\epsilon^{-1/3}\langle \epsilon^{-1/3}\rho\rangle^{1/2}e^{-\delta_0\langle\epsilon^{-1/3}v',\epsilon^{-1/3}\rho\rangle^{1/2}\epsilon^{-1/3}|v'-\rho|}\Psi^\ast(\rho)|f(\rho+w)|\,dv' d\rho\\
&\lesssim \int_\R e^{-\delta_0|v-\rho|}\langle\rho\rangle^{-1}|f(\rho+w)|\,d\rho,
\end{split}
\end{equation}
from which the desired bounds \eqref{BSDG14} for $F_{1}$ follows. 

We now turn to the proof of \eqref{BSDG14} for $F_2$, which is the most difficult case due to the singular factor $(\rho+i\epsilon^{1/3})^{-1}$ in \eqref{BSDG12}. Define for $j\in\Z$, $\rho,v,w\in\R$,
\begin{equation}\label{BSDG16.1}
F_{2j}(v,w):=F_{2}(v,w)\Psi(w-j),\quad f_j(\rho):=f(\rho)\Psi^{\ast\ast}(\rho-j),\quad k^2_{\epsilon,j}(v',\rho;w):=k^2_{\epsilon}(v',\rho;w)\Psi^{\ast}(w-j).
\end{equation}
Taking Fourier transform in $v, w$, we can bound for $j\in\Z, \xi,\eta\in\R$,
\begin{equation}\label{BSDG17}
\begin{split}
&|\widehat{\,\,F_{2j}}(\xi,\eta)|\\
&\lesssim  \int_{\R^5}\big|\widetilde{\,\,\mathcal{G}_{k,j}}(\xi,\alpha_1;\alpha_2)\big|\big|\widetilde{\,\,k^2_{\epsilon,j}}(-\alpha_1,\eta-\alpha_2-\alpha_3+\gamma;\alpha_3)\big|e^{-\delta_0\langle\beta\rangle^{1/2}}\big|\widehat{\,f_j}(\eta-\beta-\alpha_2-\alpha_3)\big|\,d\Xi\\
&\lesssim \int_{\R^5}\frac{e^{-\delta_0\langle\xi+\alpha_1,\alpha_2,\alpha_3,\beta\rangle^{1/2}}}{k^2+\xi^2}\Big|e^{\delta_0\langle\alpha_3\rangle^{1/2}}\widetilde{\,\,k^2_{\epsilon,j}}(-\alpha_1,\eta-\alpha_2-\alpha_3+\gamma;\alpha_3)\Big|\big|\widehat{\,f_j}(\eta-\beta-\alpha_2-\alpha_3)\big|\,d\Xi.
\end{split}
\end{equation}
In the above, we have used the notation $d\Xi=d\alpha_1d\alpha_2d\alpha_3d\beta d\gamma$ and the bounds \eqref{BSDG4.005}. The desired bounds \eqref{BSDG14} for $F_2$ now follow from \eqref{BSDG17} and the bounds \eqref{GKA14.3}, together with the inequality
\begin{equation}\label{BSDG18}
\sum_{j\in\Z}\|f_j\|_{L^2(\R)}^2\lesssim \|f\|_{L^2(\R)}^2.
\end{equation}
\end{proof}

\begin{lemma}\label{BSDG19}
There exists $\nu_\ast\in(0,1)$ sufficiently small, such that the following statement holds. Assume that $\nu\in(0,\nu_\ast)$, $k\in\Z\backslash\{0\}$ and $w\in\R$. Denote $\epsilon=\nu/k$. Suppose that $f\in L^2(\R)$ satisfies for some $\mu\in(0,1)$ with $\mu\ll \delta_0$,
\begin{equation}\label{BSDG19.1}
\big\|e^{\mu\langle k,\xi\rangle^{1/2}}\widehat{f\,}(\xi)\big\|_{L^2(\R)}<\infty.
\end{equation}
 Define for $v,w\in\R$,
\begin{equation}\label{BSDG20}
F(v,w):=\int_{\R^2}\mathcal{G}_k(v,v';w)k_\epsilon(v',\rho;w)\frac{1}{(B^\ast(\rho+w))^2}\frac{f(\rho+w)}{\rho+i\epsilon^{1/3}} d\rho dv'.
\end{equation}
Then we have bound
\begin{equation}\label{BSDG21}
\big\|\langle k,\xi\rangle e^{\mu\langle k,\eta\rangle^{1/2}} \widetilde{\,F\,}(\xi,\eta)\big\|_{L^2(\R^2)}\lesssim \big\|e^{\mu\langle k,\xi\rangle^{1/2}}\widehat{\,f\,\,}(\xi)\big\|_{L^2(\R)}.
\end{equation}

\end{lemma}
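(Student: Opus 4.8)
The plan is to rerun the proof of Lemma~\ref{BSDG6}, now carrying the Gevrey weight $e^{\mu\langle k,\eta\rangle^{1/2}}$ in the $w$-frequency $\eta$ through the entire argument; the factor $\langle k,\xi\rangle$ in the $v$-frequency is handled exactly as there, so the only genuinely new point is the propagation of $w$-regularity. Decompose $F=F_N+F_L+F_1+F_2$ exactly as in \eqref{BSDG9}--\eqref{BSDG12}, using the same Gevrey cutoffs and the splitting $\Psi(\rho)k_\epsilon=k^1_\epsilon+k^2_\epsilon$ from Proposition~\ref{GKA11}(ii); it then suffices to prove, for each $\ast\in\{N,L,1,2\}$,
\begin{equation*}
\big\|\langle k,\xi\rangle\, e^{\mu\langle k,\eta\rangle^{1/2}}\widetilde{\,F_\ast\,}(\xi,\eta)\big\|_{L^2(\R^2)}\lesssim \big\|e^{\mu\langle k,\xi\rangle^{1/2}}\widehat{\,f\,}(\xi)\big\|_{L^2(\R)}.
\end{equation*}
The key mechanism is subadditivity of the Gevrey symbol: whenever $\eta=\zeta+\sum_\ell\eta_\ell$ we have $\langle k,\eta\rangle^{1/2}\le\langle k,\zeta\rangle^{1/2}+\sum_\ell\langle\eta_\ell\rangle^{1/2}$, hence $e^{\mu\langle k,\eta\rangle^{1/2}}\le e^{\mu\langle k,\zeta\rangle^{1/2}}\prod_\ell e^{\mu\langle\eta_\ell\rangle^{1/2}}$. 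In every $F_\ast$ the datum $f$ enters as $f(\rho+w)$, whose $w$-frequency equals its own frequency $\zeta$; all the remaining $w$-dependence sits in the Green's function $\mathcal{G}_k(v,v';w)$ (Lemma~\ref{pre200}), in the rescaled Airy kernels $k^j_\epsilon(v',\rho;w)$ (Proposition~\ref{GKA11}), and in the coefficients built from $B^\ast(\cdot+w)$, all of which are Gevrey-$\delta_0$ regular in $w$. Since $\mu\ll\delta_0$, multiplying these factors by $e^{\mu\langle\eta_\ell\rangle^{1/2}}$ is harmless: it is absorbed by a fraction of the ambient $e^{-\delta_0\langle\eta_\ell\rangle^{1/2}}$ decay, still leaving, say, $e^{-\tfrac12\delta_0\langle\eta_\ell\rangle^{1/2}}$, which preserves all the integrability and $j$-summability used in Lemma~\ref{BSDG6}.

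Concretely, localize $w$ (and, where needed, $\rho$ and $f$) into unit-scale pieces by Gevrey bumps as is done for $F_2$ in \eqref{BSDG16.1}, take the Fourier transform in $w$, split $e^{\mu\langle k,\eta\rangle^{1/2}}$ by the inequality above with $\zeta$ the frequency of $f$ and the $\eta_\ell$ the $w$-frequencies of the kernel factors, move $e^{\mu\langle k,\zeta\rangle^{1/2}}$ onto the transform of the (localized) $f$, and absorb the rest into the $\delta_0$-decay of the transformed kernels and coefficients (Lemma~\ref{pre200}, Proposition~\ref{GKA11}, and the standing assumption on $B^\ast$). For $F_N$, $F_L$, $F_1$ this returns exactly the scalar convolution inequalities \eqref{BSDG15}--\eqref{BSDG16} together with Lemma~\ref{pre300}, but with $f$ replaced by the function with Fourier transform $e^{\mu\langle k,\zeta\rangle^{1/2}}\widehat f(\zeta)$ (which is in $L^2$ by hypothesis); reassembling the localized pieces in $\ell^2$ closes the bound, using the Gevrey analogue of \eqref{BSDG18}, namely $\sum_{j\in\Z}\big\|e^{\mu\langle k,\cdot\rangle^{1/2}}\widehat{f\,\Psi^{\ast\ast}(\cdot-j)}\big\|_{L^2(\R)}^2\lesssim\big\|e^{\mu\langle k,\cdot\rangle^{1/2}}\widehat f\big\|_{L^2(\R)}^2$, which holds because multiplication by a Gevrey-$3/4$ bump essentially commutes with the Gevrey-$1/2$ multiplier (the Gevrey algebra estimates in the appendices of \cite{IOJI,JiaG}).

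The one genuinely delicate piece is $F_2$, for exactly the reason it was delicate in Lemma~\ref{BSDG6}: the singular factor $(\rho+i\epsilon^{1/3})^{-1}$ forces the use of the decomposition in Proposition~\ref{GKA11}(ii) together with the $w$- and $\rho$-localizations, leading to the Fourier-side estimate \eqref{BSDG17}. Carrying the extra weight, one writes $\eta=(\eta-\beta-\alpha_2-\alpha_3)+\beta+\alpha_2+\alpha_3$, sends $e^{\mu\langle k,\eta-\beta-\alpha_2-\alpha_3\rangle^{1/2}}$ onto $\widehat{\,f_j}$, and absorbs $e^{\mu\langle\beta\rangle^{1/2}}$, $e^{\mu\langle\alpha_2\rangle^{1/2}}$, $e^{\mu\langle\alpha_3\rangle^{1/2}}$ into the factors $e^{-\delta_0\langle\beta\rangle^{1/2}}$ and $e^{-\delta_0\langle\xi+\alpha_1,\alpha_2,\alpha_3,\beta\rangle^{1/2}}/(k^2+\xi^2)$ appearing in \eqref{BSDG17} — so that the bound \eqref{GKA14.3} on $e^{\delta_0\langle\alpha_3\rangle^{1/2}}\widetilde{\,k^2_{\epsilon,j}}$ is used unchanged — while retaining enough of that decay to control the $\alpha_1,\alpha_2,\alpha_3,\beta,\gamma$ integration; one then sums over $j$ via \eqref{BSDG18} exactly as above. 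The main obstacle is precisely this bookkeeping in $F_2$: one must simultaneously manage the singular $\rho$-factor, the two localizations, and the distribution of the Gevrey weight, all while verifying that none of the Gevrey cutoffs eats into the $\delta_0$-decay on which the absorption relies; the pieces $F_N$, $F_L$, $F_1$ are comparatively routine once the weight has been distributed.
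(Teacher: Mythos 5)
Your proposal is correct and follows essentially the same route as the paper: the same decomposition $F=F_N+F_L+F_1+F_2$, the same subadditivity splitting of $e^{\mu\langle k,\eta\rangle^{1/2}}$ with the weight transferred onto $\widehat{f}$ and the remainder absorbed into the $e^{-\delta_0\langle\cdot\rangle^{1/2}}$ decay of the kernels (using $\mu\ll\delta_0$), and the same Fourier-side treatment of $F_2$ via the localizations and \eqref{GKA14.3}, summed over $j$ in $\ell^2$. The paper implements exactly this by defining $\widehat{g_j}(\eta)=e^{\mu\langle k,\eta\rangle^{1/2}}\widehat{f_j}(\eta)$ in \eqref{BSDG27.5}, which is your "Gevrey analogue of \eqref{BSDG18}".
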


\begin{proof}
We decompose $F$ as in \eqref{BSDG9}-\eqref{BSDG12}, and define for $j\in\Z$, $\ast\in\{N,L, 1,2\}$ and $v, w\in\R$,
\begin{equation}\label{BSDG22}
F_{\ast}^j(v,w):=F_\ast(v,w)\Psi(w-j), \quad f_j(\rho):=\frac{f(\rho)}{(B^\ast(\rho+w))^2}\Psi^{\ast\ast}(\rho-j).
\end{equation}
It suffices to prove the inequality for $\ast\in\{N,L,1,2\}$,
\begin{equation}\label{BSDG23}
\big\|\langle k,\xi\rangle e^{\mu\langle k,\eta\rangle^{1/2}} \widetilde{\,\,F_\ast}(\xi,\eta)\big\|_{L^2(\R^2)}\lesssim \big\|e^{\mu\langle k,\xi\rangle^{1/2}}\widehat{\,f\,}(\xi)\big\|_{L^2(\R)}.
\end{equation}

We first establish \eqref{BSDG23} for $\ast=N$. For $j\in\Z$, it follows from \eqref{BSDG4.005} and the bound \eqref{GKA13} that
\begin{equation}\label{BSDG24}
\begin{split}
&e^{\mu\langle k,\eta\rangle^{1/2}}\bigg[|k|\Big|\widehat{\,\,F_{N}^j}(v,\eta)\Big|+\Big|\partial_v\widehat{\,\,F_{N}^j}(v,\eta)\Big|\bigg]\\
&\lesssim \sum_{\ell\in\Z}\int_{\R^3}e^{-\delta_0|v-\rho|}\Psi(\rho-\ell)\langle\rho\rangle^{-1}e^{\mu\langle k,\eta-\alpha-\beta\rangle^{1/2}}\big|\widehat{\,\,f_{j+\ell}}(\eta-\alpha-\beta)\big|e^{-\delta_0\langle\alpha,\beta\rangle^{1/2}}\,d\alpha d\beta d\rho.
\end{split}
\end{equation}
The desired bound \eqref{BSDG23} for $F_{N}$ follows from \eqref{BSDG24}.

The bounds on $F_L$ follow similarly, using also the pointwise inequality \eqref{BSDG15.001}.

We now turn to the proof of \eqref{BSDG23} for $F_1$. Using \eqref{BSDG4.005} and the bound \eqref{GKA14.2} we obtain that for $j\in\Z$,
\begin{equation}\label{BSDG25}
\begin{split}
&e^{\mu\langle k,\eta\rangle^{1/2}}\bigg[|k|\Big|\widehat{\,F_{1}^j}(v,\eta)\Big|+\Big|\partial_v\widehat{\,F_{1}^j}(v,\eta)\Big|\bigg]\\
&\lesssim \int_{\R^3}e^{-|v-\rho|}e^{\mu\langle k,\eta-\alpha-\beta\rangle^{1/2}}\langle\rho\rangle^{-1}\big|\widehat{\,\,f_{j}}(\eta-\alpha-\beta)\big|e^{-\delta_0\langle\alpha,\beta\rangle^{1/2}}\,d\alpha d\beta  d\rho.
\end{split}
\end{equation}
The desired bound \eqref{BSDG23} for $F_1$ follows from \eqref{BSDG24}.

We finally prove \eqref{BSDG23} for $F_2$. Taking Fourier transform in $v, w$, using the definitions \eqref{BSDG12} and \eqref{BSDG22}, we can bound for $j\in\Z, \xi,\eta\in\R$,
\begin{equation}\label{BSDG27}
\begin{split}
&e^{\mu\langle k,\eta\rangle^{1/2}}|\widehat{\,F_{2}^j}(\xi,\eta)|\lesssim  \int_{\R^5}\big|\widetilde{\,\,\mathcal{G}_{k,j}}(\xi,\alpha_1;\alpha_2)\big|\big|\widetilde{\,\,k^2_{\epsilon,j}}(-\alpha_1,\eta-\alpha_2-\alpha_3+\gamma;\alpha_3)\big|e^{\mu\langle\beta,\alpha_2,\alpha_3\rangle^{1/2}}\\
&\hspace{1.5in}\times e^{-\delta_0\langle\beta\rangle^{1/2}}e^{\mu\langle k,\eta-\beta-\alpha_2-\alpha_3\rangle^{1/2}}\big|\widehat{\,f_j}(\eta-\beta-\alpha_2-\alpha_3)\big|\,d\Xi\\
&\lesssim \int_{\R^5}\frac{e^{-\delta_0\langle\xi+\alpha_1,\alpha_2,\alpha_3,\beta\rangle^{1/2}}}{k^2+\xi^2}\Big|e^{\delta_0\langle\alpha_3\rangle^{1/2}}\widetilde{\,\,k^2_{\epsilon,j}}(-\alpha_1,\eta-\alpha_2-\alpha_3+\gamma;\alpha_3)\Big|\big|\widehat{\,g_j}(\eta-\beta-\alpha_2-\alpha_3)\big|\,d\Xi.
\end{split}
\end{equation}
In the above, we have used the notation $d\Xi=d\alpha_1d\alpha_2d\alpha_3d\beta d\gamma$, the bounds \eqref{BSDG4.005} and the definiton
\begin{equation}\label{BSDG27.5}
\widehat{\,g_j}(\eta):=e^{\mu\langle k,\eta\rangle^{1/2}}\widehat{\,f_j}(\eta), \quad{\rm for}\,\,\eta\in\R.
\end{equation}

 The desired bounds \eqref{BSDG23} for $F_2$ now follow from \eqref{BSDG27} and the bounds \eqref{GKA14.3}.

\end{proof}

To control the commutator terms, we shall need the following estimates.
\begin{lemma}\label{BSDG28}
Fix $\mu\in(0,1/8)$ with $0<\mu\ll \delta_0$. Assume that $k\in\Z\backslash\{0\}$, $\epsilon\in(-1/8,1/8)\backslash\{0\}$. Define for any $h(v,w)\in L^2(\R^2)$,
\begin{equation}\label{BSDG29}
\mathcal{I}(h)(v,w):=\int_{\R^2}\mathcal{G}_{k}(v,v';w)k_{\epsilon}(v',\rho;w)\frac{\partial_\rho B^\ast(\rho+w)}{B^\ast(\rho+w)} \frac{h(\rho,w)}{\rho+i\epsilon^{1/3}} d\rho dv'
\end{equation}
Define also the Fourier multiplier operator $M_\mu$ as follows. For any $f\in L^2(\R)$,
\begin{equation}\label{BSDG30}
\widehat{M_\mu f}(\eta):=e^{\mu\langle k,\eta\rangle^{1/2}}\widehat{f}(\eta),\,\,{\rm for}\,\,\eta\in\R.
\end{equation}
For $h\in L^2(\R^2)$ with $\|\langle k,\partial_v\rangle M_\mu h\|_{L^2(\R^2)}<\infty$, assuming that $M_\mu$ acts on the $w$ variable here and below,  we have the following commutator bounds 
\begin{equation}\label{BSDG31}
\left\|\langle k,\partial_v\rangle\left[M_\mu\big\{\mathcal{I}(h)\big\}(v,w)-\mathcal{I}(M_\mu h)(v,w)\right]\right\|_{L^2(\R^2)}\lesssim \left\|\langle k,\partial_v\rangle\langle\partial_w\rangle^{-1/2} M_\mu\big\{\mathcal{I}(h)\big\}(v,w)\right\|_{L^2(\R^2)}.
\end{equation}
\end{lemma}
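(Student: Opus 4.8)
\emph{Plan of proof.} The plan is to move the commutator onto the $w$-frequency side and exploit the Gevrey regularity in $w$ of the kernel of $\mathcal I$, trading the action of $M_\mu$ on that kernel for a gain of half a $w$-derivative. Write $p(\eta):=e^{\mu\langle k,\eta\rangle^{1/2}}$ and
\[
G(v,v',\rho;w):=\mathcal G_k(v,v';w)\,k_\epsilon(v',\rho;w)\,\frac{\partial_\rho B^\ast(\rho+w)}{B^\ast(\rho+w)},
\]
so that $\mathcal I(h)(v,w)=\int_{\R^2}G(v,v',\rho;w)\,h(\rho,w)(\rho+i\epsilon^{1/3})^{-1}\,d\rho\,dv'$. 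Taking the Fourier transform in $w$ (dual variable $\eta$), the commutator $\mathcal C(h):=M_\mu\{\mathcal I(h)\}-\mathcal I(M_\mu h)$ has transform
\[
\widetilde{\mathcal C(h)}(v,\eta)=\int_{\R^2}\!\int_\R\big[p(\eta)-p(\eta')\big]\,\widetilde G(v,v',\rho;\eta-\eta')\,\widetilde h(\rho,\eta')\,\frac{d\eta'\,d\rho\,dv'}{\rho+i\epsilon^{1/3}},
\]
which is the same integral defining $\widetilde{M_\mu\{\mathcal I(h)\}}$ but with the symbol $p(\eta)$ replaced by $p(\eta)-p(\eta')$, so it is enough to compare these two symbols $\eta$-pointwise.

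First I would record the symbol estimate that drives everything. Since $p$ is even and $C^1$ with $\partial_\eta p(\eta)=m(\eta)p(\eta)$, $|m(\eta)|\lesssim\mu\langle\eta\rangle^{-1/2}$, and since $|\langle k,\eta\rangle^{1/2}-\langle k,\eta'\rangle^{1/2}|\le\langle\eta-\eta'\rangle^{1/2}$, a mean value argument (distinguishing $|\eta-\eta'|\le|\eta|/2$, where one integrates $|p'|$ along the segment, from $|\eta-\eta'|>|\eta|/2$, where $\langle\eta-\eta'\rangle\langle\eta\rangle^{-1/2}\gtrsim1$ and one simply uses $|p(\eta)-p(\eta')|\le p(\eta)+p(\eta')\lesssim e^{\mu\langle\eta-\eta'\rangle^{1/2}}p(\eta)$) gives, with constant independent of $\mu$,
\[
\big|p(\eta)-p(\eta')\big|\lesssim\langle\eta\rangle^{-1/2}\,\langle\eta-\eta'\rangle\,e^{\mu\langle\eta-\eta'\rangle^{1/2}}\,p(\eta).
\]
The factor $\langle\eta\rangle^{-1/2}p(\eta)$ is the symbol of $\langle\partial_w\rangle^{-1/2}M_\mu$ acting on the output frequency, which is precisely the shape of the right side of \eqref{BSDG31}.

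Next I would absorb the leftover weight $\langle\eta-\eta'\rangle e^{\mu\langle\eta-\eta'\rangle^{1/2}}$ into the Gevrey-$\tfrac12$ decay of $\widetilde G(v,v',\rho;\cdot)$: by Lemma \ref{pre200} for $\mathcal G_k$, by Proposition \ref{GKA11} (the bounds \eqref{GKA13}--\eqref{GKA14}, \eqref{GKA14.2}--\eqref{GKA14.3}) for $k_\epsilon$, and by the standing bound $\big\|e^{\delta_0\langle\xi\rangle^{1/2}}\widehat{\partial_vB^\ast}(\xi)\big\|_{L^2}\lesssim1$, the kernel $\widetilde G$ carries a factor $e^{-\delta_0\langle\eta-\eta'\rangle^{1/2}}$ while satisfying the same pointwise bounds in $(v,v',\rho)$ as in Section \ref{sec:bts}; since $\mu\ll\delta_0$, the weight costs only the reduction of $\delta_0$ to, say, $\delta_0/2$. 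Hence $\widetilde{\mathcal C(h)}$ is controlled by $\langle\eta\rangle^{-1/2}p(\eta)$ times a convolution of $|\widetilde h|$ against a kernel obeying exactly those Section \ref{sec:bts} estimates, so the operator left after factoring out $\langle\partial_w\rangle^{-1/2}M_\mu$ is dominated by $\mathcal I$ in the sense of the kernel bounds used there. It then remains to rerun the bookkeeping of Lemmas \ref{BSDG6} and \ref{BSDG19}: decompose $k_\epsilon$ into the pieces attached to $(1-\Psi(\rho))$, $(1-\Psi^\ast(v'))$, $k^1_\epsilon$ and $k^2_\epsilon$, localize in $w$ by $\Psi(w-j)$, sum over $j\in\Z$ via \eqref{BSDG18}, and estimate with \eqref{GKA8}, \eqref{GKA13}--\eqref{GKA14}, \eqref{GKA14.2}--\eqref{GKA14.3}, \eqref{BSDG4.005}--\eqref{BSDG4.006} and \eqref{BSDG4.007}, which yields \eqref{BSDG31}.

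The hard part will be the most singular piece $k^2_\epsilon$, exactly as in Lemma \ref{BSDG6}, since $(\rho+i\epsilon^{1/3})^{-1}$ is not integrable near $\rho=0$: there one must pass entirely to the $(v,w)$-frequencies, use the mixed bound \eqref{GKA14.3} together with the Gevrey decay \eqref{BSDG4.005} of the localized Green's function, and be careful to place the extra commutator weight $\langle\eta-\eta'\rangle e^{\mu\langle\eta-\eta'\rangle^{1/2}}$ on the $\mathcal G_k$-frequency, where Lemma \ref{pre200} leaves ample room, rather than on $k^2_\epsilon$. A secondary bookkeeping point is to verify that the $\langle\partial_w\rangle^{-1/2}$ produced by the symbol commutator can be kept outside the entire operator, so that the right side of \eqref{BSDG31} is literally $\langle k,\partial_v\rangle\langle\partial_w\rangle^{-1/2}M_\mu\{\mathcal I(h)\}$ and the estimate feeds directly into the commutator iteration of Section \ref{sec:bts}.
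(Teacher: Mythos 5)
Your proposal is correct and follows essentially the same route as the paper: the same decomposition of $\mathcal{I}$ into the pieces attached to $(1-\Psi(\rho))$, $(1-\Psi^\ast(v'))$, $k^1_\epsilon$, $k^2_\epsilon$, the same symbol-difference estimate for $e^{\mu\langle k,\eta\rangle^{1/2}}$ (your bound is the paper's \eqref{BSDG36.1} after absorbing $m_\mu(\eta')$ into $e^{\mu\langle\eta-\eta'\rangle^{1/2}}m_\mu(\eta)$), the same absorption of the resulting weight into the Gevrey decay of the kernels using $\mu\ll\delta_0$, and the same passage to the full $(v,w)$-Fourier side for the singular piece $k^2_\epsilon$ via \eqref{GKA14.3} and \eqref{BSDG4.005}.
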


\begin{proof}
Using a Gevrey cutoff function $\Psi$ as in \eqref{GKA10} and $\Psi^\ast$ as in \eqref{BSDG13}, we define for $v,w\in\R$, 
\begin{equation}\label{BSDG32}
\mathcal{I}_{N}(h)(v,w):=\int_{\R^2}\mathcal{G}_{k}(v,v';w)k_{\epsilon}(v',\rho;w)(1-\Psi(\rho))\frac{\partial_\rho B^\ast(\rho+w)}{B^\ast(\rho+w)} \frac{h(\rho,w)}{\rho+i\epsilon^{1/3}} d\rho dv',
\end{equation}
\begin{equation}\label{BSDG32.5}
\mathcal{I}_{L}(h)(v,w):=\int_{\R^2}\mathcal{G}_{k}(v,v';w)(1-\Psi^\ast(v'))k_{\epsilon}(v',\rho;w)\Psi(\rho)\frac{\partial_\rho B^\ast(\rho+w)}{B^\ast(\rho+w)} \frac{h(\rho,w)}{\rho+i\epsilon^{1/3}} d\rho dv',
\end{equation}
\begin{equation}\label{BSDG33}
\mathcal{I}_{1}(h)(v,w):=\int_{\R^2}\mathcal{G}_{k}(v,v';w)\Psi^\ast(v')k^1_{\epsilon}(v',\rho;w)\Psi^\ast(\rho)\frac{\partial_\rho B^\ast(\rho+w)}{B^\ast(\rho+w)} \frac{h(\rho,w)}{\rho+i\epsilon^{1/3}} d\rho dv',
\end{equation}
\begin{equation}\label{BSDG34}
\mathcal{I}_{2}(h)(v,w):=\int_{\R^2}\mathcal{G}_{k}(v,v';w)\Psi^\ast(v')k^2_{\epsilon}(v',\rho;w)\Psi^\ast(\rho)\frac{\partial_\rho B^\ast(\rho+w)}{B^\ast(\rho+w)} \frac{h(\rho,w)}{\rho+i\epsilon^{1/3}} d\rho dv'.
\end{equation}
It suffices to prove that for $\ast\in\{N, L,1, 2\}$,
\begin{equation}\label{BSDG35}
\left\|\langle k,\partial_v\rangle\left[M_\mu\big\{\mathcal{I}(h_\ast)\big\}(v,w)-\mathcal{I}(M_\mu h_\ast)(v,w)\right]\right\|_{L^2(\R^2)}\lesssim \left\|\langle k,\partial_v\rangle\langle\partial_w\rangle^{-1/2} M_\mu\big\{\mathcal{I}(h)\big\}(v,w)\right\|_{L^2(\R^2)}.
\end{equation}
For notational convenience, we set for $\eta\in\R$,
\begin{equation}\label{BSDG35.1}
m_\mu(\eta):=e^{\mu\langle k,\eta\rangle^{1/2}}.
\end{equation}

We first prove \eqref{BSDG35} for the case $\ast=N$. Recall that $\partial_vB^\ast$ is compactly supported. Using the bound \eqref{BSDG4.005} and \eqref{GKA13}, we obtain that 
\begin{equation}\label{BSDG36}
\begin{split}
&|k|\left|m_\mu(\eta)\widehat{\,\,\mathcal{I}_{N}(h)}(v,\eta)-\widehat{\,\,\mathcal{I}_{N}(M_\mu h)}(v,\eta)\right|+\left|\partial_v\Big[m_\mu(\eta)\widehat{\,\,\mathcal{I}_{N}(h)}(v,\eta)-\widehat{\,\,\mathcal{I}_{N}(M_\mu h)}(v,\eta)\Big]\right|\\
&\lesssim \int_{\R^3}e^{-\delta_0|v-\rho|}\langle\rho\rangle^{-1}\big|m_\mu(\eta)-m_\mu(\eta-\alpha-\beta-\gamma)\big|\big|\widehat{\,\,h\,\,}(\rho,\eta-\alpha-\beta-\gamma)\big|e^{-\delta_0\langle\alpha,\beta,\gamma\rangle^{1/2}}\,d\alpha d\beta d\gamma d\rho.
\end{split}
\end{equation}
The desired bound \eqref{BSDG35} for $\ast=N$ follows from \eqref{BSDG36}, using also the pointwise inequality for $\alpha,\beta\in\R$,
\begin{equation}\label{BSDG36.1}
|m_\mu(\alpha)-m_\mu(\beta)|\lesssim \langle\alpha,\beta\rangle^{-1/2}|\alpha-\beta|\Big[m_\mu(\alpha)+m_\mu(\beta)\Big].
\end{equation}

The proof of \eqref{BSDG35} for the case $\ast=L$ follows similar lines, using also the pointwise inequality \eqref{BSDG15.001}.

We now turn to the proof of \eqref{BSDG35} for the case $\ast=1$. Using \eqref{BSDG4.005} and \eqref{GKA14.2}, we have 
\begin{equation}\label{BSDG37}
\begin{split}
&|k|\left|m_\mu(\eta)\widehat{\,\,\mathcal{I}_{1}(h)}(v,\eta)-\widehat{\,\,\mathcal{I}_{1}(M_\mu h)}(v,\eta)\right|+\left|\partial_v\Big[m_\mu(\eta)\widehat{\,\,\mathcal{I}_{1}(h)}(v,\eta)-\widehat{\,\,\mathcal{I}_{1}(M_\mu h)}(v,\eta)\Big]\right|\\
&\lesssim \int_{\R^3}e^{-\delta_0|v-\rho|}\langle\rho\rangle^{-1}\big|m_\mu(\eta)-m_\mu(\eta-\alpha-\beta-\gamma)\big|\big|\widehat{\,\,h\,\,}(\rho,\eta-\alpha-\beta-\gamma)\big|e^{-\delta_0\langle\alpha,\beta,\gamma\rangle^{1/2}}\,d\alpha d\beta d\gamma d\rho.
\end{split}
\end{equation}
The desired bound \eqref{BSDG35} for $\ast=1$ then follows from \eqref{BSDG37}.

Finally, we treat the case $\ast=2$. Using \eqref{BSDG4.005} and \eqref{GKA14.3}, we obtain that 
\begin{equation}\label{BSDG38}
\begin{split}
&\left|m_\mu(\eta)\widetilde{\,\,\mathcal{I}_{2}(h)}(\xi,\eta)-\widetilde{\,\,\mathcal{I}_{2}(M_\mu h)}(\xi,\eta)\right|\\
&\lesssim  \int_{\R^5}\big|\widetilde{\,\,\mathcal{G}_{k,0}}(\xi,\alpha_1;\alpha_2)\big|\big|\widetilde{\,\,k^2_{\epsilon,0}}(-\alpha_1,-\gamma-\beta-\alpha_4;\alpha_3)\big|\\
&\times e^{-\delta_0\langle\beta\rangle^{1/2}}|m_\mu(\eta-\beta-\alpha_2-\alpha_3)-m_\mu(\eta)|\big|\widehat{\,\,h\,\,}(\alpha_4,\eta-\beta-\alpha_2-\alpha_3)\big|\,d\Xi\\
&\lesssim \int_{\R^5}\frac{e^{-\delta_0\langle\xi+\alpha_1,\alpha_2,\alpha_3,\beta\rangle^{1/2}}}{k^2+\xi^2}\left|e^{\delta_0\langle\alpha_3\rangle^{1/2}}\widetilde{\,\,k^2_{\epsilon,j}}(-\alpha_1,-\gamma-\beta-\alpha_4;\alpha_3)\right|\\
&\times|m_\mu(\eta-\beta-\alpha_2-\alpha_3)-m_\mu(\eta)|\big|\widehat{\,\,h\,\,}(\alpha_4,\eta-\beta-\alpha_2-\alpha_3)\big|\,d\Xi,
\end{split}
\end{equation}
where $d\Xi=d\alpha_1d\alpha_2d\alpha_3d\alpha_4d\beta d\gamma$. The desired bound \eqref{BSDG35} for $\ast=2$ then follows from \eqref{BSDG37}.

\end{proof}

We are now ready to prove Proposition \ref{int14} (which we recall below) on the main Gevrey bounds for the renormalized spectral density function. 
\begin{proposition}\label{BSDG40}
There exists $\nu_0\in(0,1/8)$ sufficiently small, such that the following statement holds. For $k\in\Z\backslash\{0\}$, $\nu\in(0,\nu_0)$ and $0<\delta\ll \delta_0$, the profile for the spectral density function, $\Theta_{k,\nu}(v,w)$, satisfies the bounds 
\begin{equation}\label{BSDG42}
\Big\|(|k|+|\xi|)\widetilde{\,\,\Theta_{k,\nu}}(\xi,\eta)\Big\|_{L^2(\R^2)}\lesssim \Big\|\widehat{\,\,F_{0k}}(\eta)\Big\|_{L^2(\R)}.
\end{equation}
and
\begin{equation}\label{BSDG42.5}
\Big\|(|k|+|\xi|)e^{\delta \langle k,\eta\rangle^{1/2}}\widetilde{\,\,\Theta_{k,\nu}}(\xi,\eta)\Big\|_{L^2(\R^2)}\lesssim \Big\|e^{\delta\langle k,\eta\rangle^{1/2}}\widehat{\,\,F_{0k}}(\eta)\Big\|_{L^2(\R)}.
\end{equation}
\end{proposition}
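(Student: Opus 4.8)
The plan is to derive the Gevrey bound \eqref{BSDG42.5} (which is Proposition \ref{int14}) from the limiting absorption principle of Proposition \ref{BSDG2}, combined with the kernel estimates of Proposition \ref{GKA11} packaged in Lemmas \ref{BSDG6}, \ref{BSDG19}, and the commutator estimate of Lemma \ref{BSDG28}. First I would observe that $\Theta_{k,\nu}$ solves the integral equation \eqref{BSDGL1} with $\varphi=\Theta_{k,\nu}$ and right-hand side
\[
F(v,w)=\int_{\R^2}\mathcal{G}_k(v,v';w)k_\epsilon(v',\rho;w)\frac{1}{(B^\ast(\rho+w))^2}\frac{F_{0k}(\rho+w)}{\rho+i\epsilon^{1/3}}\,d\rho\,dv',
\]
where $\epsilon=\nu/k$. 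Proposition \ref{BSDG2}(i) gives the a priori bound $\|\Theta_{k,\nu}(\cdot,w)\|_{H^1_k(\R)}\lesssim\|F(\cdot,w)\|_{H^1_k(\R)}$ for each fixed $w$; squaring and integrating in $w$, together with Lemma \ref{BSDG6} applied with $f=F_{0k}$ (noting $F_{0k}$ is already a fixed function, not depending on $w$, so the $w$-integration is harmless), yields the low-regularity bound \eqref{BSDG42}, i.e.
\[
\big\|(|k|+|\xi|)\widetilde{\,\,\Theta_{k,\nu}}(\xi,\eta)\big\|_{L^2(\R^2)}\lesssim\big\|\widehat{\,\,F_{0k}}(\eta)\big\|_{L^2(\R)}.
\]

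For the Gevrey upgrade \eqref{BSDG42.5} I would apply the Fourier multiplier $M_\delta$ (with symbol $e^{\delta\langle k,\eta\rangle^{1/2}}$, acting in the $w$ variable) to the integral equation \eqref{BSDGL1}. Writing $\Theta^\delta:=M_\delta\Theta_{k,\nu}$, commuting $M_\delta$ past the operator $\mathcal{I}$ of Lemma \ref{BSDG28} produces
\[
\Theta^\delta(v,w)+i\,\mathcal{I}(\Theta^\delta)(v,w)=M_\delta F(v,w)+\mathcal{E}(v,w),
\]
where $\mathcal{E}=i\big[\mathcal{I}(M_\delta\Theta_{k,\nu})-M_\delta\{\mathcal{I}(\Theta_{k,\nu})\}\big]$ is the commutator term. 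Proposition \ref{BSDG2}(ii) applied to this equation (the operator $I+i\mathcal{I}$ being invertible with the uniform bound $\kappa$ from \eqref{LAP23}) gives
\[
\big\|\langle k,\partial_v\rangle\Theta^\delta\big\|_{L^2(\R^2)}\lesssim\big\|\langle k,\partial_v\rangle M_\delta F\big\|_{L^2(\R^2)}+\big\|\langle k,\partial_v\rangle\mathcal{E}\big\|_{L^2(\R^2)}.
\]
The first term on the right is handled by Lemma \ref{BSDG19} with $\mu=\delta$ and $f=F_{0k}$, giving $\lesssim\|e^{\delta\langle k,\xi\rangle^{1/2}}\widehat{F_{0k}}(\xi)\|_{L^2}$, which is exactly the desired right-hand side. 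For the commutator term, Lemma \ref{BSDG28} bounds it by
\[
\big\|\langle k,\partial_v\rangle\langle\partial_w\rangle^{-1/2}M_\delta\{\mathcal{I}(\Theta_{k,\nu})\}\big\|_{L^2(\R^2)}
=\big\|\langle k,\partial_v\rangle\langle\partial_w\rangle^{-1/2}M_\delta\big(F-\Theta_{k,\nu}\big)\big\|_{L^2(\R^2)},
\]
using the equation $\mathcal{I}(\Theta_{k,\nu})=F-\Theta_{k,\nu}$ (from \eqref{BSDGL1} with $\varphi=\Theta_{k,\nu}$, up to the factor $i$).

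The main obstacle is closing this estimate: the commutator term contains $\langle k,\partial_v\rangle\langle\partial_w\rangle^{-1/2}M_\delta\Theta_{k,\nu}$, which looks as strong as the quantity $\langle k,\partial_v\rangle M_\delta\Theta_{k,\nu}$ we are trying to control. The gain comes from the half-derivative smoothing $\langle\partial_w\rangle^{-1/2}$: one runs a continuity/bootstrap argument in the Gevrey index, introducing a family of symbols $e^{\delta'\langle k,\eta\rangle^{1/2}}$ for $\delta'\in[0,\delta]$ and using the elementary inequality $\langle\partial_w\rangle^{-1/2}e^{\delta'\langle k,\eta\rangle^{1/2}}\lesssim_N \langle k\rangle^{-1/2}e^{\delta'\langle k,\eta\rangle^{1/2}}+\text{(lower order)}$ together with \eqref{BSDG36.1}, so that on a short interval of $\delta'$ the commutator is absorbed into the left-hand side at the cost of a term controlled by the already-established bound at the smaller index; iterating finitely many times (or using a differential inequality in $\delta'$) reaches $\delta'=\delta$. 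This is precisely the commutator bootstrap of \cite{IJacta,JiaG}, and I would cite those references for the structure of the induction while supplying the quantitative inputs \eqref{BSDG8}, \eqref{BSDG21}, \eqref{BSDG31} proven above. Once the bound on $\|\langle k,\partial_v\rangle\Theta^\delta\|_{L^2(\R^2)}$ is closed, Plancherel converts $\langle k,\partial_v\rangle$ into $|k|+|\xi|$ (up to harmless constants) and $M_\delta$ into $e^{\delta\langle k,\eta\rangle^{1/2}}$, yielding \eqref{BSDG42.5}; restoring the original normalization recovers \eqref{int15} and completes the proof of Proposition \ref{int14}.
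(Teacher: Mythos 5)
Your proposal follows the paper's architecture exactly up to the last step: the integral reformulation \eqref{BSDG43}, the use of Proposition \ref{BSDG2} together with Lemma \ref{BSDG6} for the unweighted bound \eqref{BSDG42}, and the application of $M_\mu$ to the integral equation with the commutator controlled by Lemmas \ref{BSDG19} and \ref{BSDG28}. The only divergence is in how the commutator term is absorbed. The paper closes in one shot: splitting into high and low frequencies in $w$, the $\langle\partial_w\rangle^{-1/2}$ smoothing gives a factor $\gamma=N^{-1/2}$ on frequencies $|\eta|\ge N$ (absorbed into the left-hand side for $\gamma$ small), while on $|\eta|\le N$ the multiplier $m_\mu$ costs only $C_\gamma$ times $e^{\mu\langle k\rangle^{1/2}}$, so that piece is controlled by $C_\gamma\|\langle k,\partial_v\rangle\Theta_{k,\nu}\|_{L^2}$, which is already bounded via \eqref{BSDG42} by $\|F_{0k}\|_{L^2}\le\|M_\mu F_{0k}\|_{L^2}$ --- no induction in the Gevrey index is needed. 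Your bootstrap in $\delta'\in[0,\delta]$ would also close (each step is the same absorption with the lower-order piece controlled by the bound at the smaller index), but it is an unnecessarily elaborate version of the same mechanism. One quantitative point to fix: your claimed inequality with absorption constant $\langle k\rangle^{-1/2}$ is not uniformly small (take $|k|=1$); the smallness must come from a large frequency cutoff $N$ in $\eta$ (or from taking $\mu$ small via \eqref{BSDG36.1}), exactly as in \eqref{BSDG48}.
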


\begin{proof}
We can reformulate equation \eqref{BSDG1} in the integral form as
\begin{equation}\label{BSDG43}
\Theta_{k,\nu}(v,w)+i\int_{\R^2}\mathcal{G}_{k}(v,v';w)k_{\epsilon}(v',\rho;w)\frac{\partial_\rho B^\ast(\rho+w)}{B^\ast(\rho+w)} \frac{\Theta_{k,\nu}(\rho,w)}{\rho+i\epsilon^{1/3}} d\rho dv'=F_{k,\nu}(v,w),\quad {\rm for}\,\,v\in\R,
\end{equation}
for $v, w\in\R$, where 
\begin{equation}\label{BSDG44}
F_{k,\nu}(v,w):=\int_{\R^2}\mathcal{G}_k(v,v';w)k_\epsilon(v',\rho;w)\frac{1}{(B^\ast(\rho+w))^2}\frac{F_{0k}(\rho+w)}{\rho+i\epsilon^{1/3}} d\rho dv'.
\end{equation}
By Lemma \ref{BSDG19}, we have the bounds 
\begin{equation}\label{BSDG45}
\|\langle k,\partial_v\rangle F_{k,\nu}\|_{L^2(\R^2)}\lesssim \|F_{0k}\|_{L^2(\R)},
\end{equation}
and
\begin{equation}\label{BSDG46}
\|\langle k,\xi\rangle e^{\delta \langle k,\eta\rangle^{1/2}} \widetilde{\,\,F_{k,\nu}}(\xi,\eta)\|_{L^2(\R^2)}\lesssim \|e^{\delta \langle k,\eta\rangle^{1/2}}\widetilde{\,\,F_{0k}}(\xi)\|_{L^2(\R)}.
\end{equation}
The desired bound \eqref{BSDG42} follows from \eqref{BSDG43}, Proposition \ref{BSDG2} and the bound \eqref{BSDG45}. 

To establish \eqref{BSDG42.5}, we apply the Fourier multiplier operator $M_\mu$ to \eqref{BSDG43} (acting on $w$), and use the commutator estimates. More precisely, using \eqref{BSDG43} we get 
\begin{equation}\label{BSDG47}
\begin{split}
&M_\mu\big[\Theta_{k,\nu}(v,\cdot)\big](w)+i\int_{\R^2}\mathcal{G}_{k}(v,v';w)k_{\epsilon}(v',\rho;w)\frac{\partial_\rho B^\ast(\rho+w)}{B^\ast(\rho+w)} \frac{M_\mu\big[\Theta_{k,\nu}(\rho,\cdot)\big](w)}{\rho+i\epsilon^{1/3}} d\rho dv'\\
&=M_\mu\big[F_{k,\nu}(v,\cdot)\big](w)-i\Big\{M_\mu\big[\mathcal{I}(\Theta_{k,\nu})(v,\cdot)\big](w)-\mathcal{I}\big(M_\mu\big[\Theta_{k,\nu}(v,\cdot)\big](w)\big)\Big\},
\end{split}
\end{equation}
for $v,w\in\R$. Lemma \ref{BSDG28} implies that
\begin{equation}\label{BSDG48}
\begin{split}
&\left\|\langle k,\partial_v\rangle \Big\{M_\mu\big[\mathcal{I}(\Theta_{k,\nu})(v,\cdot)\big](w)-\mathcal{I}\big(M_\mu\big[\Theta_{k,\nu}(v,\cdot)\big](w)\big)\Big\}\right\|_{L^2(\R^2)}\\
&\lesssim \gamma\left\|\langle k,\partial_v\rangle M_\mu \Theta_{k,\nu}\right\|_{L^2(\R^2)}+C_\gamma \left\|\langle k,\partial_v\rangle\Theta_{k,\nu}\right\|_{L^2(\R^2)},
\end{split}
\end{equation}
for any $\gamma\in(0,1)$ and a suitable $C_\gamma\in(0,\infty)$, by dividing into high frequencies and low frequencies in $w$. Then it follows from Proposition \ref{BSDG2}, Lemma \ref{BSDG19} and the bound \eqref{BSDG48} that
\begin{equation}\label{BSDG49}
\left\|\langle k,\partial_v\rangle M_\mu \Theta_{k,\nu}\right\|_{L^2(\R^2)}\lesssim \left\|M_\mu F_{0k}\right\|_{L^2(\R)}+ \gamma\left\|\langle k,\partial_v\rangle M_\mu \Theta_{k,\nu}\right\|_{L^2(\R^2)}+C_\gamma \left\|\langle k,\partial_v\rangle\Theta_{k,\nu}\right\|_{L^2(\R^2)}.
\end{equation}
 The desired bound \eqref{BSDG42.5} then follows from \eqref{BSDG49} by choosing $\gamma\in(0,1)$ sufficiently small.

\end{proof}


\section{The spectral representation formula}\label{sec:trf}
In this section, we justify the representation formula \eqref{int6} for $\omega_{0k}\in C_c^\infty(\R)$. The basic idea is to use the general theory of sectorial operators, and we refer to section 4, Chapter II of \cite{Engel} for the well known approach. The main properties we need to establish are the following.
\begin{proposition}\label{rep1}
Assume that $k\in\Z\backslash\{0\}$ and $\omega_{0k}\in C_c^\infty(\R)$. There exists $\nu_\ast\in(0,1)$, such that the following statement holds. Define
\begin{equation}\label{rep2}
\Sigma:=\big\{\lambda\in\C: \,k\, \Re \lambda\leq 0\big\}.
\end{equation}
For any $\lambda\in\Sigma$ and $\nu\in(0,\nu_\ast)$, the following system of equations for $y\in\R$,
\begin{equation}\label{rep3}
\begin{split}
\frac{\nu}{k}\partial_y^2h(y,\lambda)-(ib(y)-\lambda)h(y,\lambda)+ib''(y)\varphi(y,\lambda)&=\omega_{0k}(y),\\
(-k^2+\partial_y^2)\varphi(y,\lambda)&=h(y,\lambda),
\end{split}
\end{equation}
is solvable for a unique solution $h(y,\lambda)\in H^2(\R)$ which is holomorphic for $\lambda$ in the interior of $\Sigma$ and continuous on $\Sigma$. In addition, we have the bounds 
\begin{equation}\label{rep4}
\big\|h(\cdot,\lambda)\big\|_{L^2(\R)}\lesssim_{\nu,\omega_{0k}}\frac{1}{\langle \lambda\rangle}, 
\end{equation}
and
\begin{equation}\label{rep4.5}
 \big\|\partial_\lambda h(\cdot,\lambda)\big\|_{L^2(\R)}\lesssim_{\nu,\omega_{0k}}\frac{1}{\langle \lambda\rangle^2}
\end{equation}

\end{proposition}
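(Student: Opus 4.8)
The plan is to view the operator in \eqref{rep3} as the generalized Airy operator $\mathcal{L}_0 g:=(\nu/k)\partial_y^2 g-ib(y)g$ plus the perturbation $\mathcal{K}g:=ib''(y)(-k^2+\partial_y^2)^{-1}g$, and to run the classical resolvent construction for sectorial operators, feeding in the estimates of Sections \ref{sec:pbk}--\ref{sec:lap} as black boxes. Since $b'\in[\sigma_0,1/\sigma_0]$, $b$ is a bijection of $\R$, so for $\lambda\in\Sigma$ we may write $\Im\lambda=b(y_0)$ and $\Re\lambda=-\alpha$, and then $\epsilon\alpha\ge0$ for $\epsilon:=\nu/k$. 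With this notation $\mathcal{L}_0+\lambda$ is exactly the operator $w\mapsto\epsilon\partial_y^2 w-\alpha w+i(b(y_0)-b(y))w$ of \eqref{LAPM2}, and Proposition \ref{LAPM1} together with Lemma \ref{GKA1} gives that $(\mathcal{L}_0+\lambda)^{-1}$ exists, is bounded $L^2(\R)\to H^2(\R)$ with norm $O_\nu(1)$ uniformly in $\lambda\in\Sigma$ (by the Schur test applied to the kernel bound \eqref{GKA2}--\eqref{GKA3}, which is valid including $\alpha=0$), depends holomorphically on $\lambda$ on $\mathrm{int}\,\Sigma$ (where $-\lambda$ lies in the resolvent set of the dissipative operator $\mathcal{L}_0$), and extends continuously up to $\partial\Sigma$ (the kernel bounds being continuous in $\lambda$ there — this is the limiting absorption principle for the main Airy part).

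\emph{Solvability, uniqueness and $H^2$ regularity.} Equation \eqref{rep3} is equivalent to $\big(I+(\mathcal{L}_0+\lambda)^{-1}\mathcal{K}\big)h=(\mathcal{L}_0+\lambda)^{-1}\omega_{0k}$. Since $(-k^2+\partial_y^2)^{-1}\colon L^2\to H^2$ and $b''$ is smooth and compactly supported, $\mathcal{K}$, hence $(\mathcal{L}_0+\lambda)^{-1}\mathcal{K}$, is compact on $L^2(\R)$, so $I+(\mathcal{L}_0+\lambda)^{-1}\mathcal{K}$ is Fredholm of index zero and its invertibility reduces to injectivity. If $\big(I+(\mathcal{L}_0+\lambda)^{-1}\mathcal{K}\big)h=0$, then $(L_{k,\nu}+\lambda)h=0$, i.e.\ $\psi:=(-k^2+\partial_y^2)^{-1}h$ satisfies $\psi+T_{\epsilon,\alpha,y_0}(ib''\psi)=0$, the homogeneous version of \eqref{LAP31}; Proposition \ref{LAP22} — which is where Assumption \ref{MaAs} enters — then forces $\psi\equiv 0$ via \eqref{LAP23} (equivalently \eqref{LAP33} with $\psi^\ast\equiv0$, see Remark \ref{LAP30.01}), hence $h\equiv0$. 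Therefore, for every $\lambda\in\Sigma$,
\[
h(\cdot,\lambda)=-\big(I+(\mathcal{L}_0+\lambda)^{-1}\mathcal{K}\big)^{-1}(\mathcal{L}_0+\lambda)^{-1}\omega_{0k}
\]
is the unique solution, and it lies in $H^2(\R)$ because $\mathcal{K}h\in H^2$ and $(\mathcal{L}_0+\lambda)^{-1}$ maps into $H^2$. Moreover \eqref{LAP23} yields the uniform invertibility $\big\|\big(I+(\mathcal{L}_0+\lambda)^{-1}\mathcal{K}\big)^{-1}\big\|\lesssim\kappa^{-1}$ on $\Sigma$. Holomorphy of $h(\cdot,\lambda)$ on $\mathrm{int}\,\Sigma$ and continuity on $\Sigma$ follow from the corresponding properties of $(\mathcal{L}_0+\lambda)^{-1}$ and — for the Fredholm inverse — the analytic Fredholm theorem, using that no eigenvalue occurs anywhere on $\Sigma$ by the argument just given.

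\emph{The decay bounds \eqref{rep4}--\eqref{rep4.5}.} Since $\big\|\big(I+(\mathcal{L}_0+\lambda)^{-1}\mathcal{K}\big)^{-1}\big\|\lesssim\kappa^{-1}$ uniformly, it suffices to prove $\|(\mathcal{L}_0+\lambda)^{-1}\omega_{0k}\|_{L^2}\lesssim_{\nu,\omega_{0k}}\langle\lambda\rangle^{-1}$ and, for \eqref{rep4.5}, $\|(\mathcal{L}_0+\lambda)^{-1}h\|_{L^2}\lesssim_{\nu,\omega_{0k}}\langle\lambda\rangle^{-2}$ (using $\partial_\lambda h=-(L_{k,\nu}+\lambda)^{-1}h$, obtained by differentiating \eqref{rep3}). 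The bare resolvent $(\mathcal{L}_0+\lambda)^{-1}$ has norm only $O_\nu(1)$ on $\partial\Sigma$, so the decay must come from the data: for $|\lambda|$ large the critical layer $y_0$ (with $b(y_0)=\Im\lambda$) satisfies $|y_0|\sim|\Im\lambda|$, hence lies far from the fixed compact set $\mathrm{supp}\,\omega_{0k}\cup\mathrm{supp}\,b''$, so there $|{-\alpha}+i(b(y_0)-b(y))|\gtrsim\langle\lambda\rangle$. The parametrix $p_0:=\omega_{0k}\big/\big({-\alpha}+i(b(y_0)-b)\big)$ is smooth, compactly supported, satisfies $\|p_0\|_{L^2}\lesssim\langle\lambda\rangle^{-1}\|\omega_{0k}\|_{L^2}$ and $(\mathcal{L}_0+\lambda)p_0=\omega_{0k}+\epsilon\partial_y^2 p_0$; iterating gives $(\mathcal{L}_0+\lambda)^{-1}\omega_{0k}=\sum_{n=0}^{N}(-1)^n p_n+(-1)^{N+1}(\mathcal{L}_0+\lambda)^{-1}(\epsilon\partial_y^2 p_N)$ with $\|p_n\|\lesssim_{\omega_{0k}}\epsilon^n\langle\lambda\rangle^{-n-1}$ and, by the uniform operator bound, the remainder $\lesssim_\nu\langle\lambda\rangle^{-N-1}$; taking $N=0$ gives \eqref{rep4}. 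For \eqref{rep4.5} one writes $(\mathcal{L}_0+\lambda)^{-1}h=(\mathcal{L}_0+\lambda)^{-1}\big[(\mathcal{L}_0+\lambda)^{-1}(\omega_{0k}-ib''\varphi)\big]$ with $\varphi=(-k^2+\partial_y^2)^{-1}h$, and the same parametrix gains one factor $\langle\lambda\rangle^{-1}$ at each of the two applications of $(\mathcal{L}_0+\lambda)^{-1}$, where for the $ib''\varphi$ piece one only needs $\|b''\varphi\|_{H^2}\lesssim\|h\|_{L^2}\lesssim\langle\lambda\rangle^{-1}$ (from \eqref{rep4} and elliptic regularity of $(-k^2+\partial_y^2)^{-1}$), giving $\|(\mathcal{L}_0+\lambda)^{-1}h\|\lesssim_{\nu,\omega_{0k}}\langle\lambda\rangle^{-2}$. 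For bounded $|\lambda|$ all bounds are trivially $O_\nu(1)$, and the case $k<0$ is identical after replacing $\alpha$ by $-\Re\lambda\le0$.

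I expect the decay estimates \eqref{rep4}--\eqref{rep4.5}, not the solvability, to be the main point: solvability, uniqueness and analyticity are essentially packaged out of Proposition \ref{LAP22} (hence Assumption \ref{MaAs}) and the analytic Fredholm theorem, but since the resolvent of the main Airy part does not decay as $|\lambda|\to\infty$ along $\partial\Sigma$, the gains $\langle\lambda\rangle^{-1}$ and $\langle\lambda\rangle^{-2}$ have to be extracted by hand from the compact support and smoothness of $\omega_{0k}$ and the escape of the critical layer to spatial infinity; the bookkeeping of the $\epsilon$- and $\lambda$-powers in the parametrix iteration (and the fact that only two derivatives are available on $\varphi$) is the delicate part.
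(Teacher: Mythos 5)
Your proposal is correct and follows essentially the same route as the paper: solvability and uniform bounds come from Proposition \ref{LAPM1} together with the limiting absorption principle of Proposition \ref{LAP22} (where Assumption \ref{MaAs} enters), the $\langle\lambda\rangle^{-1}$ gain comes from the critical layer $y_0=b^{-1}(\Im\lambda)$ (or the real part $\alpha=-\Re\lambda$) escaping to infinity relative to the fixed compact supports of $\omega_{0k}$ and $b''$, and the $\langle\lambda\rangle^{-2}$ bound follows by differentiating the system in $\lambda$ and repeating the argument with source $h$. The only cosmetic difference is that you extract the decay via an explicit parametrix iteration $p_0=\omega_{0k}/(-\alpha+i(b(y_0)-b))$, whereas the paper reads it off directly from the pointwise kernel bounds of Lemma \ref{GKA1}; both encode the same mechanism.
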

 
 We note that in the bounds \eqref{rep4}-\eqref{rep4.5} we allow the implied constants to depend on the small viscosity $\nu>0$ and an unspecified norm on $\omega_{0k}$, which is not a problem for us as we only use these bounds in a qualitative way, to justify the identity \eqref{int6}.

\begin{proof}
Assume for the sake of concreteness $k>0$ and correspondingly $\Re \lambda\leq0$. By Propositions \ref{LAPM1} and \ref{LAP22}, the equation \eqref{rep3} is solvable and we have the bound 
\begin{equation}\label{rep5}
\|\varphi(\cdot,\lambda)\|_{H^1(\R)}\lesssim_{\omega_{0k}}1,\quad \|h(\cdot,\lambda)\|_{H^1(\R)}\lesssim_{\nu,\omega_{0k}}1.
\end{equation}
Using Lemma \ref{GKA1} and the assumption on the compact support of $b''$ and $\omega_{0k}$, we obtain that
\begin{equation}\label{rep6}
 \|h(\cdot,\lambda)\|_{H^1(\R)}\lesssim_{\nu,\omega_{0k}}\langle\lambda\rangle^{-1},
\end{equation}
and for $|y|$ sufficiently large (so that $y$ is outside the support of $b''$ and $\omega_{0k}$), $|\lambda|\gg1$,
\begin{equation}\label{rep7}
|h(y,\lambda)|\lesssim_{\nu,\omega_{0k}} e^{-\delta_0\langle\epsilon^{-1/3}\lambda\rangle^{1/2}\epsilon^{-1/3}\langle y\rangle}.
\end{equation}
Notice that $\partial_\lambda \varphi(y,\lambda)$ satisfies the equation for $y\in\R$ and $\lambda\in\Sigma$,
\begin{equation}\label{rep8}
\begin{split}
\frac{\nu}{k}\partial_y^2\partial_\lambda h(y,\lambda)-(ib(y)-\lambda)\partial_\lambda h(y,\lambda)+ib''(y)\partial_\lambda\varphi(y,\lambda)&=h(y,\lambda),\\
(-k^2+\partial_y^2)\partial_\lambda\varphi(y,\lambda)&=\partial_\lambda h(y,\lambda).
\end{split}
\end{equation}
The desired bounds \eqref{rep4}-\eqref{rep4.5} follow from \eqref{rep6}, \eqref{rep8}, the limiting absorption principle, and \eqref{rep7}. The Proposition is now proved.
\end{proof}


\section{Decay of semigroups using resolvent bounds}\label{sec:dsr}

In this section we prove a quantitative bound on the decay of bounded semigroups using resolvent estimates, which is of independent interest. The main theorem and its proof are modifications of a similar result in Wei \cite{Wei3} (see also the earlier work of Helffer and Sj\"ostrand \cite{Helffer2}, and the more recent \cite{Helffer3} which contained a more general form the estimates below, with sharp constants). The key difference with \cite{Wei3} is that we do not assume the operator which generates the semigroup to be ``accretive", and instead require the semigroup to be bounded to begin with. The formulation allows a wider range of applications than semigroups generated by accretive operators.

\begin{theorem}\label{DSR0}
Assume that $\mu>0, M\ge1$. Suppose that $X$ is a separable Hilbert space, and the linear operator $A: \mathcal{D}(A)\subseteq X\to X$ is densely defined and closed.  Assume that $A$ generates the bounded semigroup $e^{At}$, $t\ge0$ with the bound 
\begin{equation}\label{DSR1}
\left\|e^{At}\right\|_{X\to X}\leq M. 
\end{equation}
 Assume that for $\lambda\in\R$, $i\lambda-A: \mathcal{D}(A)\to X$ is invertible and we have the resolvent bound 
\begin{equation}\label{DSR2}
\left\|(i\lambda-A)^{-1}\right\|_{X\to X}\leq \mu^{-1}.
\end{equation}
Then we have the decay estimates for the semigroup $e^{At}, t\ge0$,
\begin{equation}\label{DSR3}
\left\|e^{At}\right\|_{X\to X}\leq C_0 M^2 e^{-\mu t},
\end{equation}
for a universal constant $C_0>0$.

\end{theorem}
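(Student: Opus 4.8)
The plan is to adapt the Gearhart–Prüss/Helffer–Sjöstrand approach as it appears in Wei \cite{Wei3}, replacing the accretivity hypothesis by the a priori semigroup bound \eqref{DSR1}. The starting point is the elementary identity, valid for any $u_0\in\mathcal{D}(A)$ and any $\lambda\in\R$,
\begin{equation}\label{DSRplan1}
(i\lambda-A)^{-1}u_0=\int_0^\infty e^{-i\lambda t}e^{At}u_0\,dt,
\end{equation}
which converges absolutely because of \eqref{DSR1}. Combined with the resolvent bound \eqref{DSR2} this tells us that the $X$-valued function $t\mapsto e^{At}u_0$ has Fourier transform in $L^\infty(\R;X)$ with norm $\le \mu^{-1}\|u_0\|_X$; however, an $L^\infty$ bound on the Fourier transform alone is not enough, and the point of the argument is to upgrade this into an exponential decay statement by a clever choice of test/truncation in the time variable and an application of Plancherel.

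The key steps, in order, would be: (1) Fix $T>0$ and $u_0\in\mathcal{D}(A)$ with $\|u_0\|_X=1$; introduce the truncated function $v(t):=\mathbf{1}_{[0,T]}(t)\,e^{At}u_0$, or more precisely a smooth cutoff version, and compute its Fourier transform $\widehat v(\lambda)$. Using \eqref{DSRplan1} and the semigroup property $e^{At}u_0=e^{A(t-s)}\big(e^{As}u_0\big)$, one expresses $\widehat v(\lambda)$ in terms of the resolvent $(i\lambda-A)^{-1}$ applied to boundary contributions at $t=0$ and $t=T$, each of which has norm $\le M$ by \eqref{DSR1}. (2) Apply Plancherel in $\lambda$ together with the resolvent bound \eqref{DSR2} to deduce $\int_0^T \|e^{At}u_0\|_X^2\,dt\lesssim \mu^{-2}M^2$, i.e. a quantitative $L^2$-in-time bound on the orbit that is \emph{uniform in $T$}, hence $\int_0^\infty\|e^{At}u_0\|_X^2\,dt\lesssim \mu^{-2}M^2$. (3) Convert the integral bound to a pointwise decay bound: since $\|e^{At}u_0\|_X\le M\|e^{As}u_0\|_X$ for $t\ge s$ (by \eqref{DSR1} applied to the later portion of the orbit), the map $s\mapsto \|e^{As}u_0\|_X^2$ is, up to the factor $M^2$, essentially nonincreasing, so
\begin{equation}\label{DSRplan2}
t\,\|e^{At}u_0\|_X^2\le M^2\int_0^t\|e^{As}u_0\|_X^2\,ds\lesssim M^4\mu^{-2},
\end{equation}
giving $\|e^{At}\|_{X\to X}\lesssim M^2(\mu t)^{-1/2}$ for all $t>0$. (4) Finally bootstrap this polynomial decay into the exponential rate $e^{-\mu t}$: pick $t_0$ with $\mu t_0$ a large absolute constant so that $\|e^{At_0}\|_{X\to X}\le \tfrac12$ — wait, this only gives decay at rate related to $1/t_0$, not the sharp $\mu$ — so instead one must run the $L^2$ argument on the rescaled/shifted semigroup $e^{(A+\mu)t}$, or more cleanly, repeat step (1)–(2) with the weight $e^{\mu t}$ inserted, i.e. estimate $\int_0^T e^{2\mu t}\|e^{At}u_0\|_X^2\,dt$ using that $(i\lambda-\mu-A)$ is still boundedly invertible with the \emph{same} constant $\mu^{-1}$ on the relevant range (this is where one uses that \eqref{DSR2} holds on all of $i\R$ and a perturbation/contour-shift argument, exploiting $\|e^{At}\|\le M$ to control $\|(i\lambda-A)^{-1}\|$ for $\Re\lambda\in[0,\mu)$ via the Neumann series). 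Then steps (2)–(3) applied to $e^{\mu t}e^{At}u_0$ yield $\|e^{At}u_0\|_X\lesssim M^2 e^{-\mu t}$.

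The main obstacle is step (4): making the contour shift rigorous, i.e. showing that the resolvent bound on the imaginary axis plus the semigroup bound \eqref{DSR1} forces a resolvent bound in the strip $0\le\Re\lambda<\mu$ that is good enough to run the weighted $L^2$ estimate and recover the \emph{sharp} constant $\mu$ in the exponent. Concretely, from \eqref{DSR1} one has $\|(z-A)^{-1}\|\le (\Re z)^{-1}$ for $\Re z>0$, and from \eqref{DSR2} one has $\|(i\lambda-A)^{-1}\|\le\mu^{-1}$; interpolating (via the resolvent identity $(z-A)^{-1}=(i\lambda-A)^{-1}+(i\lambda-z)(z-A)^{-1}(i\lambda-A)^{-1}$ with $z=\sigma+i\lambda$, $0\le\sigma<\mu$) gives $\|(z-A)^{-1}\|\le \mu^{-1}/(1-\sigma\mu^{-1})$, which degenerates exactly as $\sigma\uparrow\mu$ — this is the delicate point where one cannot take $\sigma=\mu$ but must instead take $\sigma=\mu-\delta$, carry the $\delta$ through the Plancherel estimate, and let $\delta\to0$ at the end, absorbing the blow-up into the polynomial prefactor which is then killed by step (3). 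Tracking constants carefully through this limiting procedure, and verifying that the smooth-cutoff boundary terms in step (1) do not spoil the weighted estimate, is the part that requires genuine care; everything else is routine Plancherel and semigroup bookkeeping.
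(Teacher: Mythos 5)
Your steps (1)--(3) are sound and use the same basic tool as the paper (Fourier transform in time of a truncated orbit, the resolvent bound, Plancherel), but the entire difficulty of the theorem sits in your step (4), and the resolution you propose does not close. Two concrete problems. First, the weighted Plancherel inequality
$\int_0^{T} e^{2\sigma t}\chi^2\|e^{At}u_0\|^2\,dt\le(\mu-\sigma)^{-2}\int_0^{T} e^{2\sigma t}|\chi'|^2\|e^{At}u_0\|^2\,dt$
has a far--boundary contribution of size $\sim M^2 e^{2\sigma T}$ (or, with a gentle ramp of length $L$, of size $\sim (\mu-\sigma)^{-2}L^{-1}M^2e^{2\sigma L}\cdot e^{2\sigma T}\|e^{AT}u_0\|^2$), which is not uniform in $T$ and cannot be absorbed into the left-hand side when $\sigma$ is close to $\mu$: absorption would require $L e^{-2\sigma L}\gtrsim(\mu-\sigma)^{-2}M^4$, and the left side is at most $(2e\sigma)^{-1}$. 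Second, and more fundamentally, even granting a bound $\int_0^\infty e^{2(\mu-\delta)t}\|e^{At}u_0\|^2\,dt\le C(\delta)$ with $C(\delta)\to\infty$ as $\delta\to0$, converting to a pointwise bound and optimizing $\delta\sim 1/t$ necessarily leaves a prefactor growing polynomially in $t$ in front of $e^{-\mu t}$; this is not something step (3) can remove. The scalar example $A=-\mu$ on $X=\C$ (where $M=1$ and both hypotheses hold with equality) has $\int_0^\infty e^{2\mu t}\|e^{At}\|^2\,dt=\infty$, so the critical-weight $L^2$ quantity you are trying to reach in the limit is genuinely divergent. Your route therefore yields at best $\|e^{At}\|\lesssim M^2\sqrt{1+\mu t}\,e^{-\mu t}$, i.e.\ $C_\theta M^2e^{-\theta\mu t}$ for every $\theta<1$, not the stated estimate with the exact rate $\mu$ and a universal constant.

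The paper avoids both issues with a single ``entanglement'' inequality $\int\varphi^2\|g\|^2\le\mu^{-2}\int|\varphi'|^2\|g\|^2$ (normalize $\mu=1$), applied for each fixed target time $t$ with a weight that is \emph{exactly} $e^{s-1}$ on the bulk $[1,t]$ --- so that $|\varphi'|^2-|\varphi|^2\equiv0$ there and the critical exponential weight costs nothing --- and then \emph{stops growing}: it ramps down linearly to zero over $[t,t+L]$ with $L\sim M^2$. The only negative contribution of $|\varphi|^2-|\varphi'|^2$ on $[t,t+L]$ lives on the last unit interval and has size $e^{2(t-1)}/L^2$, which is beaten by the positive contribution on $[t,t+1]$ (bounded below using $\|g(s)\|\ge M^{-1}\|g(t+1)\|$) once $L\ge 10M^2$; the $s=0$ endpoint contributes only $M^2$. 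This gives $\|g(t+1)\|^2e^{2(t-1)}\le 20M^4$ directly, with no limiting procedure and no polynomial loss. The idea missing from your plan is precisely that the exponential weight must be truncated at the target time with a ramp length calibrated to $M$, rather than pushed to $T\to\infty$ at the critical rate.
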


\begin{proof}
By renormalization, we can assume that $\mu=1$. Assume that $C_0\ge10$ is sufficiently large, then in view of the bound \eqref{DSR1} it suffices to consider $t\ge20$. We choose a positive number $L\ge2$ whose value is to be determined below. For $t\ge5$, fix $\varphi\in C^1([0,t+L])$ with $\varphi(0)=\varphi(t+L)=0$. For any $x\in \mathcal{D}(A)$ with $\|x\|_X=1$, set 
\begin{equation}\label{DSR4}
g(t):=e^{At}x.
\end{equation}
Direct computation shows $\varphi(t)g(t)$ satisfies the equation
\begin{equation}\label{DSR5}
\partial_t\big(\varphi(t)g(t)\big)-A\big(\varphi(t)g(t)\big)=\varphi'(t)g(t).
\end{equation}
Taking Fourier transform in $t\in\R$, we obtain that 
\begin{equation}\label{DSR6}
(i\tau-A)\widehat{\,\varphi g\,}(\tau)=\widehat{\,\varphi'g\,}(\tau),\qquad \tau\in\R.
\end{equation}
From \eqref{DSR6} we conclude that for $\tau\in\R$
\begin{equation}\label{DSR7}
\left\|\widehat{\,\varphi g\,}(\tau)\right\|_{X}=\left\|(i\tau-A)^{-1}\widehat{\,\varphi' g\,}(\tau)\right\|_X\leq \left\|\widehat{\,\varphi' g\,}(\tau)\right\|_{X},
\end{equation}
which implies (by Parseval's identity) that
\begin{equation}\label{DSR8}
\int_0^{t+L}|\varphi(s)|^2\|g(s)\|_X^2\,ds\leq \int_0^{t+L}|\varphi'(s)|^2\|g(s)\|_X^2\,ds.
\end{equation}
The inequality \eqref{DSR8}, which is analogous to the entanglement inequality \eqref{BKGL32}, is very powerful in the sense that by appropriately choosing $\varphi$, it allows to connect the bounds on $g(s)$ for $s$ near $t$
with bounds for $g(s)$ near the initial time $s=0$.

Define $\varphi(s)$ for $s\in[0,t+L]$ as 
\begin{equation}\label{DSR9}
\varphi(s)=s{\bf 1}_{[0,1]}(s)+e^{s-1}{\bf 1}_{[1,t]}(s)+e^{t-1}\big(1-\frac{s-t}{L}\big){\bf 1}_{[t,t+L]}(s).
\end{equation}
By \eqref{DSR8}, we have
\begin{equation}\label{DSR10}
\int_t^{t+L}\big(|\varphi(s)|^2-|\varphi'(s)|^2\big)\|g(s)\|_X^2\,ds\leq M^2\int_0^1(1-s^2)\,ds \leq M^2.
\end{equation}
For $s\in[t,t+L]$, 
\begin{equation}\label{DSR11}
|\varphi(s)|^2-|\varphi'(s)|^2=e^{2(t-1)}\Big[\big(1-\frac{s-t}{L}\big)^2-\frac{1}{L^2}\Big],
\end{equation}
therefore using the semigroup bound \eqref{DSR1}, we have the bound 
\begin{equation}\label{DSR12}
\begin{split}
&\int_t^{t+L}\big(|\varphi(s)|^2-|\varphi'(s)|^2\big)\|g(s)\|_X^2\,ds\\
&\ge \int_t^{t+1}M^{-2}\|g(t+1)\|^2_X\big(|\varphi(s)|^2-|\varphi'(s)|^2\big)\,ds-\int_{t+L-1}^{t+L}\big||\varphi(s)|^2-|\varphi'(s)|^2\big|M^2\|g(t+1)\|_X^2\,ds\\
&\ge \|g(t+1)\|_X^2e^{2(t-1)}\Big\{\frac{1}{M^2}\Big((1-\frac{1}{L})^2-\frac{1}{L^2}\Big)-\frac{M^2}{L^2}\Big\}.
\end{split}
\end{equation}
Taking $L\ge10M^2$, then we obtain that
\begin{equation}\label{DSR13}
\int_t^{t+L}\big(|\varphi(s)|^2-|\varphi'(s)|^2\big)\|g(s)\|_X^2\,ds\ge \frac{1}{20M^2}\|g(t+1)\|_X^2e^{2(t-1)},
\end{equation}
which together with \eqref{DSR10} implies that
\begin{equation}\label{DSR14}
\|g(t+1)\|_X^2e^{2(t-1)}\leq 20 M^4.
\end{equation}
Consequently we obtain that
\begin{equation}\label{DSR15}
\|g(t+1)\|_X^2\leq 20e^2M^4e^{-2t}.
\end{equation}
The theorem is proved.

\end{proof}


\section{Proof of the main theorem}\label{sec:pmt}
In this section we give the proof of Theorem \ref{intM1}, using Proposition \ref{int14}. Denote $\epsilon=\nu/k$. For the sake of concreteness, we assume that $\epsilon>0$, as the other case is completely analogous.

We begin with the proof of \eqref{intM3}-\eqref{intM4}. Due to the similarity in the argument, we shall focus only on the proof of \eqref{intM4} which is slightly more complicated. We use the representation formula \eqref{int13.1}, the definition \eqref{GKA6} for the kernel $k_\epsilon$ and equation \eqref{intP8} to obtain that
\begin{equation}\label{pmt3}
\begin{split}
F_k(t,v)=&-\frac{i}{2\pi}e^{-\nu k^2t}\int_{\R^2} e^{ik(v-w)t}k_\epsilon(v-w,\rho;w)(\partial_\rho B^\ast/B^\ast)(\rho+w)\frac{\Theta_{k,\nu}(\rho,w)}{\rho+i\epsilon^{1/3}}\,d\rho dw\\
&+\frac{1}{2\pi}e^{-\nu k^2t}\int_{\R^2} e^{ik(v-w)t}k_\epsilon(v-w,\rho;w)\frac{1}{(B^\ast(\rho+w))^2}\frac{F_{0k}(\rho+w)}{\rho+i\epsilon^{1/3}}\,d\rho dw\\
=&-\frac{i}{2\pi}e^{-\nu k^2t}\int_{\R^2} e^{ikw't}k_\epsilon(w',\rho;v-w')(\partial_\rho B^\ast/B^\ast)(\rho+v-w')\frac{\Theta_{k,\nu}(\rho,v-w')}{\rho+i\epsilon^{1/3}}\,d\rho dw'\\
&+\frac{1}{2\pi}e^{-\nu k^2t}\int_{\R^2} e^{ikw't}k_\epsilon(w',\rho;v-w')\frac{1}{(B^\ast(\rho+v-w'))^2}\frac{F_{0k}(\rho+v-w')}{\rho+i\epsilon^{1/3}}\,d\rho dw', 
\end{split}
\end{equation}
for $v\in\R$ and $t\ge0$. Recall the Gevrey cutoff function $\Psi$ from \eqref{GKA10} and $\Psi^\ast$ from \eqref{BSDG13}. Define the localization functions for $w,\rho\in\R$,
\begin{equation}\label{pmt3.1}
q_N(w,\rho):=1-\Psi(\rho),\quad q_L(w,\rho):=(1-\Psi^\ast(w))\Psi(\rho),\quad q(w,\rho):=\Psi^\ast(w)\Psi(\rho).
\end{equation}
We define for $\dagger\in\{N,L\}$ and $v\in\R, t\ge0$,
\begin{equation}\label{pmt5}
S_k^\dagger(t,v):=-\frac{ie^{-\nu k^2t}}{2\pi}\int_{\R^2} e^{ikwt}k_\epsilon(w,\rho;v-w)(\partial_\rho B^\ast/B^\ast)(\rho+v-w)q_\dagger(w,\rho)\frac{\Theta_{k,\nu}(\rho,v-w)}{\rho+i\epsilon^{1/3}}\,d\rho dw,
\end{equation}
and for $\dagger\in\{1,2\}$ and $v\in\R, t\ge0$
\begin{equation}\label{pmt6}
S_k^\dagger(t,v):=-\frac{ie^{-\nu k^2t}}{2\pi}\int_{\R^2} e^{ikwt}k^\dagger_\epsilon(w,\rho;v-w)(\partial_\rho B^\ast/B^\ast)(\rho+v-w)q(w,\rho)\frac{\Theta_{k,\nu}(\rho,v-w)}{\rho+i\epsilon^{1/3}}\,d\rho dw.
\end{equation}
Define also for $v\in\R, t\ge0$,
\begin{equation}\label{pmt7}
I_k^N(t,v):=\frac{1}{2\pi}e^{-\nu k^2t}\int_{\R^2} e^{ikwt}k_\epsilon(w,\rho;v-w)\frac{1-\Psi(\rho-w)}{(B^\ast(\rho+v-w))^2}\frac{F_{0k}(\rho+v-w)}{\rho+i\epsilon^{1/3}}\,d\rho dw,
\end{equation}
and for $v\in\R, t\ge0$
\begin{equation}\label{pmt8}
I_k^L(t,v):=\frac{1}{2\pi}e^{-\nu k^2t}\int_{\R^2} e^{ikwt}k_\epsilon(w,\rho;v-w)\frac{\Psi(\rho-w)}{(B^\ast(\rho+v-w))^2}\frac{F_{0k}(\rho+v-w)}{\rho+i\epsilon^{1/3}}\,d\rho dw.
\end{equation}
We can decompose
\begin{equation}\label{pmt9}
F_k(t,v)=\sum_{\dagger\in\{N,L,1,2\}}S_k^\dagger(t,v)+\sum_{\dagger\in\{N,L\}}I_k^\dagger(t,v).
\end{equation}
It suffices to prove that for $h_k\in\{S_k^\dagger: \dagger\in\{N,L,1,2\}\}$ and $h_k\in\{I_k^N, I_k^L\}$,
\begin{equation}\label{pmt10}
\left\|e^{\delta\langle k,\xi\rangle^{1/2}}\widehat{\,\,h_k}(t,\xi)\right\|_{L^2(\R)}\lesssim e^{-\nu k^2t}\left\|e^{\delta\langle k,\xi\rangle^{1/2}}\widehat{\,\,F_{0k}}(\xi)\right\|_{L^2(\R)}.
\end{equation}
For notational conveniences, we denote for $w,\rho\in\R$,
\begin{equation}\label{pmt10.1}
E_{\epsilon,\delta_0}(\rho,w):=e^{-\delta_0\langle\epsilon^{-1/3}\rho,\epsilon^{-1/3}w\rangle^{1/2}\epsilon^{-1/3}|w-\rho|}
\end{equation}
For $h_k\in\{S_k^N, S_k^L, S_k^1\}$, we can bound, using \eqref{GKA13}, \eqref{GKA14.2}, and the fact that $\partial_vB^\ast$ is compactly supported, that for $t\ge0$ and $\xi\in\R$,
\begin{equation}\label{pmt11}
\begin{split}
|\widehat{\,\,h_k}(t,\xi)|\lesssim e^{-\nu k^2t}\int_{\R^3}\epsilon^{-1/3}\langle\epsilon^{-1/3}\rho,\epsilon^{-1/3}w\rangle^{1/2}E_{\epsilon,\delta_0}(\rho,w)e^{-\delta_0\langle\alpha\rangle^{1/2}}\frac{|\widehat{\,\,\,\Theta_{k,\nu}}(\rho,\xi-\alpha)|}{\langle\rho\rangle} d\rho dw d\alpha.
\end{split}
\end{equation}
The desired bounds \eqref{pmt10} for $h_k\in\{S_k^N, S_k^L, S_k^1\}$ then follow from \eqref{pmt11} and the bounds \eqref{int15}.

For $h_k=S_k^2$, we take Fourier transform and obtain that
\begin{equation}\label{pmt12}
|\widehat{\,\,h_k}(t,\xi)|\lesssim e^{-\nu k^2t}\int_{\R^4}\left|\widetilde{\,\,k_{\epsilon,0}^2}(\xi-kt,\alpha_1;\beta)\right||\widetilde{(B^\ast\partial B^\ast)}(\gamma)|\left|\widetilde{\,\,\Theta_{k,\nu}}(-\alpha_1-\alpha_2-\gamma,\xi-\beta-\gamma)\right| d\Xi,
\end{equation}
where $d\Xi=d\alpha_1 d\alpha_2d\beta d\gamma$. The desired bound \eqref{pmt10} then follows from the bounds \eqref{int15} and \eqref{GKA14.3}.

We now turn to the terms $h_k\in\{I_k^N, I_k^L\}$. Fix a nonnegative Gevrey-regular partition of unity function $\Psi^p\in C_0^\infty(-2,2)$. More precisely we impose
\begin{equation}\label{pmt13}
\sum_{j\in\Z}\Psi^p(v-j)\equiv1,\quad{\rm for}\,\,v\in\R,
\end{equation}
and
\begin{equation}\label{pmt14}
\sup_{\xi\in\R}\Big[e^{\langle\xi\rangle^{3/4}}|\widehat{\,\,\Psi^p}(\xi)|\Big]\lesssim1.
\end{equation}
Denote for $v\in\R$ and $j\in\Z$, 
\begin{equation}\label{pmt15}
F_{0k}^j(v):=F_{0k}(v)\Psi^{\ast\ast}(v-j)/(B^\ast(v))^2.
\end{equation}
We first treat the case when $h_k=I_k^N$. Define for $j\in\Z$, $t\ge0$ and $v\in\R$,
\begin{equation}\label{pmt16}
\begin{split}
&2\pi e^{\nu k^2t}I_{k,j}^N(t,v):=\int_{\R^2} e^{ikwt}k_\epsilon(w,\rho;v-w)\Psi^p(v-j)\frac{1-\Psi(\rho-w)}{(B^\ast(\rho+v-w))^2}\frac{F_{0k}(\rho+v-w)}{\rho+i\epsilon^{1/3}}\,d\rho dw\\
&=\sum_{\ell\in\Z}\int_{\R^2} e^{ikwt}k_\epsilon(w,\rho;v-w)\Psi^p(v-j)\big[1-\Psi(\rho-w)\big]\Psi(\rho-w-\ell)\frac{F^{j+\ell}_{0k}(\rho+v-w)}{\rho+i\epsilon^{1/3}}\,d\rho dw.
\end{split}
\end{equation}
Taking Fourier transform in $v$ in \eqref{pmt16} and using the bound \eqref{GKA13}, we obtain that for $\xi\in\R$ and $t\ge0$,
\begin{equation}\label{pmt17}
\begin{split}
&e^{-\nu k^2t}\big|\widehat{\,\,I_{k,j}^N}(t,\xi)\big|\\
&\lesssim \sum_{\ell\in\Z}\int_{\R^3}|\widehat{\,\,k_{\epsilon,j+\ell-\rho}}(w,\rho;\alpha)|\big[1-\Psi(\rho-w)\big]\Psi(\rho-w-\ell)\frac{|\widehat{\,\,F_{0k}^{j+\ell}}(\xi-\alpha)|}{|\rho+i\epsilon^{1/3}|} d\rho dw d\alpha\\
&\lesssim  \sum_{\ell\in\Z\backslash\{0\}}\int_{\R^3}e^{-(\delta_0/2)\langle\epsilon^{-1/3}w,\epsilon^{-1/3}\rho\rangle^{1/2}\epsilon^{-1/3}-\delta_0\langle\alpha\rangle^{1/2}}{\bf 1}_{[-10,10]}(\rho-w-\ell)\big|\widehat{\,\,F_{0k}^{j+\ell}}(\xi-\alpha)\big|\,d\rho dw d\alpha
\end{split}
\end{equation}
The desired bound \eqref{pmt10} then follow from \eqref{pmt17}.

Finally we treat the most difficult case when $h_k=I_k^L$. We need a more precise understanding of the singularity of $k_\epsilon(v,\rho;w)$. Decompose for $v,\rho,w\in\R$,
\begin{equation}\label{pmt18}
k_\epsilon(v,\rho;w):=m_\epsilon(v,\rho;w)+n_\epsilon(v,\rho;w),
\end{equation}
where $m_\epsilon(v,\rho;w)$ satisfies for $v,\rho,w\in\R$,
\begin{equation}\label{pmt19}
\begin{split}
&\Big[\epsilon \partial_v^2+\epsilon\frac{\partial_{\rho}B^\ast(\rho+w)}{B^\ast(\rho+w)}\partial_v-i\frac{v}{(B^\ast(\rho+w))^2}\Big]m_\epsilon(v,\rho;w)\\
&=\Big[\epsilon\frac{\partial_{\rho}B^\ast(\rho+w)}{B^\ast(\rho+w)}\partial_v-\epsilon\frac{\partial_{\rho}B^\ast(v+w)}{B^\ast(v+w)}\partial_v-i\frac{v}{(B^\ast(\rho+w))^2}+i\frac{v}{(B^\ast(v+w))^2}\Big]k_\epsilon(v,\rho;w),
\end{split}
\end{equation}
and $n_\epsilon(v,\rho;w)$ satisfies for $v,\rho,w\in\R$
\begin{equation}\label{pmt20}
\begin{split}
\Big[\epsilon \partial_v^2+\epsilon\frac{\partial_{\rho}B^\ast(\rho+w)}{B^\ast(\rho+w)}\partial_v-i\frac{v}{(B^\ast(\rho+w))^2}\Big]n_\epsilon(v,\rho;w)=(\rho+i\epsilon^{1/3})\delta(v-\rho).
\end{split}
\end{equation}
For any $w_0\in\R$ and $v,\rho,w\in\R$, we set
\begin{equation}\label{pmt21}
m_{\epsilon,w_0}(v,\rho;w):=m_\epsilon(v,\rho;w)\Psi(w-w_0).
\end{equation}
It follows from the bound \eqref{GKA13} that for $v,\rho,w,w_0\in\R$ and $\xi\in\R$,
\begin{equation}\label{pmt22}
\big|\widehat{\,\,m_{\epsilon,w_0}}(v,\rho;\xi)\big|\lesssim \epsilon^{-1/3}|v-\rho|\langle\epsilon^{-1/3}\rho\rangle^{1/2}e^{-\delta_0\langle\epsilon^{-1/3}\rho,\epsilon^{-1/3}v\rangle^{1/2}\epsilon^{-1/3}|v-\rho|} e^{-\delta_0\langle\xi\rangle^{1/2}}.
\end{equation}
To study the property of $n_\epsilon$, we take Fourier transform in $v$ in the equation \eqref{pmt20} and obtain that for $\rho,w,\xi\in\R$,
\begin{equation}\label{pmt22.1}
\Big[-\epsilon\xi^2+i\epsilon\xi\frac{\partial_\rho B^\ast(\rho+w)}{B^\ast(\rho+w)}+\frac{\partial_\xi}{(B^\ast(\rho+w))^2}\Big]\widehat{\,\,n_\epsilon}(\xi,\rho;w)=(\rho+i\epsilon^{1/3})e^{-i\rho\xi}.
\end{equation}
Therefore, recalling the definition \eqref{GKA32.1}, we get that for $\rho,w,\xi\in\R$,
\begin{equation}\label{pmt22.2}
\begin{split}
\widehat{\,\,n_\epsilon}(\xi,\rho;w)&=-(\rho+i\epsilon^{1/3})\int_\xi^\infty e^{-(\epsilon/3)(\gamma^3-\xi^3)(B^\ast(\rho+w))^2+i(\epsilon/2)(\gamma^2-\xi^2)(B^\ast\partial B^\ast)(\rho+w)}e^{-i\rho\gamma}\,d\gamma\\
&=-(\rho+i\epsilon^{1/3})\int_\xi^\infty E_{xp}(\gamma,\xi,\rho+w)e^{-i\rho\gamma}\,d\gamma.
\end{split}
\end{equation}
Define for $q\in\{m,n\}$,
\begin{equation}\label{pmt23}
I_{k}^{q}(t,v):=\frac{e^{-\nu k^2t}}{2\pi}\int_{\R^2} e^{ikwt}q_\epsilon(w,\rho;v-w)\Psi(\rho-w)\frac{F_{0k}(\rho+v-w)}{\rho+i\epsilon^{1/3}}\,d\rho dw.
\end{equation}
We decompose for $t\ge0, v\in\R$,
\begin{equation}\label{pmt24}
I_k^L(t,v)=I_k^m(t,v)+I_k^n(t,v),
\end{equation}
and set for $j\in\Z$,
\begin{equation}\label{pmt25}
I_{k,j}^m(t,v):=I_k^m(t,v)\Psi(v-j),\quad I_{k,j}^n(t,v):=I_k^n(t,v)\Psi(v-j).
\end{equation}
It suffices to prove \eqref{pmt10} for $h_k\in\{I_k^m,I_k^n\}$.

Taking Fourier transform in $v$ in \eqref{pmt23} and using the estimates \eqref{pmt22}, we obtain that for $t\ge0, \xi\in\R$ and $j\in\Z$,
\begin{equation}\label{pmt26}
\begin{split}
&e^{\nu k^2t}\big|\widehat{\,\,I_{k,j}^m}(t,\xi)\big|\lesssim\int_{\R^3}\big|\widehat{\,\,m_{\epsilon,j-\rho}}(w,\rho;\alpha)\big|\Psi(\rho-w)\frac{\big|\widehat{\,\,F_{0k}^j}(\xi-\alpha)\big|}{|\rho+i\epsilon^{1/3}|}\,d\rho dw d\alpha\\
&\lesssim \int_{\R^3}\epsilon^{-2/3}|\rho-w|\langle\epsilon^{-1/3}\rho\rangle^{-1/2}\Psi(\rho-w)E_{\epsilon,\delta_0}(\rho,w)e^{-\delta_0\langle\alpha\rangle^{1/2}}\big|\widehat{\,\,F_{0k}^j}(\xi-\alpha)\big| \,d\rho dw d\alpha.
\end{split}
\end{equation}
It follows from \eqref{pmt26} that for $t\ge0, \xi\in\R$ and $j\in\Z$, 
\begin{equation}\label{pmt28}
\big|\widehat{\,\,I_{k,j}^m}(t,\xi)\big|\lesssim e^{-\nu k^2t}\int_{\R^2}\langle\epsilon^{-1/3}\rho\rangle^{-3/2}e^{-\delta_0\langle\alpha\rangle^{1/2}}\big|\widehat{\,\,F_{0k}^j}(\xi-\alpha)\big| \,d\rho  d\alpha.
\end{equation}
The desired bounds follow from \eqref{pmt28} in the case of $I_k^m$. 

We finally turn to the term $I^n_k$, which (using \eqref{pmt22.2}) can be reformulated as
\begin{equation}\label{pmt29}
\begin{split}
&2\pi e^{\nu k^2t}I_{k,j}^n(t,v)=\int_{\R^4}\widehat{\,\,n_\epsilon}(\alpha,\rho;v-w)e^{ikwt+i\alpha w}\frac{\Psi(\rho-w)}{\rho+i\epsilon^{1/3}}\widehat{\,\,F_{0k}^j}(\beta)e^{i\beta(\rho+v-w)}\,d\rho dw d\alpha d\beta\\
&=-\int_{\R^4}\int_{\alpha}^{\infty}\Psi^{\ast}(v-j)E_{xp}(\gamma,\alpha,\rho+v-w)e^{-i\rho\gamma}e^{ikwt+i\alpha w}\Psi(\rho-w)\widehat{\,\,F_{0k}^j}(\beta)e^{i\beta(\rho+v-w)}\,d\Xi,
\end{split}
\end{equation}
where $d\Xi=d\gamma d\rho dw d\alpha d\beta$. Define for $j\in\Z$ and $\gamma,\alpha,w\in\R$,
\begin{equation}\label{pmt30}
E_{xp}^j(\gamma,\alpha,w):=E_{xp}(\gamma,\alpha,w)\Psi^{\ast\ast}(w-j).
\end{equation}
We have the bounds for $\gamma,\alpha,a\in\R$,
\begin{equation}\label{pmt31}
\Big|\int_\R E_{xp}^j(\gamma,\alpha,w)e^{-iwa}\,dw\Big|\lesssim e^{-\delta_0\epsilon(\gamma^3-\alpha^3)}e^{-\delta_0\langle a\rangle^{1/2}}.
\end{equation}
It follows from \eqref{pmt29} and \eqref{pmt31} that for $t\ge0, \xi\in\R$,
\begin{equation}\label{pmt32}
\big|\widehat{\,\,I_{k,j}^L}(t,\xi)\big|\lesssim \int_{\R^2} e^{-\delta_0\langle\xi-\eta-\beta,\eta\rangle^{1/2}}\big|\widehat{\,\,F_{0k}^j}(\beta)\big|\, d\beta d\eta.
\end{equation}
The desired bounds \eqref{pmt10} for $I_k^L$ follow from \eqref{pmt32}. 

The enhanced dissipation estimates \eqref{intM3} then follow from \eqref{intM4}, Proposition \ref{LAP22}, bound \eqref{LAPM2.1}, and Theorem \ref{DSR0}.

Finally we notice that \eqref{intM5} follows from the formula for $v\in\R, t\ge0$,
\begin{equation}\label{pmt35}
\Phi_k(t,v)=\int_\R \mathcal{G}_k(v,v') F_k(t,v')e^{-ikt(v'-v)}\,dv'
\end{equation}
and the bounds on the Green's function $\mathcal{G}_k(v,v')$, see Lemma \ref{pre200}. The proof of Theorem \ref{intM1} is now complete. 

\medskip

\begin{center}{\bf Acknowledgement}\end{center}

We are grateful to Alexandru Ionescu for invaluable discussions during the project. We also thank Bernard Helffer for comments which improved the presentation of the paper.

\end{document}